\numberwithin{equation}{section}
\newcolumntype{C}{>{$}c<{$}} 
\theoremstyle:=definition,remark,plain\do{%
    \expandafter\g@addto@macro\csname th@\theoremstyle\endcsname{%
      \addtolength\thm@preskip{.5\baselineskip plus .2\baselineskip minus .2\baselineskip}
      \addtolength\thm@postskip{.5\baselineskip plus .2\baselineskip minus .2\baselineskip}
    }%
  }
\newcommand{\pd}{\partial}     
\renewcommand{\ge}{\geqslant} 
\renewcommand{\cong}{\simeq} 
\newcommand{\cc}{\mathsf{c}}   
\newcommand{\dd}{\mathrm{d}}   
\newcommand{\ee}{\mathsf{e}}   
\newcommand{\kk}{\mathsf{k}}   
\newcommand{\wun}{\vvmathbb{1}}  
\DeclarePairedDelimiter{\brac}{\lparen}{\rparen}   
\DeclarePairedDelimiter{\sqbrac}{\lbrack}{\rbrack} 
\DeclarePairedDelimiter{\set}{\lbrace}{\rbrace}
\newcommand{\no}[1]{\mathopen{:} #1 \mathclose{:}} 
\DeclarePairedDelimiterX{\comm}[2]{\lbrack}{\rbrack}{#1 , #2}  
\DeclarePairedDelimiterX{\acomm}[2]{\lbrace}{\rbrace}{#1 , #2} 
\DeclarePairedDelimiterX{\inner}[2]{\langle}{\rangle}{#1 , #2} 
\DeclarePairedDelimiterX{\super}[2]{\lparen}{\rparen}{#1 \delimsize\vert \mathopen{} #2} 
\newcommand{\blank}{{-}}
\newcommand{\fld}[1]{\mathbb{#1}}    
\newcommand{\alg}[1]{\mathfrak{#1}}  
\newcommand{\grp}[1]{\mathsf{#1}}    
\newcommand{\VOA}[1]{\mathsf{#1}}    
\newcommand{\ZZ}{\fld{Z}}
\newcommand{\CC}{\fld{C}}
\newcommand{\SLG}[2]{\grp{#1}_{#2}}            
\newcommand{\SLA}[2]{\alg{#1}_{#2}}                      
\newcommand{\sltwo}{\SLA{sl}{2}}
\newcommand{\slthree}{\SLA{sl}{3}}
\newcommand{\slfour}{\SLA{sl}{4}}
\newcommand{\slnpone}{\SLA{sl}{n+1}}
\newcommand{\cartan}{\mathfrak{h}}
\newcommand{\cox}{\mathsf{h}}                                        
\newcommand{\dcox}{\coroot{\cox}}                                    
\newcommand{\roots}{\Delta}                                          
\newcommand{\proots}{\Delta_+}                                       
\newcommand{\sroots}{\Pi}                                            
\newcommand{\sroot}[1]{\alpha_{#1}}                                  
\newcommand{\tsroot}[1]{\tilde{\alpha}_{#1}}                         
\newcommand{\hroot}{\theta}                                          
\newcommand{\prvec}[1]{e_{#1}}                                        
\newcommand{\cvec}[1]{h_{#1}}                                        
\newcommand{\nrvec}[1]{f_{#1}}                                       
\newcommand{\fwt}[1]{\omega_{#1}}                                    
\newcommand{\coroot}[1]{#1^{\vee}}                                   
\newcommand{\nilorbit}[1]{\mathbbm{O}_{#1}}                                                 
\newcommand{\fmin}{f_\theta}                                        
\newcommand{\frec}{f_\boxplus}                                     
\newcommand{\fsub}{f_\textup{sub}}                                  
\newcommand{\fprin}{f_\textup{prin}}                                
\newcommand{\fhook}[1]{f^{(#1)}}                                  
\newcommand{\Wsymb}{\VOA{W}}
\newcommand{\uaffvoa}[2]{\VOA{V}^{#1}\brac{#2}}                   
\newcommand{\saffvoa}[2]{\VOA{L}_{#1}\brac{#2}}                   
\newcommand{\bgvoa}{\VOA{B}}                                      
\newcommand{\halflattice}{\Pi}                          		  
\newcommand{\uQHR}[3]{\Wsymb^{#1}\brac{#2,#3}}                    
\newcommand{\twist}{\textup{tw}}                         		  
\newcommand{\twzhu}[1]{\mathsf{Zhu}^{\twist}\sqbrac[\big]{#1}} 	  
\theoremstyle{plain}
\newtheorem{theorem}{Theorem}[section]
\newtheorem{corollary}[theorem]{Corollary}
\newtheorem{lemma}[theorem]{Lemma}
\newtheorem{proposition}[theorem]{Proposition}
\newtheorem{mthm}{Main Result}
\newtheorem{definition}[theorem]{Definition}
\Crefname{assumption}{Assumption}{Assumptions}
\DeclareRobustCommand{\SkipTocEntry}[5]{} 
\begin{document}

\title[Inverse reduction for hook-type W-algebras]{Inverse reduction for hook-type W-algebras}

\author[Z~Fehily]{Zachary Fehily}
\address[Zachary Fehily]{
School of Mathematics and Statistics \\
University of Melbourne \\
Parkville, Australia, 3010.
}
\email{fehilyz@unimelb.edu.au}

\begin{abstract}
Originating in the work of A.M. Semikhatov and D. Adamovi\'c, inverse reductions are embeddings involving W-algebras corresponding to the same Lie algebra but different nilpotent orbits. Here, we show that an inverse reduction embedding between the affine $\mathfrak{sl}_{n+1}$ vertex operator algebra and the minimal $\mathfrak{sl}_{n+1}$ W-algebra exists. This generalises the realisations for $n=1,2$ in \cite{AdaRea17,ACG21}. A similar argument is then used to show that inverse reduction embeddings exists between all hook-type $\mathfrak{sl}_{n+1}$ W-algebras, which includes the principal/regular, subregular, minimal $\mathfrak{sl}_{n+1}$ W-algebras, and the affine $\mathfrak{sl}_{n+1}$ vertex operator algebra. This generalises the regular-to-subregular inverse reduction of \cite{Feh21c}, and similarly uses free-field realisations and their associated screening operators.

\end{abstract}

\maketitle

\onehalfspacing

\section{Introduction}

\subsection{Background}
Given a simple Lie superalgebra $\mathfrak{g}$, a complex number $\kk$ and a nilpotent element $f \in \mathfrak{g}$, there exists a cochain complex of vertex operator algebras whose zeroth cohomology has the structure of a vertex operator algebra $\uQHR{\kk}{\mathfrak{g}}{f}$ called a \emph{W-algebra} \cite{KacQua03, KacQua04}. This process is called \emph{quantum hamiltonian reduction} of the universal affine vertex algebra $\uaffvoa{\kk}{\mathfrak{g}}$. 

 It is suspected one can also perform a \emph{partial} quantum hamiltonian reduction between two W-algebras originating from the same $\uaffvoa{\kk}{\mathfrak{g}}$ as long as their corresponding nilpotent orbits are related by a certain partial ordering $<$ \cite{Collingwood17} and possibly other conditions. Such partial reductions are in the same spirit as the `secondary quantum hamiltonian reductions' introduced by Madsen and Ragoucy \cite{RagSecond97}. In fact, a partial reduction for finite W-algebras has been defined for certain cases \cite{MorQua15, GenStage22}.

An \emph{inverse} to quantum hamiltonian reduction was first introduced by Semikhatov \cite{SemInv94} for the case of $\mathfrak{g}=\sltwo$. It takes the form of an embedding $\uaffvoa{\kk}{\sltwo} \hookrightarrow \uQHR{\kk}{\sltwo}{f} \otimes \halflattice$ where $\halflattice$ is a certain lattice-like vertex algebra. It was later shown by Adamovi\'c that this inverse reduction can be deployed to understand some of the representation theory of $\uaffvoa{\kk}{\sltwo}$ and, at certain levels, $\saffvoa{\kk}{\sltwo}$ \cite{AdaRea17}. An analogous embedding for $\slthree$ was studied in \cite{ACG21}, and an inverse to a partial reduction for $\slthree$ was studied in \cite{AdaRea20,Feh21b, AdaWeight23}. These inverse reductions have proven exceptionally useful in constructing interesting modules, as well as computing data important for conformal field theoretic applications.

One important task then is to construct inverse reductions for more general pairs of W-algebras, and unpack the relationships between the representation theories of the W-algebras involved. That is, for pairs of nilpotent elements $f_1, f_2 \in \mathfrak{g}$ whose nilpotent orbits satisfy $\nilorbit{f_1}<\nilorbit{f_2}$, construct and study embeddings of the form
\begin{equation}
    \uQHR{\kk}{\mathfrak{g}}{f_1} \hookrightarrow \uQHR{\kk}{\mathfrak{g}}{f_2} \otimes \VOA{V}.
\end{equation}

where $\VOA{V}$ is a tensor product of lattice and free-field vertex algebras, as well as analogous embeddings for the simple quotients of the W-algebras involved.

The main obstacles to constructing such embeddings for general $\mathfrak{g}$ and $f=f_1, f_2$ is that the operator product expansions of $\uQHR{\kk}{\mathfrak{g}}{f}$, if they are known, can be difficult to work with directly, and one has to come up with a suitable $\VOA{V}$. An alternative to brute forcing inverse reductions was described in \cite{Feh21c} for the $\mathfrak{g}=\slnpone$ principal-to-subregular inverse reduction, where one compares Wakimoto realisations of W-algebras and their corresponding screening operators from \cite{GenScreen20} to obtain the embedding.

The principal and subregular $\slnpone$ W-algebras, in addition to $\uaffvoa{\kk}{\slnpone}$ and the minimal $\slnpone$ W-algebra, are all examples of \emph{hook-type $\slnpone$ W-algebras} $\uQHR{\kk}{\slnpone}{\fhook{m}}$ for some $m=1,\dots,n+1$ (see \cref{def:hook}). Such W-algebras have attracted recent attention due to their appearance in Gaiotto and Rap\v c\' ak's Y-algebras from GL-twisted $\mathcal{N}=4$ supersymmetric gauge theories \cite{Gai19}. Consequent work has uncovered fascinating relationships amongst the vertex operator algebras appearing in Y-algebras, for example the triality isomorphisms of \cite{CreTri22}.

From the perspective of inverse reduction, hook-type nilpotent orbits satisfy $\nilorbit{f^{(m)}} > \nilorbit{f^{(m')}}$ when $m < m'$. Moreover, the `largest' and `smallest' $\slnpone$ W-algebras, with respect to the partial ordering induced by that on nilpotent orbits, are both hook-type. There are also Wakimoto realisations of $\uQHR{\kk}{\slnpone}{\fhook{m}}$ with particularly nice screening operators, see \eqref{eq:WakScreenersHook}.

The purpose of this paper is therefore to prove the existence of inverse reduction embeddings amongst hook-type $\slnpone$ W-algebras using a similar free-field approach as \cite{Feh21c}. That is embeddings, for $m<m'$, 
\begin{equation}
    \uQHR{\kk}{\slnpone}{\fhook{m'}} \hookrightarrow \uQHR{\kk}{\slnpone}{\fhook{m}} \otimes \VOA{V},
\end{equation}
for some vertex operator algebra $\VOA{V}$. This is useful as a proof-of-concept for the inverse reduction approach as a whole as it provides access a large number of nontrivial embeddings, in particular ones involving the affine $\uaffvoa{\kk}{\slnpone}$, that can be used in representation theoretic contexts. Secondly in light of their relationship to Y-algebras and the logarithmic conformal field-theoretic uses of the principal-to-subregular inverse reductions \cite{CreMod13,Feh21b,Feh21c}, inverse reductions for hook-type $\slnpone$ W-algebras may also assist in applications to physics.

\subsection{Outline}
We start in \cref{subsec:QHR} by describing the construction of W-algebras by quantum hamiltonian reduction of affine vertex operator algebras following \cite{KacQua03,KacQua04}. In \cref{subsec:Ordering}, two additional concepts related to quantum hamiltonian reduction are described: a partial version that is conjectured to exist between W-algebras, and an inverse. The construction of the latter is the topic of this paper.

The W-algebras of interest here, hook-type $\slnpone$ W-algebras, are described in \cref{subsec:Hook}. The full operator product expansions of hook-type W-algebras $\uQHR{\kk}{\slnpone}{\fhook{m}}$ where $m=1, \dots, n+1$ are not known in general but several properties of a set of strong generators can be established. For example the central charge \eqref{eq:hookcc} and some operator product expansions \eqref{eq:JPOPE}.

In order to relate hook-type $\slnpone$ W-algebras for different $m$ using inverse reduction, structural information in the form of free-field realisations is described in \cref{sec:WakWalg}. These free-field realisations are related to the Wakimoto realisation of the universal affine vertex operator algebra $\uaffvoa{\kk}{\slnpone}$ which is the topic of \cref{subsec:WakAff}. A crucial fact is that the image of the Wakimoto realisation of $\uaffvoa{\kk}{\slnpone}$ can be described in terms of screening operators as per \eqref{eq:WakAffScreen} \cite{Fre05Opers}. The Wakimoto realisation of the hook-type $\uQHR{\kk}{\slnpone}{\fhook{m}}$ is detailed in \cref{subsec:WakWalg} and also admits a screening operator description \eqref{eq:WakHookScreen} \cite{GenScreen20}.

In \cref{sec:Inverse}, these free-field realisations are used to show the existence of inverse reduction embeddings between hook-type $\slnpone$ W-algebras for generic $\kk$ (in the sense of \cite[Def.~4.5]{Gen17}). The first such embedding is obtained in \cref{subsec:MintoAff} and relates the affine $\uaffvoa{\kk}{\slnpone}$ ($m=(n+1)$ hook-type) to the minimal $\slnpone$ W-algebra $\uQHR{\kk}{\slnpone}{\fmin}$ ($m=n$ hook-type):

\begin{mthm}[\cref{thm:mintoaff}]
Let $\kk$ be generic, $\Pi$ be the half-lattice vertex algebra and $\bgvoa$ the bosonic ghost vertex algebra. There exists an embedding 
\begin{equation}
    \uaffvoa{\kk}{\slnpone} \hookrightarrow \uQHR{\kk}{\slnpone}{\fmin} \otimes \Pi \otimes \bgvoa^{\otimes(n-1)}
\end{equation}
whose image is specified by
\begin{equation}
    \uaffvoa{\kk}{\slnpone} \cong \textup{\normalfont ker} \int  \ee^{A^1+A^2}(z) \ \dd z,
\end{equation}
where $\ee^{A^1}(z)$ and $\ee^{A^2}(z)$ are screening fields for $\uQHR{\kk}{\slnpone}{\fmin}$ and $\Pi \otimes \bgvoa^{\otimes(n-1)}$ respectively.
\end{mthm}

This result is generalised in \cref{subsec:HooktoHook}. Here, a similar argument is used to describe an embedding relating the hook-type $\slnpone$ W-algebras $\uQHR{\kk}{\slnpone}{\fhook{m}}$ and $\uQHR{\kk}{\slnpone}{\fhook{m-1}}$.

\begin{mthm}[\cref{thm:HookToHook}]
Let $\kk$ be generic. There exists an embedding 
\begin{equation} \label{mthm:hooktohook}
    \uQHR{\kk}{\slnpone}{\fhook{m}} \hookrightarrow \uQHR{\kk}{\slnpone}{\fhook{m-1}} \otimes \Pi \otimes \bgvoa^{\otimes(m-2)}
\end{equation}
whose image is specified by
\begin{equation}
    \uaffvoa{\kk}{\slnpone} \cong \textup{\normalfont ker} \int  \ee^{A^1_m+A^2_m}(z) \ \dd z,
\end{equation}
where $\ee^{A^1_m}(z)$ and $\ee^{A^2_m}(z)$ are screening fields for $\uQHR{\kk}{\slnpone}{\fhook{m-1}}$ and $\Pi \otimes \bgvoa^{\otimes(m-2)}$ respectively.
\end{mthm}

To upgrade generic $\kk$ to nongeneric $\kk$, in \cref{subsec:Nongeneric} we apply the arguments of the previous sections to the hook-type $\slnpone$ W-algebra $\uQHR{U}{\slnpone}{\fhook{m}}$ over the ring $U=\CC[k]$. This culminates in \cref{thm:HookToHookNogen} which is the existence of the embedding \eqref{mthm:hooktohook} for noncritical $\kk$. This generalises the inverse reductions of \cite{AdaRea17} ($n=1,m=2$), \cite{AdaRea20} ($n=2, m=2$), \cite{Feh21c} ($m=2$) and \cite{ACG21} ($n=2, m=3$). 

Importantly, all inverse reductions obtained in this paper are for universal W-algebras. The question of when \eqref{mthm:hooktohook} descends to an embedding of simple quotients for general $n$ and $m$, and the subsequent representation theoretic consequences, are to be addressed in future work.

The main body of this paper concludes with outlining some future directions and outstanding questions in \cref{sec:Outlook}.

An upshot of the free-field approach used here is that, subject to knowing explicit formulae for the Wakimoto realisations of the W-algebras involved, the inverse reduction embedding can be written down. \cref{app:ExpExp} shows how this works for the embedding $\uaffvoa{\kk}{\slfour} \hookrightarrow \uQHR{\kk}{\slfour}{\fmin} \otimes \Pi \otimes \bgvoa^{\otimes(2)}$ which is described explicitly in \eqref{eq:sl4MintoAff}. 

\addtocontents{toc}{\SkipTocEntry}
\subsection*{Acknowledgements}
The author would like to thank David Ridout, Naoki Genra, Christopher Raymond, Drazen Adamovi\'c and Andrew Linshaw for their helpful comments, advice and suggestions. This work is supported by the Australian Research Council Discovery Project DP210101502.


\section{The vertex operator algebra \texorpdfstring{$\uQHR{\kk}{\slnpone}{\fhook{m}}$}{Wk(sln+1,fm)}} \label{sec:Walg}

\subsection{Quantum hamiltonian reduction} \label{subsec:QHR}
Originating in the work of Zamolodchikov and others \cite{ZamInf85,KPZ88,PolGau90,BerCon91}, and later generalised by various groups \cite{FF90,dBT94,KacQua03,KacQua04}, quantum hamiltonian reduction is a homological procedure that produces new vertex operator algebras from affine ones. The vertex operator algebras that result from quantum hamiltonian reduction are known as \emph{W-algebras}. Here we follow the general construction of W-algebras described in \cite[Sec.~1]{KacQua04}.

Let $\mathfrak{g}$ be a simple finite-dimensional Lie algebra, $\kk \in \CC$ and $x,f \in \mathfrak{g}$ satisfying
\begin{enumerate}
    \item The adjoint action $\textup{ad }x (\blank) = [x, \blank]$ of $x$ on $\mathfrak{g}$ is diagonalisable with half-integer eigenvalues:
    \begin{equation}
        \mathfrak{g} = \bigoplus_{m \in \frac{1}{2}\ZZ} \mathfrak{g}_m,
    \end{equation}
    where $\mathfrak{g}_m = \set{g \in \mathfrak{g} \ | \ [x,g] = m g}$.
    \item $f \in \mathfrak{g}_{-1}$.
    \item The adjoint action $\textup{ad }f$ of $f$ restricted to $\mathfrak{g}_{1/2}$ defines a vector space isomorphism $\mathfrak{g}_{1/2} \cong \mathfrak{g}_{-1/2}$.
\end{enumerate}
Alternatively, one can start with a nilpotent element $f \in \mathfrak{g}$ and a semisimple $x \in \mathfrak{g}$ such that the grading of $\mathfrak{g}$ by the adjoint action of $x$ is \emph{good grading} for $f$. That is, $f \in \mathfrak{g}_{-1}$ and $\text{ad}_f:\mathfrak{g}_j \rightarrow \mathfrak{g}_{j-1}$ is injective for $j \geq \frac{1}{2}$ and surjective for $j \leq  \frac{1}{2}$. These last two conditions imply condition (c) by setting $j=\frac{1}{2}$. Good $\frac{1}{2}\ZZ$-gradings for simple Lie algebras were classified in \cite{ElasClass05}. For example, any semisimple $x \in \mathfrak{g}$ that can be completed to an $\sltwo$ triple $\set{x=\frac{h}{2},e,f}$ in $\mathfrak{g}$ defines a good grading for that $f$ by $\sltwo$ representation theory. 

Let $A_{\textup{ne}} = \mathfrak{g}_{1/2}$. A symplectic form $(\blank | \blank)_{\textup{ne}}$ on $A_{\textup{ne}}$ is given by,
\begin{equation}
    (\varphi| \psi)_{\textup{ne}} = \inner{f}{[\varphi,\psi]}, \quad \varphi,\psi \in A_{\textup{ne}},
\end{equation}
where $\inner{\blank}{\blank}$ is the normalisation of the Killing form $\kappa_{\mathfrak{g}}$ of $\mathfrak{g}$ given by
\begin{equation}
   \inner{a}{b}=\frac{1}{2\dcox}\kappa_{\mathfrak{g}}(a,b).
\end{equation}
To see that this symplectic form is nondegenerate, the Killing form is invariant so $(\varphi| \psi)_{\textup{ne}} = \inner{[f,\varphi]}{\psi}$. As $\textup{ad }f:\mathfrak{g}_{1/2} \rightarrow \mathfrak{g}_{-1/2}$ is a vector space isomorphism, $(\blank | \blank)_{\textup{ne}}$ defines a nondegenerate pairing between $\mathfrak{g}_{1/2}$ and $\mathfrak{g}_{-1/2}$. 

We can therefore construct the bosonic ghost vertex operator algebra $\bgvoa(A_{\textup{ne}})$ whose generating (even) fields are denoted by $\set{\delta(z)}_{\delta \in A_{\textup{ne}}}$ with operator product expansions, for $\delta, \delta' \in A_{\textup{ne}}$,
\begin{equation}
    \delta(z) \delta'(w) \sim \frac{(\delta | \delta' )_{\textup{ne}}\wun(w)}{z-w}.
\end{equation}

Similarly, let $A_+ = \oplus_{m> 0} \ \mathfrak{g}_m$, $A_- = (A_+)^*$ and $A_{\textup{ch}} = A_+ \oplus A_-$. Define a nondegenerate symmetric form $(\blank | \blank)_{\textup{ch}}$ on $A_{\textup{ch}}$ by
\begin{equation}
    (A_+ | A_+)_{\textup{ch}} = (A_- | A_-)_{\textup{ch}} = 0, \quad (\delta | \xi)_{\textup{ch}} =  (\xi | \delta)_{\textup{ch}} = \xi(\delta), \quad 
    \delta \in A_+, \xi \in A_-.
\end{equation}
We can therefore construct the fermionic ghost vertex operator superalgebra $\VOA{F}(A_{\textup{ch}})$ whose generating (odd) fields are denoted by$\set{\varphi(z)}_{\varphi \in A_{\textup{ch}}}$ with operator product expansions, for $\varphi, \varphi' \in A_{\textup{ch}}$,
\begin{equation}
    \varphi(z) \varphi'(w) \sim \frac{(\varphi | \varphi' )_{\textup{ch}}\wun(w)}{z-w}.
\end{equation}

Let $\VOA{G} = \VOA{F}(A_{\textup{ch}}) \otimes \bgvoa(A_{\textup{ne}})$ and $\VOA{C} = \uaffvoa{\kk}{\mathfrak{g}}\otimes \VOA{G}$ where $\uaffvoa{\kk}{\mathfrak{g}}$ is the level-$\kk$ universal affine vertex operator algebra for $\mathfrak{g}$. The charge decomposition of $\VOA{F}(A_{\textup{ch}})$ that assigns charge $\pm 1$ to fields in $A_{\pm}$ respectively gives rise to a charge decomposition
\begin{equation}
    \VOA{C} = \bigoplus_{m \in \ZZ} \VOA{C}_m
\end{equation}
by setting the charge of all fields in $\bgvoa(A_{\textup{ne}})$ and $\uaffvoa{\kk}{\mathfrak{g}}$ to be zero. To construct a differential on the graded vertex algebra $\VOA{C}$, fix a basis $\set{J^a}_{a \in S_j}$ for each $\mathfrak{g}_j$. Let $S = \sqcup_j S_j$ (so that $\set{J^a}_{a \in S}$ is a basis of $\mathfrak{g}$), $S_+ = \sqcup_{j>0} S_j$ (so that $\set{J^a}_{a \in S_+}$ is a basis of $A_+$) and $C^{a,b}_c \in \CC$ be the structure constants of $\mathfrak{g}$ defined by
\begin{equation}
    [J^a,J^b] = \sum_{c \in S} C^{a,b}_c J^c.
\end{equation}
With respect to this basis, strong generators for $\uaffvoa{\kk}{\mathfrak{g}}$ are given by fields $\set{J^a(z)}_{a \in S}$, and their operator product expansions are, for $a,b \in S$,
\begin{equation} \label{eq:affOPE}
    J^{a}(z) J^{b}(w) \sim \frac{\kk \inner{ J^{a}}{ J^{b}} \wun(w)}{(z-w)^2} + \frac{\sum_{c \in S} C^{a,b}_c J^c(w)}{z-w}. 
\end{equation}

Denote the corresponding basis of $A_{\textup{ne}}$ by $\set{\delta^a}_{a \in S_{1/2}}$ and that of $A_+$ by $\set{\varphi^a}_{a \in S_+}$. Finally let $\set{\psi^a}_{a \in S_+}$ be the basis of $A_-$ dual to $\set{\varphi^a}_{a \in S_+}$ with respect to $(\blank | \blank)_{\textup{ch}}$. That is,
\begin{equation}
    (\varphi^a| \psi^b)_{\textup{ch}} = \psi^b(\varphi^a) = \delta_{a,b}.
\end{equation}
In terms of these generating fields, the charge decomposition of $\VOA{C}$ is defined by giving the fields $\set{J^a(z)}_{a \in S}$ and $\set{\delta^a(z)}_{a \in S_{1/2}}$ charge zero, $\set{\varphi^a(z)}_{a \in S_+}$ charge 1 and $\set{\psi^a(z)}_{a \in S_+}$ charge -1.

Define an odd field $d(z) \in \VOA{C}$ of charge equal to -1 by
\begin{align} \label{eq:differential}
    d(z) =& \sum_{a \in S_+} J^a(z) \psi^a(z) -\frac{1}{2}\sum_{a,b,c \in S_+} C^{a,b}_c \no{\varphi^c(z) \psi^a(z) \psi^b(z)}\\
          &\hspace{80pt}+ \sum_{a \in S_+} \inner{f}{J^a}\psi^a(z) + \sum_{a \in S_{1/2}}\psi^a(z) \delta^a(z), \notag
\end{align}
where we have omitted tensor product symbols. By \cite[Thm.~2.1]{KacQua03}, $d(z)d(w) \sim 0$. A simple consequence of this is that the zero mode $d=d_0: \VOA{C}_m \rightarrow \VOA{C}_{m-1}$ is a differential, i.e. $d^2 = 0$. 

The homology of the chain complex $(\mathsf{C},d)$ graded by charge, denoted by 
\begin{equation}
    \uQHR{\kk}{\mathfrak{g}}{x,f} =  H_{x,f}\left(\uaffvoa{\kk}{\mathfrak{g}} \right),
\end{equation}
is called a \emph{quantum hamiltonian reduction} of $\uaffvoa{\kk}{\mathfrak{g}}$, or a \emph{W-algebra} for short. The vertex algebra structure on $\uQHR{\kk}{\mathfrak{g}}{x,f}$ is inherited from that of $\VOA{C}$. As shown by Kac and Wakimoto \cite[Thm.~4.1]{KacQua03}, the homology is concentrated on the zeroth degree component,
\begin{equation}
    \uQHR{\kk}{\mathfrak{g}}{x,f} =  H_{x,f}^0 \left(\uaffvoa{\kk}{\mathfrak{g}} \right).
\end{equation}

To make $\uQHR{\kk}{\mathfrak{g}}{x,f}$ a vertex operator algebra, we require an energy-momentum field $L(z) \in \uQHR{\kk}{\mathfrak{g}}{x,f}$. This can be achieved by defining a field $L(z) \in \VOA{C}$ such that $d(z)$ is a primary field with conformal dimension $1$. With this goal in mind, let $\kk \neq - \dcox$, $\set{\xi^a}_{a \in S_{1/2}}$ be the dual basis to $\set{\delta^a}_{a \in S_{1/2}}$ with respect to $(\blank | \blank)_{\textup{ne}}$ and $\set{m_a}_{a \in S} \subset \CC$ be the set defined by $[x,J^a] = m_a J^a$. Define the field $L(z)$ by
\begin{align} \label{eq:Walgemfield}
    L(z) =&\ T^{\text{Sug.}}(z) + \pd x(z)
    - \sum_{a \in S_+} m_a \no{\psi^a(z) \pd \varphi^a(z)} \\&+\sum_{a \in S_+} (1-m_a) \no{\pd \psi^a(z) \varphi^a(z)}
    + \frac{1}{2}\sum_{a \in S_{1/2}} \no{\pd \delta(z)\xi^a(z)}. \notag
\end{align}
Kac, Roan and Wakimoto showed that $L(z)$ is an energy-momentum field in $\VOA{C}$ with central charge $\cc$ given by \cite[Thm.~2.2(a)]{KacQua03}
\begin{equation}
    \cc = \frac{\kk \textup{dim }\mathfrak{g}}{\kk+\dcox} - 12 \kk \inner{x}{x} - \sum_{a \in S_+} (12 m_a^2 - 12m_a + 2) - \frac{1}{2}\textup{dim }\mathfrak{g}_{1/2}
\end{equation}
and showed that $d(z)$ was primary of conformal dimension $1$ \cite[Thm.~2.3]{KacQua03}. The non-zero image of $L(z)$ in $\uQHR{\kk}{\mathfrak{g}}{x,f}$, which we also denote by $L(z)$, gives $\uQHR{\kk}{\mathfrak{g}}{x,f}$ the structure of a vertex operator algebra.

The most important examples of elements $x,f \in \mathfrak{g}$ satisfying the required conditions for quantum hamiltonian reduction are those arising from $\sltwo$ triples. In such cases, the nilpotent element $f \in \mathfrak{g}$ determines $x \in \mathfrak{g}$ up to conjugation by the Jacobson-Morozov theorem. For $x,f \in \mathfrak{g}$ belonging to $\sltwo$ triples, we will therefore denote the corresponding W-algebra by $\uQHR{\kk}{\mathfrak{g}}{f} =  H_{f}\left(\uaffvoa{\kk}{\mathfrak{g}} \right)$.

Alternatively, any element $x \in \mathfrak{g}$ whose adjoint action grades $\mathfrak{g}$ with half-integer eigenvalues and is good for some $f$ actually determines $f \in \mathfrak{g}$ up the action of the adjoint group of $\mathfrak{g}$ \cite[Rem.~1.1]{KacQua04}. The orbits of nilpotent elements in $\mathfrak{g}$ under the action of the adjoint group of $\mathfrak{g}$ are known as the \emph{nilpotent orbits} of $\mathfrak{g}$. Therefore the W-algebra $\uQHR{\kk}{\mathfrak{g}}{f}$ is determined, up to isomorphism, by the nilpotent orbit of $\mathfrak{g}$ containing $f$.

\subsection{Partial and inverse reduction} \label{subsec:Ordering}
There is a well known partial ordering on the nilpotent orbits of a given $\mathfrak{g}$ known as the \emph{closure} or \emph{Chevalley ordering} \cite{Collingwood17}. The Chevalley ordering of nilpotent orbits in $\mathfrak{g}$ therefore induces a partial ordering on the isomorphism classes of W-algebras obtained from $\uaffvoa{\kk}{\mathfrak{g}}$. 

It is suspected that, in addition to the usual quantum hamiltonian reduction, one can also perform a `partial quantum hamiltonian reduction' between two W-algebras as long as they are related by this partial ordering. Strong supporting evidence for this can be found in the twisted Zhu algebra of W-algebras: the associative algebra $\twzhu{\uQHR{\kk}{\mathfrak{g}}{f}}$ is isomorphic to the finite W-algebra corresponding to the same $\mathfrak{g}$ and $f$ \cite{DeSFin06}. 

Partial reductions of finite W-algebras corresponding to $\mathfrak{g} = \slnpone$ were conjectured to exist by Morgan \cite{MorQua15} and shown to exist  in \cite{GenStage22} when the corresponding nilpotent orbits $\mathbb{O}_{f_i}$ satisfy $\mathbb{O}_{f_1} < \mathbb{O}_{f_2}$ in the Chevalley ordering in addition to a number of sufficient conditions. While early work on partial quantum hamiltonian reductions exists in physics literature \cite{RagSecond97}, the vertex algebraic content of such a construction is currently being investigated.

An `inverse' to quantum hamiltonian reduction was first introduced by Semikhatov \cite{SemInv94} for the case of $\mathfrak{g}=\sltwo$. It takes the form of an embedding 
\begin{equation} \label{eq:printosubsl2}
    \uaffvoa{\kk}{\sltwo} \hookrightarrow \uQHR{\kk}{\sltwo}{f} \otimes \halflattice,
\end{equation}
where $\halflattice$ is a lattice vertex algebra defined in \cite{BerRep02}. It was later shown by Adamovi\'c that this inverse reduction can be deployed to understand some of the representation theory of $\uaffvoa{\kk}{\sltwo}$ and, at certain levels, $\saffvoa{\kk}{\sltwo}$ \cite{AdaRea17}. Similarly, inverse reduction for the minimal $\slthree$ W-algebra is known \cite{ACG21} and given by an embedding
\begin{equation}
    \uaffvoa{\kk}{\slthree} \hookrightarrow \uQHR{\kk}{\slthree}{f_\theta} \otimes \halflattice \otimes \bgvoa,
\end{equation}
where $\bgvoa$ is the $\beta \gamma$ ghost vertex algebra. 

It is expected that inverse quantum hamiltonian reductions can be constructed for more general pairs of W-algebras, where what is being inverted is a partial quantum hamiltonian reduction. Similar to the affine case, such a construction will provide nontrivial relationships between the representation theories of the W-algebras involved. For example, in the $\slthree$ case, there is an inverse reduction embedding \cite{AdaRea20}
\begin{equation} \label{eq:printosubsl3}
    \uQHR{\kk}{\slthree}{f_\theta} \hookrightarrow \uQHR{\kk}{\slthree}{f_1+f_2} \otimes \halflattice.
\end{equation}

This inverse reduction was used in \cite{Feh21b} to determine modular transformations and conjecture Grothendiek fusion rules for Bershadsky-Polyakov minimal models in terms of such data for $\VOA{W}_3$ minimal models. The same inverse reduction also facilitates a classification of irreducible weight modules (with finite-dimensional weight spaces) for $\uQHR{\kk}{\slthree}{f_\theta}$ and, for certain $\kk$, its simple quotient \cite{AdaWeight23}.  

The inverse reductions \eqref{eq:printosubsl2} and \eqref{eq:printosubsl3} were generalised to $\slnpone$ in \cite{Feh21c}. The W-algebras involved therein are the subregular W-algebra $\uQHR{\kk}{\slnpone}{\fsub}$, also known as the Feigen-Semikhatov algebra $\mathcal{W}^{(2)}_{n+1}$ \cite{FS04}, and the principal W-algebra $\uQHR{\kk}{\slnpone}{\fprin}$. The inverse reduction is an embedding 
\begin{equation}
    \uQHR{\kk}{\slnpone}{\fsub} \hookrightarrow \uQHR{\kk}{\slnpone}{\fprin} \otimes \halflattice,
\end{equation}
and this embedding was used to study the representation theory of the subregular W-algebra. A particularly interesting feature of this inverse reduction is that it descends to an embedding of simple quotients when $\kk$ is nondegenerate admissible \cite[Thm.~5.1]{Feh21c}. At such $\kk$, the simple quotient of $\uQHR{\kk}{\slnpone}{\fprin}$ is rational \cite{AraRat15}.

Recalling that the (conjectured) necessary condition for the existence of a partial quantum hamiltonian reduction from $\uQHR{\kk}{\mathfrak{g}}{f_1}$ to $\uQHR{\kk}{\mathfrak{g}}{f_2}$ is that $\mathbb{O}_{f_1} < \mathbb{O}_{f_2}$ , we believe that there exists embeddings of the form
\begin{equation} \label{eq:invembGeneral}
    \uQHR{\kk}{\mathfrak{g}}{f_1} \hookrightarrow \uQHR{\kk}{\mathfrak{g}}{f_2} \otimes \VOA{V},
\end{equation}
where $\VOA{V}$ is a vertex operator algebra representing the degrees of freedom `lost' in performing the partial quantum hamiltonian reduction from $ \uQHR{\kk}{\mathfrak{g}}{f_1}$ to $ \uQHR{\kk}{\mathfrak{g}}{f_2}$. In all known examples, $\VOA{V}$ consists of free-field vertex algebras and (half-)lattice vertex algebras.

\subsection{Hook-type \texorpdfstring{$\slnpone$}{sln+1} W-algebras} \label{subsec:Hook}
We now define the W-algebras of interest in this paper. Let $\mathfrak{g}=\slnpone$. Nilpotent orbits of $\slnpone$ are indexed by partitions of $n+1$, so we will denote nilpotent orbits by $\nilorbit{\lambda}$ where $\lambda=(\lambda_1, \lambda_2, \dots)$ is a partition of $n+1$ . The partial ordering on nilpotent orbits of $\slnpone$ is given by the dominance ordering on the corresponding partition \cite{Gers61}. That is, $\nilorbit{\lambda} \leq \nilorbit{\lambda'}$ if and only if $\lambda \leq \lambda'$.

The structure of this partial ordering is complicated in general but there are generic features that are noteworthy. For example, the largest nilpotent orbit is always $\nilorbit{\textup{prin}} = \nilorbit{(n+1)}$ (the \emph{principal} nilpotent orbit), while the smallest is $\nilorbit{(1^{n+1})} = \set{\mathbf{0}}$. The \emph{subregular} nilpotent orbit $\nilorbit{\textup{sub}}  = \nilorbit{(n,1)}$ is always smaller than $\nilorbit{\textup{reg}}$ and larger than all other nilpotent orbits. Similarly, the \emph{minimal} nilpotent orbit $\nilorbit{\textup{min}} = \nilorbit{(2,1^{n-1})}$ is always larger than $\nilorbit{(1^{n+1})}$ but smaller than all other nilpotent orbits. 

\begin{figure}[ht]
\centering
	\begin{tikzpicture}[xscale=2.5,yscale=2]
		\draw[->] 
		(1.5,0.6)  node[above] {$\uQHR{\kk}{\SLA{sl}{6}}{0} \cong \uaffvoa{\kk}{\SLA{sl}{6}}$} --
		(1.5,0.2);
		
		\draw[->,dashed] 
		(1.5,-0.2)  node[above] {$\uQHR{\kk}{\SLA{sl}{6}}{\fmin}$} --
		(1.5,-0.6) node[below] {$\uQHR{\kk}{\SLA{sl}{6}}{f_{(2,2,1,1)}}$};
		
		\draw[->,dashed] (2.1,-0.8) to [bend left=20] (3,-1.5) node[below]{$\uQHR{\kk}{\SLA{sl}{6}}{f_{(2,2,2)}}$};
		\draw[->,dashed] (0.9,-0.8) to [bend right=20] (0,-1.5)
		node[below]{$\uQHR{\kk}{\SLA{sl}{6}}{f_{(3,1,1,1)}}$};
		
		\draw[->,dashed] (3,-1.9) to [bend left=20] (2.1,-2.5);
		\draw[->,dashed] (0,-1.9) to [bend right=20] (0.9,-2.5);
		
		\draw[->,dashed] (2.1,-2.6) to [bend left=20] (3,-3.2) node[below]{$\uQHR{\kk}{\SLA{sl}{6}}{f_{(3,3)}}$};
		\draw[->,dashed] (0.9,-2.6) to [bend right=20] (0,-3.2)
		node[below]{$\uQHR{\kk}{\SLA{sl}{6}}{f_{(4,1,1)}}$};
		
		\draw (1.5,-2.35) node[below]{$\uQHR{\kk}{\SLA{sl}{6}}{f_{(3,2,1)}}$};
		
		\draw[->,dashed] (3,-3.6) to [bend left=20] (2.1,-4.2);
		\draw[->,dashed] (0,-3.6) to [bend right=20] (0.9,-4.2);
		
		\draw (1.5,-4.05) node[below]{$\uQHR{\kk}{\SLA{sl}{6}}{f_{(4,2)}}$};
		
		\draw[->, dashed] 
		(1.5,-4.45) --
		(1.5,-4.85) node[below] {$\uQHR{\kk}{\SLA{sl}{6}}{\fsub}$};
		
	    \draw[->, dashed] 
		(1.5,-5.25) --
		(1.5,-5.65) node[below] {$\uQHR{\kk}{\SLA{sl}{6}}{\fprin}$};
	\end{tikzpicture}
\caption{The partial ordering of W-algebras for $\SLA{sl}{6}$. Here we choose an element $f_\lambda$ from each nilpotent orbit $\nilorbit{\lambda}$. The nilpotent orbits in $\SLA{sl}{6}$ associated to the W-algebras increase in size from top to bottom with respect to the partial ordering. The W-algebras appearing at the same height (e.g. $\uQHR{\kk}{\SLA{sl}{6}}{f_{(4,1,1)}}$ and $\uQHR{\kk}{\SLA{sl}{6}}{f_{(3,3)}}$) are not related by the partial ordering however.}
\end{figure}
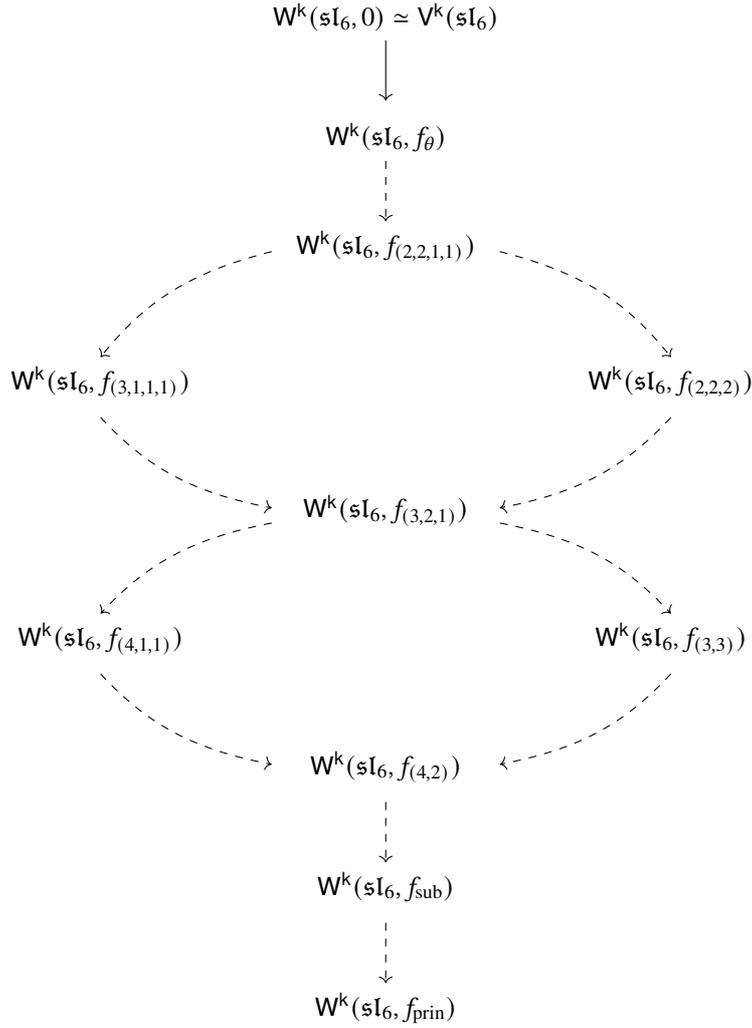

All of these distinguished orbits correspond to partitions of $(n+1)$ whose Young diagrams consist of a single hook; these are partitions of the form $(n-m+2,1^{m-1})$ for some $m = 1, \dots,  n+1$. We call such a nilpotent element \emph{hook-type}. Representatives for the hook-type nilpotent orbits are given by
\begin{equation} \label{eq:hooknilpotent}
    \fhook{n+1} = 0, \qquad \fhook{m} = \sum_{i=m}^{n} M_{i+1,i}
\end{equation}
for $m=1,\dots,n$, where $M_{i,j}$ is the $(i,j)$ elementary matrix. Of course all conjugates of the above matrices are representatives of the same nilpotent orbit. 

\begin{definition} \label{def:hook}
Let $\kk \neq -\dcox$ and $m = 1, \dots, n+1$. The  (universal) $m$'th \emph{hook-type $\slnpone$ W-algebra} is the W-algebra $\uQHR{\kk}{\slnpone}{\fhook{m}}$.
\end{definition}

Following the description in \cite{CreTri22}, for $m\geq 2$, $\uQHR{\kk}{\slnpone}{\fhook{m}}$ has an affine vertex subalgebra isomorphic to
\begin{equation}
    \begin{cases}
        \VOA{H}, & m=2,\\
        \uaffvoa{\kk+n+1-m}{\mathfrak{gl}_{m-1}} = \VOA{H} \otimes \uaffvoa{\kk+n+1-m}{\mathfrak{sl}_{m-1}}, & m>2,
    \end{cases}
\end{equation}
where $\VOA{H}$ is a rank one Heisenberg vertex algebra. The hook-type W-algebra $\uQHR{\kk}{\slnpone}{\fhook{m}}$ is strongly generated by the fields from this subalgebra, along with fields $L(z), X^3(z),\dots,X^{n-m+2}(z)$ of conformal dimension $2,3, \dots, n-m+2$ respectively and $2(m-1)$ fields $P^{\pm, 1}(z), \dots, P^{\pm, m-1}(z)$ all of conformal dimension $\frac{n-m+3}{2}$. Here, $L(z)$ is a conformal field with central charge
\begin{align} \label{eq:hookcc}
    \cc^{(n-m+2,1^{m-1})} =&\ \frac{\kk n (n+2)}{\kk+n+1}-\kk (-m+n+2) \left((-m+n+2)^2-1\right)\\
        &+(m-n-1) \left(m^2 (n+2)-2 m \left(n^2+3 n+3\right)+n^3+4 n^2+5 n+3\right). \notag
\end{align}

General formulae for the full operator product algebra $\uQHR{\kk}{\slnpone}{\fhook{m}}$ are not known. However quantum hamiltonian reduction does provide us with some information \cite{KacQua04}. For example, the fields $L(z), X^3(z),\dots,X^{n-m+2}(z)$ are $\mathfrak{gl}_{m-1}$-invariant, while the fields $P^{+, 1}(z), \dots, P^{+, m-1}(z)$ transform as $\CC^{m-1}$ (the standard $\mathfrak{gl}_{m-1}$-module) and the fields $P^{-, 1}(z), \dots, P^{-, m-1}(z)$ transform as the dual of $\CC^{m-1}$. Moreover, by \cite[Lem.~3.4]{CreTri22}, the generator $J(z)$ of the Heisenberg vertex algebra $\VOA{H}$ can be normalised so that 
\begin{equation} \label{eq:JPOPE}
    J(z) P^{\pm, i}(w) \sim \frac{\pm  P^{\pm, i}(w)}{z-w}, \qquad J(z) J(w) \sim \frac{-(m-1) (1 + n - (\kk+n+1) (n- m +2)) \wun(w)}{(n + 1)(z-w)^2}.
\end{equation}

Outside of specific choices of $n$ and $m$, much about $\uQHR{\kk}{\slnpone}{\fhook{m}}$ is still a mystery. However a quick computation shows that $(n-m+2,1^{m-1}) \ge (n-m'+2,1^{m'-1})$ when $m<m'$. So we expect that there is an inverse reduction
\begin{equation} \label{eq:invembHook}
    \uQHR{\kk}{\slnpone}{\fhook{m'}} \hookrightarrow \uQHR{\kk}{\slnpone}{\fhook{m}} \otimes \VOA{V}
\end{equation}
for some vertex operator algebra $\VOA{V}$. Indeed there is a partial reduction amongst finite W-algebras corresponding to hook-type nilpotents in $\slnpone$ \cite[Prop.~4.1.1]{GenStage22}. Additionally, hook-type $\slnpone$ W-algebras form a path in the partial ordering of W-algebras including both extremes: the affine vertex operator algebra $\uaffvoa{\kk}{\slnpone}$ and the principal W-algebra $\uQHR{\kk}{\slnpone}{\fprin}$. 

Understanding inverse reduction for hook-type $\slnpone$ W-algebras is therefore an ideal test case for the inverse reduction approach as a whole. The existence of a path of this kind also has the potential to greatly assist in studying the representation theory of all W-algebras along it and their simple quotients, including $\uaffvoa{\kk}{\slnpone}$ and $\saffvoa{\kk}{\slnpone}$ (the latter likely at certain levels $\kk$).

\section{Wakimoto realisation of \texorpdfstring{$\uQHR{\kk}{\slnpone}{\fhook{m}}$}{Wk(sln+1,fm)}} \label{sec:WakWalg}

Inverse reduction embeddings for small $n$ and $m$ can be obtained by fairly direct methods; In these cases, the operator product expansions are known on both sides and are straightforward to work with. The desired embedding can be obtained by making some reasonable assumptions, imposing the relevant operator product expansions and checking for injectivity. 

For larger $n$ and $m$, the complexity of the operator product expansions makes this approach exceedingly difficult. Therefore to prove the existence of this embedding in general, we need some more information about $\uQHR{\kk}{\slnpone}{\fhook{m}}$. Following the approach described in \cite{Feh21c} for the inverse reduction between the principal ($m=1$ hook-type) $\slnpone$ W-algebra to the subregular ($m=2$ hook-type) W-algebra, we are led to consider free-field realisations of $\uQHR{\kk}{\slnpone}{\fhook{m}}$ obtained as the kernel of certain screening operators \cite{Gen17}.

\subsection{Wakimoto realisation of \texorpdfstring{$\uaffvoa{\kk}{\slnpone}$}{Vk(sln+1)}} \label{subsec:WakAff}
To describe the Wakimoto realisation of the universal affine vertex operator algebra $\uaffvoa{\kk}{\slnpone}$ ($m=n+1$ hooktype $\slnpone$ W-algebra), we first need to specify some notation. A convenient basis for $\slnpone$ is given by the $(n+1) \times (n+1)$ elementary matrices $M_{i,j}$ with $i\neq j$ along with the diagonal matrices $M_{i,i} - M_{i+1,i+1}$ where $i=1, \dots, n$. The latter matrices form a basis for a Cartan subalgebra $\cartan$ of $\slnpone$. Denote the simple roots of $\slnpone$ by $\sroots = \set{\sroot{1},\dots, \sroot{n}}$. The set of positive roots, denoted by $\proots$, consists of sums
\begin{equation}
    \sroot{i,j} = \sroot{i} + \sroot{i+1} + \dots + \sroot{j},
\end{equation}
where $1 \leq i \leq j \leq n$. The aforementioned basis of $\slnpone$ is in fact a Cartan-Weyl basis under the identifications
\begin{equation}
    \prvec{i,j} = \prvec{\sroot{i,j}} = M_{i,j+1}, \qquad 
    \cvec{i} = \cvec{\sroot{i}} = M_{i,i}-M_{i+1,i+1}, \qquad
    \nrvec{i,j} = \nrvec{\sroot{i,j}} = M_{j+1,i}.
    \end{equation}

Associated to the Cartan subalgebra $\cartan=\text{span}\set{\cvec{1},\dots,\cvec{n}}$ of $\slnpone$ is the Heisenberg vertex algebra $\uaffvoa{\kk+\dcox}{\cartan}$ where $\dcox=n+1$ is the dual Coxeter number of $\slnpone$. Strong generating fields of $\uaffvoa{\kk+\dcox}{\cartan}$ are denoted by $\alpha_1(z), \dots, \alpha_n(z)$, and these fields satisfy the operator product expansions
\begin{equation}
    \alpha_i(z) \alpha_j(w) \sim \frac{(\kk+\dcox)\inner{\sroot{i}}{\sroot{j}} \wun(w)}{(z-w)^2}.
\end{equation}

To describe the Wakimoto realisation of $\uaffvoa{\kk}{\slnpone}$, we need both the  Heisenberg vertex algebra $\uaffvoa{\kk+\dcox}{\cartan}$ and a bosonic ghost system $\bgvoa_{\alpha} \cong \bgvoa(\CC^2)$ for each positive root $\alpha \in \proots$. Denote the strong generating fields of $\bgvoa_{\alpha}$ by $\beta_{\alpha}(z)$ and $\gamma_{\alpha}(z)$, whose only singular operator product expansion is
\begin{equation}
    \beta_\alpha(z) \gamma_\alpha(w) \sim \frac{-\wun(w)}{z-w}.
\end{equation}
To save ink, we will frequently use the notation $\beta_{i,j}(z)=\beta_{\sroot{i,j}}(z)$ and $\gamma_{i,j}(z)=\gamma_{\sroot{i,j}}(z)$. The ghost fields corresponding to simple roots will also occasionally be denoted $\beta_i(z) = \beta_{i,i}(z)$ and $\gamma_i(z) = \gamma_{i,i}(z)$. 

The Wakimoto realisation of $\uaffvoa{\kk}{\slnpone}$ is an embedding of vertex operator algebras 
\begin{equation}
    \uaffvoa{\kk}{\slnpone} \hookrightarrow \uaffvoa{\kk+\dcox}{\cartan} \otimes \bigotimes_{\alpha \in \proots} \bgvoa_\alpha.
\end{equation}

General formulas for this embedding are known but are difficult to work with \cite{Fre05Opers,Pet97Free,KuwaConf90}. When $\kk$ is generic, the image of the embedding can alternatively be described in terms of $n$ screening operators $\int S_i(z) \dd z$ acting on $\uaffvoa{\kk+\dcox}{\cartan} \otimes \bigotimes_{\alpha \in \proots} \bgvoa_\alpha$, where the fields $S_i(z)$ involve intertwining operators of Fock modules for the free fields. 

Following \cite{Fre05Opers}, let $N_+$ be the Lie subgroup of $\grp{SL}(n+1)$ corresponding to the upper nilpotent subalgebra $\mathfrak{n}_+$ of $\slnpone$. That is, $N_+$ consists of all matrices of the form
\begin{equation} \label{eq:bigmatrix}
\begin{bmatrix}
    1 & x_{1,1} & x_{1,2} & \cdots & x_{1,n-1} & x_{1,n}\\
    0 & 1 & x_{2,2} & & & x_{2,n} \\
    \vdots& & & \ddots & & \vdots \\
    & & & &  x_{n-1,n-1} & x_{n-1,n} \\
    0 &  & & &   1 & x_{n,n}  \\
    0 & 0  &   & \cdots &  0 & 1 \\
\end{bmatrix},
\end{equation}
where $x_{i,j} \in \CC$. The right action of $N_+$ on $N_+$ by matrix multiplication induces an anti-homomorphism $\rho^R: \mathfrak{n}_+ \rightarrow \mathcal{D}(N_+)$ where $\mathcal{D}(N_+)$ is the ring of polynomial differential operators on $N_+$. We can write  \cite[Sec.~1.5]{Fre05Opers}
\begin{equation} \label{eq:PFunctions}
    \rho^R(\prvec{i}) =\sum_{j\leq k} P^{R,\set{j,k}}_{i} \left(x\right) \frac{\pd}{\pd x_{j,k}}
\end{equation}
for some polynomials $P^{R,\set{j,k}}_{i} \left(x\right) \in \CC \sqbrac{x_{1,1},\dots}$. 
\begin{lemma}\label{lem:WakScreeners}
Let $i \in \set{1,\dots,n}$. Then,
\begin{equation}
    \rho^R(\prvec{i}) = \frac{\pd}{\pd x_{i,i}} + \sum_{j=1}^{i-1} x_{i-j,i-1} \frac{\pd}{\pd x_{i-j,i}}.
\end{equation}
\end{lemma}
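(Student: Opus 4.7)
The proof is essentially a direct matrix-computation unwinding the definition of $\rho^R$, so the plan is simply to make this computation as painless as possible by exploiting the nilpotency of $\prvec{i}$.

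First I would record the coordinate convention: the parametrisation in \eqref{eq:bigmatrix} places the variable $x_{a,b}$ (with $1\le a\le b\le n$) at matrix position $(a,b+1)$, so a generic $g\in N_+$ may be written as
\begin{equation*}
  g = I + \sum_{1\le a\le b\le n} x_{a,b} M_{a,b+1}.
\end{equation*}
Under the identifications of the Cartan--Weyl basis stated above the lemma, $\prvec{i}=M_{i,i+1}$. Because $M_{i,i+1}^{2}=0$, we get the elementary but crucial simplification $\exp(t\prvec{i}) = I + tM_{i,i+1}$, so right-translation is exactly
\begin{equation*}
  g\cdot\exp(t\prvec{i}) \;=\; g + t\,g M_{i,i+1}.
\end{equation*}

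The next step is to observe that $g M_{i,i+1}$ has only one nonzero column, the $(i{+}1)$-th, and that this column is exactly the $i$-th column of $g$. Reading off the $i$-th column of $g$ from the explicit form above, its nonzero entries are $x_{a,i-1}$ in row $a$ for $1\le a\le i-1$, a $1$ in row $i$, and zeros below. Comparing with the original $(i{+}1)$-th column of $g$ (whose entries at row $a\le i$ are $x_{a,i}$, together with $1$ on the diagonal of row $i+1$, which is untouched), the flow $g\mapsto g\exp(t\prvec{i})$ is seen to act on coordinates by
\begin{equation*}
  x_{a,i} \;\longmapsto\; x_{a,i} + t\,x_{a,i-1}\ (1\le a\le i-1),\qquad x_{i,i}\;\longmapsto\; x_{i,i}+t,
\end{equation*}
with all other $x_{a,b}$ fixed; in particular the diagonal remains $1$, confirming that the image stays in $N_+$, so no further correction is required.

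Differentiating at $t=0$ via the defining identity $\rho^R(X)f(g)=\frac{\dd}{\dd t}\big|_{t=0}f(g\exp(tX))$ and applying the chain rule then gives
\begin{equation*}
  \rho^R(\prvec{i}) \;=\; \frac{\pd}{\pd x_{i,i}} \;+\; \sum_{a=1}^{i-1} x_{a,i-1}\frac{\pd}{\pd x_{a,i}}.
\end{equation*}
Finally the substitution $j=i-a$ (so $a=i-j$ and $j$ runs from $1$ to $i-1$ as $a$ runs from $i-1$ down to $1$) converts this to the claimed expression. No real obstacle arises; the only mildly error-prone part is bookkeeping the off-by-one between matrix-entry indices $(a,b+1)$ and coordinate labels $x_{a,b}$, and the only structural input beyond matrix multiplication is the nilpotency $M_{i,i+1}^2=0$, which truncates the exponential and makes the flow linear in $t$.
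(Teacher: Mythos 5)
Your proof is correct and follows essentially the same route as the paper's: both reduce to computing the product $g M_{i,i+1}$, observing that its only nonzero column is the $(i+1)$-th (equal to the $i$-th column of $g$), and reading off the resulting first-order differential operator in the coordinates $x_{a,b}$. The only cosmetic difference is that you make the right-translation flow explicit via $\exp(tM_{i,i+1})=I+tM_{i,i+1}$ before differentiating, whereas the paper differentiates directly by identifying $X\prvec{i}$ with the corresponding derivation applied to $X$.
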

\begin{proof}
Write the matrix $X$ in \eqref{eq:bigmatrix} as 
\begin{equation}
    X = I + \sum_{i<j} x_{i,j-1} M_{i,j}.
\end{equation}
The right action of $\prvec{i}=M_{i,i+1}$ is then given by
\begin{align}
    \rho^R(\prvec{i}) \left( X \right) &= X \prvec{i} = \left( I + \sum_{j<k} x_{j,k-1} M_{j,k} \right) M_{i,i+1} = M_{i,i+1} + \sum_{j=1}^{i-1} x_{j,i-1} M_{j,i+1} \\
    &= \frac{\pd}{\pd x_{i,i}} \left( X \right) + \sum_{j=1}^{i-1} x_{j,i-1} \frac{\pd}{\pd x_{j,i}} \left( X \right ) = \left(\frac{\pd}{\pd x_{i,i}} + \sum_{j=1}^{i-1} x_{i-j,i-1} \frac{\pd}{\pd x_{i-j,i}} \right) X. \notag \qedhere
\end{align}
\end{proof}

The screening fields $S_i(z)$, $i\in \set{1,\dots,n}$, of the Wakimoto realisation of $\uaffvoa{\kk}{\slnpone}$ are obtained by replacing the operators $\frac{\pd}{\pd x_{j,k}}$ and $x_{j,k}$ in $ \rho^R(\prvec{i})$ with the fields $\beta_{j,k}(z)$ and  $\gamma_{j,k}(z)$ respectively and taking the normally-ordered product of the result with a vertex operator for the Heisenberg vertex algebra $\uaffvoa{\kk+\dcox}{\cartan}$. The latter are intertwining operators of certain Fock modules for $\uaffvoa{\kk+\dcox}{\cartan}$ (see \cite[Sec.~5.2]{FBZ04}). The operator product expansion between a strong generator of $\uaffvoa{\kk+\dcox}{\cartan}$ and a vertex operator $\ee^{\lambda}(z)$ is given by
\begin{equation}
    \alpha_i(z) \ee^{\lambda}(w) \sim \frac{\lambda(h_i)\ee^{\lambda}(w)}{z-w}.
\end{equation}

The vertex operator needed for $S_i(z)$ is $\ee^{\frac{-1}{\kk+\dcox} \alpha_i}(z)$. Concretely, and in view of \cref{lem:WakScreeners},
\begin{equation} \label{eq:WakScreeners}
    S_i(z) = \no{\left( \beta_i(z) + \sum_{j=1}^{i-1} \gamma_{i-j,i-1}(z) \beta_{i-j,i}(z)  \right) \ee^{\frac{-1}{\kk+n+1} \alpha_i}(z) }.
\end{equation}

For example,
\begin{equation} \label{eq:WakScreenersExp}
\begin{gathered}
    S_1(z) = \no{\beta_1(z) \ee^{\frac{-1}{\kk+n+1} \alpha_1}(z)},\\
    S_2(z) = \no{\big(\beta_2(z)+\gamma_1(z) \beta_{1,2}(z) \big) \ee^{\frac{-1}{\kk+n+1} \alpha_2}(z)},\\
    S_3(z) = \no{\big(\beta_3(z) + \gamma_2(z) \beta_{2,3}(z) + \gamma_{1,2}(z) \beta_{1,3}(z)\big) \ee^{\frac{-1}{\kk+n+1} \alpha_3}(z)}.
\end{gathered}
\end{equation}
For generic $\kk$, the image of the embedding $\uaffvoa{\kk}{\slnpone} \hookrightarrow  \uaffvoa{\kk+\dcox}{\cartan} \otimes \bigotimes_{\alpha \in \proots} \bgvoa_\alpha$ of the Wakimoto realisation is given by \cite[Prop.~8.2]{Fre05Opers}
\begin{equation} \label{eq:WakAffScreen}
    \uaffvoa{\kk}{\slnpone} \cong \bigcap_{i=1}^n \textup{ker} \int S_i(z) \ \dd z \subset \uaffvoa{\kk+\dcox}{\cartan} \otimes \bigotimes_{\alpha \in \proots} \bgvoa_\alpha.
\end{equation}

Choosing different coordinates on $N_+$ (as long as they are homogeneous in the sense defined in \cite{Fre05Opers}) might result in different expressions for $ \rho^R(\prvec{i})$ and therefore for the screening operators $S_i(z)$. However the image of the screening operators obtained from $S_i(z)$ is independent, up to isomorphism, of the choice of homogeneous coordinates. The Wakimoto realisation can be defined in a coordinate-independent way, but this level of generality is not required for our purposes.

\subsection{Free field realisation of \texorpdfstring{$\uQHR{\kk}{\slnpone}{\fhook{m}}$}{Wk(sln+1,fm)}} \label{subsec:WakWalg}

Choose a nilpotent element $f \in \slnpone$ and a semisimple $x \in \slnpone$ whose adjoint action on $\slnpone$ generates a good grading for $f$ (such a pair of $f$ and $x$ exists for any nilpotent orbit in $\slnpone$ by \cite[Thm.~4.1]{ElasClass05}). One can then partition the set of simple roots $\sroots$ according to
\begin{equation}
    \sroots = \sroots_0 \cup \sroots_{\frac{1}{2}} \cup \sroots_{1}, \quad \sroots_a = \set{\alpha \in \sroots \ \vert \ (\slnpone)_\alpha \subset (\slnpone)_{a}}.
\end{equation}
We likewise define $\roots_a$ and $(\proots)_a$ in the obvious way. Let $\bgvoa\left((\slnpone)_{1/2}\right)$ be the neutral ghost vertex algebra from \cref{subsec:QHR}. While it is sometimes appropriate to consider gradings that necessitate the inclusion of $\bgvoa\left((\slnpone)_{1/2}\right)$, it is often convenient to choose our grading such that $(\slnpone)_{1/2}$ is zero, that is an \emph{even} grading of $\slnpone$. Given a nilpotent orbit, one can always pick a representative $f$ for which an even good grading of $\slnpone$ exists \cite[Prop.~4.2]{ElasClass05}.

The Wakimoto realisation of $\uQHR{\kk}{\slnpone}{f}$ is an embedding  \cite{GenScreen20}, for $\kk \neq -\dcox$,
\begin{equation}
    \uQHR{\kk}{\slnpone}{f} \hookrightarrow \uaffvoa{\kk+\dcox}{\cartan}\otimes \bgvoa\left((\slnpone)_{\frac{1}{2}}\right) \otimes \bigotimes_{\alpha \in (\proots)_0} \bgvoa_\alpha.
\end{equation}

We call a level $\kk$ \emph{generic} (for a given $f \in \mathfrak{g}$) if the homology of certain sub-complexes of the quantum hamiltonian reduction complex for $f \in \mathfrak{g}$ are isomorphic (see \cite[Def.~4.5]{Gen17}). It is known that the set of generic levels is Zariski dense in $\CC$ \cite[Lem.~4.4]{Gen17}.

When $\kk$ is generic, this embedding can be described as the intersection of kernels of screening operators obtainable from screening operators for the Wakimoto realisation of $\uaffvoa{\kk}{\slnpone}$ coming from particular choices of coordinates in $N_+$. Not all choices of coordinates for $N_+$ are suitable for all $\slnpone$ W-algebras: Following \cite[Sec.~4.1]{GenScreen20}, consider the subalgebras 
\begin{equation}
    (\slnpone)_{>0} = \bigoplus_{j>0} (\slnpone)_j, \quad (\slnpone)^+_0 = (\slnpone)_0 \cap \bigoplus_{\alpha \in \proots} (\slnpone)_\alpha,
\end{equation}
and their unipotent images $G_{>0}$ and $G^+_0$ in $N_+$ under the exponential map $\exp:\slnpone \rightarrow \SLG{SL}{n+1}$. Any choice of homogeneous coordinates $c(\mathfrak{n}_+)$ on $N_+$ gives a Wakimoto realisation of $\uaffvoa{\kk}{\slnpone}$ but will not necessarily interact nicely with the homology defining the W-algebra $\uQHR{\kk}{\slnpone}{f}$. 

To ensure a nice interaction, choose homogeneous coordinates $c(\mathfrak{g}_{>})$ and $c(\mathfrak{g}^+_{0})$ on $G_{>0}$ and $G^+_0$ respectively. Then, since $G_{>0} \times G^+_0 \cong N_+$, we obtain coordinates $c(\mathfrak{n}_+) = c(\mathfrak{g}_{>}) \cdot c(\mathfrak{g}^+_{0})$. Let $S_i(z)$ be the screening operators for the Wakimoto realisation of $\uaffvoa{\kk}{\slnpone}$ corresponding to this choice of coordinates. Then the screening fields $Q_i(z)$ for the W-algebra $\uQHR{\kk}{\slnpone}{f}$ are \cite[Thm.~4.8]{GenScreen20}
\begin{equation} \label{eq:WalgWakScreenings}
\begin{gathered}
    Q_i(z) = 
    \begin{cases}
        S_i(z) & \sroot{i} \in \sroots_{0}, \\
        \no{\sum_{\alpha_{j,k} \in \sqbrac{\sroot{i}}} P^{R,\set{j,k}}_{i} \left(\gamma(z)\right) \delta^{\alpha_{j,k}}(z) \ee^{\frac{-1}{\kk+\dcox} \alpha_{i}}(z)} & \sroot{i} \in \sroots_{\frac{1}{2}}, \\
        \no{\sum_{\alpha_{j,k} \in \sqbrac{\sroot{i}}} P^{R,\set{j,k}}_{i} \left(\gamma(z)\right)  \inner{f}{e_{\alpha_{j,k}}} \ee^{\frac{-1}{\kk+\dcox} \alpha_{i}}(z)} & \sroot{i} \in \sroots_{1},
    \end{cases}
\end{gathered}   
\end{equation}
where $P^{R,\set{j,k}}_{i} \left(\gamma(z)\right)$ is obtained from $P^{R,\set{j,k}}_{i} \left(x\right)$ in \eqref{eq:PFunctions} by replacing $x=(x_{1,1},\dots)$ with $\gamma(z)=(\gamma_{1,1}(z),\dots)$ and
\begin{equation}
    \sqbrac{\sroot{i}} = \left\{ \alpha_j \in \proots \ \vert \ \alpha_j-\sroot{i} \in \bigoplus_{\gamma \in \sroots_0} \ZZ \gamma \right\}.
\end{equation}

For example, let $\fhook{m}$ be the hook-type nilpotent element defined in \eqref{eq:hooknilpotent}. A good grading for $\fhook{m}$ is defined by
\begin{equation} \label{eq:GoodGrading}
\begin{tikzpicture}
    \node[circle,draw,fill=black,label=below:$\sroot{1}$,label=above:$0$] (1) at (0,0) {};
    \node[circle,draw,fill=black,label=below:$\sroot{2}$,label=above:$0$] (2) at (2,0) {};
    \node (A) at (3.5,0) {\dots};
    \node[circle,draw,fill=black,label=below:$\sroot{m-1}$,label=above:$0$] (m-1) at (5,0) {};
    \node[circle,draw,fill=black,label=below:$\sroot{m}$,label=above:$1$] (m) at (7,0) {};
    \node (B) at (8.5,0) {\dots};
    \node[circle,draw,fill=black,label=below:$\sroot{n}$,label=above:$1$] (n) at (10,0) {};

    \draw (1)--(2);
    \draw (2)--(A);
    \draw (A)--(m-1);
    \draw (m-1)--(m);
    \draw (m)--(B);
    \draw (B)--(n);
\end{tikzpicture} 
\end{equation}
where the grade given to the positive root vector $\prvec{\sroot{i}}$ is given by the number above $\sroot{i}$. It is easy to see that $\sroots_0 = \set{\sroot{1}, \dots, \sroot{m-1}}$ and $(\proots)_0 = \set{\sroot{i,j} \ \vert \ j \leq m-1}$. Hence the Wakimoto realisation of the $m$'th hook-type $\slnpone$ W-algebra is an embedding
\begin{equation} \label{eq:WakEmbHook}
    \uQHR{\kk}{\slnpone}{\fhook{m}} \hookrightarrow \uaffvoa{\kk+\dcox}{\cartan} \otimes \bigotimes_{j=1}^{m-1}\bigotimes_{i=1}^{j} \bgvoa_{\sroot{i,j}}.
\end{equation}

To obtain the screening operators that describe the embedding \eqref{eq:WakEmbHook} at generic levels, we first must check that the choice of coordinates for $X$ in \eqref{eq:bigmatrix} is compatible with the choice of good grading in \eqref{eq:GoodGrading}. By definition,
\begin{equation}
    (\slnpone)_{>0} = \text{span} \set{\prvec{i,j} \ \vert \ m \leq j}, \qquad (\slnpone)^+_0 = \text{span} \set{\prvec{i,j} \ \vert \ j \leq m-1}.
\end{equation}
A direct calculation shows that
\begin{equation}
    X = \left(\prod_{i \leq n} \exp(x_{i,n} e_{i,n}) \right)\cdot \left(\prod_{i \leq n-1} \exp(x_{i,n-1} e_{i,n-1}) \right) \cdot \cdot \cdot \left(\exp(x_{1,2} e_{1,2}) \cdot \exp(x_{2,2} e_{2,2}) \right) \cdot \left(\exp(x_{1,1} e_{1,1})\right),
\end{equation}
where the products over $i$ above are ordered left-to-right with increasing $i$. Choosing coordinates on $(\slnpone)^+_0$ and $(\slnpone)_{>0}$ according to
\begin{equation}
    X_m = \left(\prod_{i \leq m-1} \exp(x_{i,m-1} e_{i,m-1}) \right) \cdot \cdot \cdot \left(\exp(x_{1,2} e_{1,2}) \cdot \exp(x_{2,2} e_{2,2}) \right) \cdot \left(\exp(x_{1,1} e_{1,1})\right)
\end{equation}
and
\begin{equation}
    Y_m = \left(\prod_{i \leq n} \exp(x_{i,n} e_{i,n}) \right)  \cdot \cdot \cdot \left(\prod_{i \leq m} \exp(x_{i,m} e_{i,m}) \right)
\end{equation}
respectively, we have that $X = Y_m \cdot X_m$. So the coordinate system defined in \eqref{eq:bigmatrix} is compatible with $G_{>0} \times G^+_0 \cong N_+$ for the grading defined by \eqref{eq:GoodGrading}.

Now observe that since $\sroot{i} \in \sroots_0$ for all $i \leq m-1$, $Q_i(z) = S_i(z)$ for such $i$. On the other hand, $\sroot{i} \in \sroots_1$ for all $i \geq m$. For such $i$, $\sqbrac{\sroot{i}} \setminus \set{\sroot{i}}$ is only nonempty when $i=m$ and
\begin{equation}
     \sqbrac{\sroot{m}} = \left\{ \alpha_{j,m} \in \proots \ \vert \ j=1, \dots, m \right\}.
\end{equation}

So for $i > m$, $\sqbrac{\sroot{i}} = \set{\sroot{i}}$ and
\begin{equation}
    Q_i(z) =  \no{ P^{R,\set{i,i}}_{i} \left(\gamma(z)\right)  \inner{\fhook{m}}{e_{\alpha_{i}}} \ee^{\frac{-1}{\kk+\dcox} \alpha_{i}}(z)} 
    = \ee^{\frac{-1}{\kk+\dcox} \alpha_{i}}(z)
\end{equation}
and, using that $\inner{\fhook{m}}{e_{\alpha_{j,m}}} = \delta_{j,m}$,
\begin{align}
    Q_m(z) &= \no{\sum_{\alpha_{j,m} \in \sqbrac{\sroot{i}}} P^{R,\set{j,m}}_{i} \left(\gamma(z)\right)  \inner{\fhook{m}}{e_{\alpha_{j,m}}} \ee^{\frac{-1}{\kk+\dcox} \alpha_{m}}(z)} \\
    &= \no{P^{R,\set{m,m}}_{m} \left(\gamma(z)\right)  \inner{\fhook{m}}{e_{\alpha_{m,m}}} \ee^{\frac{-1}{\kk+\dcox} \alpha_{m}}(z)} \notag\\
    &= \ee^{\frac{-1}{\kk+\dcox} \alpha_{m}}(z). \notag
\end{align}
Therefore, for generic $\kk$, the image of the embedding \eqref{eq:WakEmbHook} of the Wakimoto realisation of $\uQHR{\kk}{\slnpone}{\fhook{m}}$ is 
\begin{equation} \label{eq:WakHookScreen}
    \uQHR{\kk}{\slnpone}{\fhook{m}} \cong \left( \bigcap_{i=1}^{m-1} \textup{ker} \int S_i(z) \ \dd z \right) \cap \left(  \bigcap_{i=m}^{n} \textup{ker} \int \ee^{\frac{-1}{\kk+\dcox} \alpha_{i}}(z) \ \dd z \right) \subset \uaffvoa{\kk+\dcox}{\cartan} \otimes \bigotimes_{j=1}^{m-1}\bigotimes_{i=1}^{j} \bgvoa_{\sroot{i,j}}.
\end{equation}

One should in principle check that the ghost parts of the screening fields $S_i(z)$ with $i \leq m-1$ contain only fields from $\bigotimes_{j=1}^{m-1}\bigotimes_{i=1}^{j} \bgvoa_{\sroot{i,j}}$, but this is an immediate consequence of \cref{lem:WakScreeners} due to the consequent formula \eqref{eq:WakScreeners}.

In what follows, it will be useful to have an additional free-field realisation of $\uQHR{\kk}{\slnpone}{\fhook{m}}$. As the W-algebra $\uQHR{\kk}{\slnpone}{f}$ only depends on the nilpotent orbit of $\slnpone$ containing $f$ up to isomorphism, we may choose any another hook-type nilpotent element $\bar{f}^{(m)}$ and the resulting W-algebra  $\uQHR{\kk}{\slnpone}{\bar{f}^{(m)}}$ will be isomorphic to $\uQHR{\kk}{\slnpone}{\fhook{m}}$. 

Moreover, due to the $f$ dependence of \eqref{eq:WalgWakScreenings}, different choices of representatives of the  $m$'th hook-type nilpotent orbit of $\slnpone$ will lead to (in general) different sets of screening operators that different describe subalgebras of $\uaffvoa{\kk+\dcox}{\cartan} \otimes \bigotimes_{j=1}^{m-1}\bigotimes_{i=1}^{j} \bgvoa_{\sroot{i,j}}$ isomorphic to $\uQHR{\kk}{\slnpone}{\fhook{m}}$. 

Consider the nilpotent matrix
\begin{equation}
    \bar{f}^{(m)} = \nrvec{1,m} + \sum_{i=m+1}^n \nrvec{i}.
\end{equation}
This matrix is conjugate to $\fhook{m}$ and so represents the same nilpotent orbit in $\slnpone$. Moreover, the grading defined by \eqref{eq:GoodGrading} is a good grading for $\bar{f}^{(m)}$. Hence the Wakimoto realisation of $\uQHR{\kk}{\slnpone}{\bar{f}^{(m)}}$ is again an embedding into $\uaffvoa{\kk+\dcox}{\cartan} \otimes \bigotimes_{j=1}^{m-1}\bigotimes_{i=1}^{j} \bgvoa_{\sroot{i,j}}$. To describe the corresponding screening operators, recall that the only place where the particular choice of $f$ was used in calculating the screening operators for the Wakimoto realisation of $\uQHR{\kk}{\slnpone}{\fhook{m}}$ was in $Q_i(z)$ for $i \geq m$. 

For $i > m$, the screening operator $\bar{Q}_i(z)$ for $\uQHR{\kk}{\slnpone}{\bar{f}^{(m)}}$ is equal to $Q_i(z)$ since $\inner{\bar{f}^{(m)}}{\prvec{i}}=\inner{\fhook{m}}{\prvec{i}}$ for such $i$. In fact, the only screening operator for $\uQHR{\kk}{\slnpone}{\bar{f}^{(m)}}$ that differs from those of $\uQHR{\kk}{\slnpone}{\fhook{m}}$ is the $m$'th one, which is
\begin{align}
    \bar{Q}_m(z) &= \no{\sum_{\alpha_{j,m} \in \sqbrac{\sroot{m}}} P^{R,\set{j,m}}_{i} \left(\gamma_{l,l'}(z)\right)  \inner{f}{e_{\alpha_{j,m}}} \ee^{\frac{-1}{\kk+\dcox} \alpha_{m}}(z)} \\
    &= \no{P^{R,\set{1,m}}_{m} \left(\gamma_{l,l'}(z)\right)  \inner{\nrvec{1,m}}{\prvec{j,m}} \ee^{\frac{-1}{\kk+\dcox} \alpha_{m}}(z)} \notag\\
    &= \no{\gamma_{1,m-1}(z) \ee^{\frac{-1}{\kk+\dcox} \alpha_{m}}(z)}, \notag
\end{align}
where we have used $\inner{\bar{f}^{(m)}}{e_{\alpha_{j,m}}} = \inner{\nrvec{1,m}}{\prvec{j,m}} = \delta_{j,1}$ and \eqref{eq:WakScreeners}. Of course many other choices of hook-type nilpotents are possible, but only these two will be used in what follows.

\section{Inverse reduction} \label{sec:Inverse}
Continuing in the template of \cite{Feh21c}, the key step in proving the existence of an inverse reduction embedding is by bosonising some number of bosonic ghost systems and comparing the resulting screening operators of the involved W-algebras. As we will see, the inverse reduction from $\uQHR{\kk}{\slnpone}{\fhook{m-1}}$ to $\uQHR{\kk}{\slnpone}{\fhook{m}}$ only requires a single bosonisation. All other inverse reductions amongst hook-type $\slnpone$ W-algebras are then obtained as compositions of these ``one step" inverse reductions.

\subsection{From minimal to affine} \label{subsec:MintoAff}
Before describing the general hook-to-hook inverse reduction, we will explore the minimal-to-affine inverse reduction in some detail. The reasons are twofold. For one, the affine vertex algebra $\uaffvoa{\kk}{\slnpone}$ and its representation theory (as well as that of its simple quotients) are of considerable interest. Secondly, as we will see, the hook-to-hook inverse reduction has a lot of similarities to a minimal-to-affine reduction, but for $\mathfrak{sl}_m$ for some $m < n+1$. So the set up used in the minimal-to-affine case can be adapted to the general hook-to-hook case with a few modifications. 

Recall from \cref{sec:WakWalg} the Wakimoto realisations and associated screening operators of $\uaffvoa{\kk}{\slnpone}$ and the minimal $\slnpone$ W-algebra $\uQHR{\kk}{\slnpone}{\fmin} =\uQHR{\kk}{\slnpone}{\bar{f}^{(n)}}$:
\begin{equation} \label{eq:WakAffandMin}
\begin{aligned}
    \uaffvoa{\kk}{\slnpone} &\cong \bigcap_{i=1}^n \textup{ker} \int S_i(z) \ \dd z \subset \uaffvoa{\kk+\dcox}{\cartan} \otimes \bigotimes_{\alpha \in \proots} \bgvoa_\alpha, \\
    \uQHR{\kk}{\slnpone}{\fmin} &\cong \left( \bigcap_{i=1}^{n-1} \textup{ker} \int S_i(z) \ \dd z \right) \cap \textup{ker} \int \no{\gamma_{1,n-1}(z) \ee^{\frac{-1}{\kk+\dcox} \alpha_{n}}(z)} \ \dd z \subset \uaffvoa{\kk+\dcox}{\cartan} \otimes \bigotimes_{j=1}^{n-1}\bigotimes_{i=1}^{j} \bgvoa_{\sroot{i,j}}.
\end{aligned}
\end{equation}
The only screening operator that $\uaffvoa{\kk}{\slnpone}$ and $\uQHR{\kk}{\slnpone}{\fmin}$ do not superficially have in common is the $n$'th one. By \eqref{eq:WakScreeners},
\begin{equation}
    S_n(z) = \no{\left( \beta_n(z) + \sum_{j=1}^{n-1} \gamma_{n-j,n-1}(z) \beta_{n-j,n}(z)  \right) \ee^{\frac{-1}{\kk+n+1} \alpha_n}(z) }.
\end{equation}

The $j=n-1$ term in the sum is $\gamma_{1,n-1}(z)\beta_{1,n}(z)$, in which we see the ghost field appearing in the $n$'th screening operator for $\uQHR{\kk}{\slnpone}{\fmin}$. Our goal then is to `free' $\gamma_{1,n-1}(z)$ in such a way that the screening operators $S_i(z)$, $i < n$, are unchanged. The first order of business is thus to deal with $\beta_{1,n}(z)$. This is done by composing the Wakimoto realisation of $\uaffvoa{\kk}{\slnpone}$ with the \emph{FMS bosonisation} \cite{FMS86} of the ghost system $\bgvoa_{\sroot{1,n}}$.

FMS bosonisation is an embedding of a ghost vertex algebra $\bgvoa$ into the half-lattice vertex algebra $\Pi$, which is a vertex algebra with strong generators $c(z), d(z)$ and $\ee^{m c}(z)$ for $m \in \ZZ$ and singular operator product expansions \cite{BerRep02}
\begin{equation}
    c(z) d(w) \cong \frac{2 \wun(w)}{(z-w)^2}, \qquad d(z) \ee^{m c}(w) \sim \frac{2m \ee^{m c}(w)}{z-w}.
\end{equation}
The embedding is given by
\begin{equation}
    \beta(z) \mapsto \ee^c(z), \quad \gamma(z) \mapsto \frac{1}{2}\no{\left(c(z)+d(z)\right) \ee^{-c}(z)},
\end{equation}
which conveniently can be described in terms of a screening operator:
\begin{equation}
    \bgvoa \cong \textup{ker} \int \ee^{\frac{1}{2}c + \frac{1}{2}d}(z) \ dz \subset \Pi.
\end{equation}

Consider again the Wakimoto realisation of $\uaffvoa{\kk}{\slnpone}$ when $n>1$, which is when $(\proots) \setminus \hroot$ is nonempty. The $n=1$ minimal-to-affine inverse reduction is known \cite{SemInv94,AdaRea17} and the argument is essentially the same as what we are about to describe. Applying FMS bosonisation to the ghost vertex algebra $\bgvoa_{\sroot{1,n}}$, we obtain an embedding of $\uaffvoa{\kk}{\slnpone}$ into $\uaffvoa{\kk+\dcox}{\cartan} \otimes \Pi \otimes \bigotimes_{\alpha \in \proots \setminus \hroot} \bgvoa_\alpha$ specified by the screening operators (see \cite[Sec.~3.1]{Feh21c})
\begin{equation} \label{eq:MinToAffIntermediate}
     \uaffvoa{\kk}{\slnpone} \cong \left( \bigcap_{i=1}^{n-1} \textup{ker} \int S_i(z) \ \dd z \right) \cap \textup{ker} \int \no{\tilde{\gamma}_{1,n-1}(z) \ee^{\frac{-1}{\kk+\dcox} \tsroot{n}}(z)} \ \dd z \cap  \textup{ker} \int \ee^{\frac{1}{2}c + \frac{1}{2}d}(z) \ dz,
\end{equation}
where $\tsroot{n}(z) = \sroot{n}(z)-(\kk+\dcox)c(z)$ and 
\begin{equation} \label{eq:FirstTildedField}
    \tilde{\gamma}_{1,n-1}(z) = \gamma_{1,n-1}(z) + \no{ \beta_n(z)\ee^{-c}(z) + \sum_{j=1}^{n-2} \gamma_{n-j,n-1}(z) \beta_{n-j,n}(z)\ee^{-c}(z)}.
\end{equation}

So we have an embedding $\uaffvoa{\kk}{\slnpone}$ described by screening operators that are superficially the same as those of the Wakimoto realisation of $\uQHR{\kk}{\slnpone}{\fmin}$, as well as a screening operator coming from the bosonisation. However the domains are different, so there is no guarantee that the first $n$ screening operators in \eqref{eq:MinToAffIntermediate} behave like the minimal $\slnpone$ W-algebra ones. 

To show that they in fact do,  we must identify the fields in $\uaffvoa{\kk+\dcox}{\cartan} \otimes \Pi \otimes \bigotimes_{\alpha \in \proots \setminus \hroot} \bgvoa_\alpha$ that have singular operator product expansions with the fields appearing in the first $n$ screening operators in \eqref{eq:MinToAffIntermediate} and show that the subalgebra spanned by these fields is isomorphic to $\uaffvoa{\kk+\dcox}{\cartan} \otimes \bigotimes_{j=1}^{n-1}\bigotimes_{i=1}^{j} \bgvoa_{\sroot{i,j}}$.

For $i=1,\dots, n$, define $\tsroot{i}(z) = \sroot{i}(z)-\delta_{i,n}(\kk+\dcox)c(z)$ and let $\widetilde{\uaffvoa{\kk+\dcox}{\cartan}}$ be the subalgebra of $\uaffvoa{\kk+\dcox}{\cartan} \otimes \Pi \otimes \bigotimes_{\alpha \in \proots \setminus \hroot} \bgvoa_\alpha$ generated by these fields. Likewise, let $\widetilde{\Pi}$ be the subalgebra generated by
\begin{equation} \label{eq:tildeHalfLat}
\begin{gathered}
    \tilde{c}(z) = c(z), \qquad 
    \ee^{m\tilde{c}}(z) = \ee^{mc}(z), \\
    \hspace{-15em} \tilde{d}(z) = d(z) 
    - (\kk+\dcox)\inner{\fwt{n}}{\fwt{n}} c(z)
    + 2 \fwt{n}(z) \\
    \hspace{4em}- \sum_{\substack{\alpha,\alpha' \in \proots \setminus \hroot \\ \alpha+\alpha'=\hroot}} \no{\beta_\alpha (z) \beta_{\alpha'}(z)\ee^{-c}(z)} 
    - \sum_{\substack{\alpha,\alpha',\alpha'' \in \proots \setminus \hroot \\ -\alpha+\alpha'+\alpha''=\hroot}} \no{\gamma_\alpha(z) \beta_{\alpha'} (z) \beta_{\alpha''}(z)\ee^{-c}(z)}.
\end{gathered}
\end{equation}
Finally for $\alpha \in \proots \setminus \hroot$, let $\widetilde{\bgvoa_\alpha}$ be the subalgebra of $\uaffvoa{\kk+\dcox}{\cartan} \otimes \Pi \otimes \bigotimes_{\alpha \in \proots \setminus \hroot} \bgvoa_\alpha$ generated by the fields
\begin{equation} \label{eq:tildeGhosts}
\begin{aligned}
    \tilde{\beta}_\alpha(z) &= \beta_\alpha(z)
    - \frac{1}{2}\sum_{\substack{\alpha',\alpha'' \in \proots \setminus \hroot \\ \alpha'+\alpha''=\hroot+\alpha}} \no{\beta_{\alpha'} (z) \beta_{\alpha''}(z)\ee^{-c}(z)}, \\
    \tilde{\gamma}_\alpha(z) &= \gamma_\alpha(z) 
    + \sum_{\substack{\alpha' \in \proots \setminus \hroot \\ \alpha'=\hroot-\alpha}}\no{\beta_{\alpha'}(z)\ee^{-c}(z)}
    + \sum_{\substack{\alpha'',\alpha''' \in \proots \setminus \hroot \\ -\alpha''+\alpha'''=\hroot-\alpha}}\no{\gamma_{\alpha''}(z)\beta_{\alpha'''}(z)\ee^{-c}(z)}.
\end{aligned}
\end{equation}

These formulae appear complicated but simplify considerably in practice. For example, the sum in the definition of $\tilde{\beta}_\alpha(z)$, if it is nonzero, has two summands that are equal. The first sum in the definition of $\tilde{\gamma}_\alpha(z)$ has either one or no nonzero summands (see \eqref{eq:tildeBsimpler}).

\begin{definition}
A positive $\slnpone$ root $\alpha \in \proots$ is \emph{internal} if, when expressed as a sum of simple roots $\sroot{i} \in \sroots$, the coefficients of $\alpha_1$ and $\alpha_n$ are zero. A positive $\slnpone$ root is \emph{exposed} if it is not internal. 
\end{definition}

\begin{proposition} \label{lem:InternalExposed}
\begin{itemize}
    \item If $\alpha \in \proots \setminus \hroot$ is exposed, then $\tilde{\beta}_\alpha(z) = \beta_\alpha(z)$.
    \item If $\alpha \in \proots \setminus \hroot$ is exposed, the contribution to the sum in $\tilde{\gamma}_\alpha(z)$ is only non-zero for exposed $\alpha',\alpha'''$ and internal $\alpha''$.
    \item If $\alpha \in \proots \setminus \hroot$ is internal, then $\tilde{\gamma}_\alpha(z) = \gamma_\alpha(z)$.
    \item If $\alpha \in \proots \setminus \hroot$ is internal, the contribution to the sum in $\tilde{\beta}_\alpha(z)$ is only non-zero for exposed $\alpha', \alpha''$.
    \item The contribution to the first sum in the definition of $\tilde{d}(z)$ is only non-zero for exposed $\alpha, \alpha' \in \proots \setminus \hroot$.
    \item The contribution to the second sum in the definition of $\tilde{d}(z)$ is only non-zero for internal $\alpha \in \proots \setminus \hroot$ and exposed $\alpha', \alpha''$.
\end{itemize} 
\end{proposition}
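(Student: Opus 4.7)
The plan is to prove each of the six statements by reducing to elementary combinatorics on the root system $\proots$ of $\slnpone$. Under the identification $\proots = \set{\sroot{i,j} : 1 \le i \le j \le n}$ with $\hroot = \sroot{1,n}$, a root $\sroot{i,j}$ is internal iff $i > 1$ and $j < n$, while the exposed roots in $\proots \setminus \hroot$ are those with $i=1<j<n$ or $1<i\le j=n$. All six bullets then follow from a small number of interval-matching arguments in which one compares coefficients of simple roots on both sides of an additive relation.

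For bullets 1 and 4 I would fix $\alpha = \sroot{i,j} \in \proots \setminus \hroot$ and solve $\alpha' + \alpha'' = \hroot + \alpha$ with $\alpha',\alpha'' \in \proots \setminus \hroot$. At positions where $\hroot + \alpha$ has coefficient $2$ (namely on $[i,j]$) both $\alpha'$ and $\alpha''$ must contain the corresponding simple root, so both intervals cover $[i,j]$; at positions with coefficient $1$, exactly one does. Since both are intervals, this forces, up to swap, either $(\alpha',\alpha'') = (\hroot,\alpha)$ (excluded) or $(\alpha',\alpha'') = (\sroot{1,j},\sroot{i,n})$, which lies in $\proots \setminus \hroot$ iff $i>1$ and $j<n$. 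This is exactly the statement that the sum in $\tilde{\beta}_\alpha$ vanishes for exposed $\alpha$ (bullet 1) and, for internal $\alpha$, produces only the two exposed roots $\sroot{1,j}$ and $\sroot{i,n}$ (bullet 4).

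For bullets 2 and 3 I would analyse the two sums in $\tilde{\gamma}_\alpha$ in turn. The first sum contributes iff $\hroot - \alpha \in \proots$, which happens iff $\alpha$ is exposed; the resulting $\alpha'$ is $\sroot{j+1,n}$ (if $\alpha=\sroot{1,j}$) or $\sroot{1,i-1}$ (if $\alpha=\sroot{i,n}$), both exposed. For the second sum, rewrite the constraint as $\alpha''' = \alpha'' + (\hroot - \alpha)$; non-negativity of coefficients forces the interval $\alpha''$ to sit inside $\alpha'''$, with complement equal to the support of $\hroot - \alpha$. For internal $\alpha$ this complement consists of two disjoint non-empty intervals at the two ends of $[1,n]$, forcing $\alpha''' = \hroot$ and hence excluding the term, which proves bullet 3. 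For exposed $\alpha = \sroot{1,j}$ (the case $\alpha = \sroot{i,n}$ being symmetric) the only solutions are $\alpha'' = \sroot{a,j}$, $\alpha''' = \sroot{a,n}$ with $a > 1$, so $\alpha''$ is internal and $\alpha'''$ exposed, giving bullet 2.

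Bullets 5 and 6 reduce to the same combinatorics. In the first sum of $\tilde{d}$, the relation $\alpha + \alpha' = \hroot$ partitions $[1,n]$ into two consecutive intervals of the form $\sroot{1,k}$ and $\sroot{k+1,n}$, both automatically exposed. The second sum reuses the analysis of bullet 4: the constraint $\alpha' + \alpha'' = \hroot + \alpha$ admits no solutions in $\proots \setminus \hroot$ for exposed $\alpha$ (the only candidate being the excluded pair $(\hroot,\alpha)$), and for internal $\alpha$ produces exactly the exposed pair $(\sroot{1,j},\sroot{i,n})$. The only real obstacle is bookkeeping across the subcases (exposed-from-the-left, exposed-from-the-right, internal); no deeper idea is required once the interval parametrisation and coefficient-matching principle are in hand.
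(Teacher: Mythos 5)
Your proposal is correct and follows essentially the same route as the paper: parametrise positive roots as intervals $\sroot{i,j}$, and settle each bullet by matching coefficients of simple roots in the relevant additive constraint (noting that a sum of two roots reaching $\hroot+\alpha$ must involve $\hroot$ unless $\alpha$ is internal, and that $\hroot-\alpha$ decomposes as a difference of roots only with an internal subtrahend and exposed minuend). The paper only writes out the exposed case explicitly and dismisses the rest as "similar"; your write-up fills in the remaining cases with the same interval/coefficient-matching argument, so there is no substantive difference in method.
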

\begin{proof}
    If $\alpha \in \proots \setminus \hroot$ is exposed, it must be $\alpha = \sroot{1,j}$ or $\sroot{k,n}$ for $j\neq n$ and $k \neq 1$. Since $\theta = \sum_{i=1}^n \alpha_i$,
    \begin{equation}
        \hroot+\sroot{1,j} = 2\sroot{1} + \dots + 2 \sroot{j} + \sroot{j+1} + \dots + \sroot{n}, \quad \hroot+\sroot{k,n} = \sroot{1} + \dots + \sroot{k-1} + 2\sroot{k} + \dots + 2\sroot{n}.
    \end{equation}
    The only way two positive roots $\alpha',\alpha'' \in \proots$ can sum to the above root lattice elements is if one of them is $\hroot$ which is excluded in the sum defining $\tilde{\beta}_\alpha(z)$. Hence the sum is zero and $\tilde{\beta}_\alpha(z) = \beta_\alpha(z)$. On the other hand, the difference $\hroot-\alpha$ in both cases is an also exposed root. If there were two roots $\alpha'',\alpha''' \in \proots \setminus \hroot$ such that 
    \begin{equation}
        -\alpha''+\alpha'''=\hroot-\alpha = \sroot{j+1,n} \quad \text{ or } \quad \sroot{1,k-1}, 
    \end{equation}
    then $\alpha'''$ must be exposed and $\alpha''$ must be internal. This proves the second statement. The remaining statements are proved in a similar way. 
\end{proof}

\noindent In terms of the aforementioned parametrisation of $\slnpone$ positive roots, the definitions of the tilded ghost fields \eqref{eq:tildeGhosts} can be be written in a more explicit form. Let $\alpha$ be internal, so $\alpha =\sroot{i,j}$ for some $1<i\leq j < n$. Then
\begin{equation} \label{eq:tildeBsimpler}
    \tilde{\beta}_{i,j}(z) = \beta_{i,j}(z)
   = \beta_{i,j}(z) - \no{\beta_{1,j} (z) \beta_{i,n}(z)\ee^{-c}(z)}.
\end{equation}
Likewise, let $\alpha \in \proots \setminus \hroot$ be an external root of the form $\alpha =\sroot{1,j}$ for some $j < n$ or $\alpha = \sroot{j,n}$ for some $1<j$. Then,
\begin{equation}
\begin{gathered}
    \tilde{\gamma}_{1,j}(z) 
    = \gamma_{1,j}(z) + \no{\beta_{j+1,n}(z)\ee^{-c}(z)} + \sum_{k=2}^{j} \no{\gamma_{k,j}(z)\beta_{k,n}(z)\ee^{-c}(z)}, \\
    \tilde{\gamma}_{j,n}(z) 
    = \gamma_{j,n}(z) + \no{\beta_{1,j-1}(z)\ee^{-c}(z)} + \sum_{k=j}^{n-1} \no{\gamma_{j,k}(z)\beta_{1,k}(z)\ee^{-c}(z)}.
\end{gathered}
\end{equation}
The formula for $\tilde{\gamma}_{1,n-1}(z)$ reproduces the field seen in \eqref{eq:FirstTildedField} obtained by bosonising the Wakimoto realisation of $\uaffvoa{\kk}{\slnpone}$.

\begin{proposition} \label{prop:tildeGhostOPEs}
    Let $\alpha, \gamma \in  \proots \setminus \hroot$. Then,
    \begin{equation}
        \tilde{\beta}_\alpha(z) \tilde{\beta}_\lambda(w) \sim 0 \sim \tilde{\gamma}_\alpha(z) \tilde{\gamma}_\lambda(w), \qquad 
        \tilde{\beta}_\alpha(z) \tilde{\gamma}_\lambda(w) \sim \frac{-\delta_{\alpha=\lambda} \wun(w)}{z-w}.
    \end{equation}
\end{proposition}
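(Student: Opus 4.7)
The strategy is a direct computation using Wick's theorem, organised by the internal/exposed dichotomy of \cref{lem:InternalExposed}. Recall that when $\alpha$ is exposed the correction to $\tilde{\beta}_\alpha$ vanishes, so $\tilde{\beta}_\alpha = \beta_\alpha$, and when $\alpha$ is internal the correction to $\tilde{\gamma}_\alpha$ vanishes, so $\tilde{\gamma}_\alpha = \gamma_\alpha$. In each case the remaining tilded field is given by an explicit low-complexity expression, and the resulting OPEs reduce to a finite case-check.

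The $\tilde{\beta}\tilde{\beta}$ OPE is essentially automatic: every summand in $\tilde{\beta}_\alpha$ is a normally-ordered product of $\beta$ fields with $\ee^{-c}$, and the constituent pairwise OPEs $\beta(z)\beta(w)$, $c(z)c(w)$ and $\beta(z)\ee^{mc}(w)$ are all regular. Hence Wick contractions produce nothing singular. The $\tilde{\gamma}\tilde{\gamma}$ OPE is similar but requires a short argument: the only way to produce a singular term is via a $\beta_{\alpha'}$ in one factor contracting with a $\gamma_{\alpha''}$ in the other. Using \cref{lem:InternalExposed} to track which of the index constraints $\alpha' = \hroot - \alpha$ or $\alpha''' - \alpha'' = \hroot - \alpha$ can occur for each internal/exposed choice of $\alpha$ and $\lambda$, one checks case by case that the resulting contributions vanish (either because the required index is not a positive root, or because two contributions cancel pairwise).

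The $\tilde{\beta}\tilde{\gamma}$ OPE is the main computation. The candidate singular contributions are:
\begin{enumerate}
\item the leading contraction $\beta_\alpha(z)\gamma_\lambda(w) \sim -\delta_{\alpha,\lambda} \wun(w)/(z-w)$;
\item $\beta_\alpha(z)$ contracting with the $\gamma_{\alpha''}$ in the last sum of $\tilde{\gamma}_\lambda$, nonzero only when $\alpha'' = \alpha$, producing a term of the schematic form $-\no{\beta_{\alpha'''}\ee^{-c}}(w)/(z-w)$ with $\alpha''' = \hroot - \lambda + \alpha$;
\item the $\no{\beta_{\alpha'}\beta_{\alpha''}\ee^{-c}}$ correction to $\tilde{\beta}_\alpha$ contracting with $\gamma_\lambda(w)$, nonzero when $\lambda \in \set{\alpha',\alpha''}$, which (together with the symmetric term and the factor of $\tfrac12$) produces $\no{\beta_{\alpha''}\ee^{-c}}(w)/(z-w)$ with $\alpha'' = \hroot + \alpha - \lambda$;
\item cross-contractions between the corrections to $\tilde{\beta}_\alpha$ and those to $\tilde{\gamma}_\lambda$, which produce terms involving $\no{\beta\ee^{-c}\ee^{-c}}$ or $\no{\beta\beta\gamma\ee^{-c}\ee^{-c}}$.
\end{enumerate}
Comparing the index constraints, (ii) and (iii) produce identical fields with opposite signs and cancel, as do the pairs in (iv). The only surviving term is (i), giving the required OPE.

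The main obstacle is the bookkeeping in step (iv): there are several ways a $\beta$ in the $\tilde{\beta}_\alpha$-correction can contract with a $\gamma$ in the $\tilde{\gamma}_\lambda$-correction, and one must verify that the root-sum constraints $\alpha'+\alpha'' = \hroot+\alpha$ and $\alpha''' - \alpha'' = \hroot - \lambda$ (or $\alpha' = \hroot - \lambda$) pair the surviving contributions into cancelling pairs. The explicit simplified expressions obtained from \cref{lem:InternalExposed}, together with the observation that $\hroot - \alpha$ is exposed iff $\alpha$ is exposed, reduce the check to a small number of subcases that can be handled uniformly.
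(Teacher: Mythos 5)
Your plan follows essentially the same route as the paper's proof: a direct Wick computation organised by the internal/exposed dichotomy of \cref{lem:InternalExposed}, with the leading contraction $\beta_\alpha(z)\gamma_\lambda(w)$ surviving and the correction terms (your (ii) and (iii)) cancelling exactly as you describe. The one inaccuracy is in your step (iv): the cross-contractions between the corrections do not cancel in pairs — they vanish individually, since a singular contraction there would force an exposed root ($\alpha'$ or $\alpha''$) to coincide with an internal one ($\lambda''$), which is impossible.
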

\begin{proof}
    That $ \tilde{\beta}_\alpha(z) \tilde{\beta}_\lambda(w) \sim 0$ is clear from \eqref{eq:tildeGhosts}. Additionally, 
    \begin{align}
        \tilde{\beta}_\alpha(z) \tilde{\gamma}_\lambda(w) \sim& \ \beta_\alpha(z) \left(\gamma_\lambda(w) 
       + \sum_{\substack{\lambda'',\lambda''' \in \proots \setminus \hroot \\ -\lambda''+\lambda'''=\hroot-\lambda}}\no{\gamma_{\lambda''}(w)\beta_{\lambda'''}(w)\ee^{-c}(w)}\right) \\
        & - \frac{1}{2}\sum_{\substack{\alpha',\alpha'' \in \proots \setminus \hroot \\ \alpha'+\alpha''=\hroot+\alpha}} \no{\beta_{\alpha'} (z) \beta_{\alpha''}(z)\ee^{-c}(z)} \left(\gamma_\lambda(w) 
        + \sum_{\substack{\lambda'',\lambda''' \in \proots \setminus \hroot \\ -\lambda''+\lambda'''=\hroot-\lambda}}\no{\gamma_{\lambda''}(w)\beta_{\lambda'''}(w)\ee^{-c}(w)}\right). \notag
    \end{align}
    If $\beta_\alpha(z) \no{\gamma_{\lambda''}(w)\beta_{\lambda'''}(w)\ee^{-c}(w)}$ is singular, it must be the case that $\lambda'' = \alpha$. So $-\lambda''+\lambda'''= -\alpha+\lambda'''=\hroot-\lambda$. Rearranging gives $\lambda+\lambda'''=\hroot+\alpha$. Hence there are two addition singular contributions from the $\alpha',\alpha''$-sum's operator product expansion with $\gamma_\lambda(w)$, namely those corresponding to $\alpha'=\lambda, \alpha''=\lambda'''$ and  $\alpha'=\lambda''', \alpha''=\lambda$. These three contributions cancel, so we are left with
    \begin{equation} \label{tildebtildegInt}
        \tilde{\beta}_\alpha(z) \tilde{\gamma}_\lambda(w) \sim \beta_\alpha(z) \gamma_\lambda(w)  - \frac{1}{2}\sum_{\substack{\alpha',\alpha'', \lambda'',\lambda'''  \in \proots \setminus \hroot \\ \alpha'+\alpha''=\hroot+\alpha \\ -\lambda''+\lambda'''=\hroot-\lambda}} \no{\beta_{\alpha'} (z) \beta_{\alpha''}(z)\ee^{-c}(z)} \no{\gamma_{\lambda''}(w)\beta_{\lambda'''}(w)\ee^{-c}(w)}.
    \end{equation}
    
    If the summand in \eqref{tildebtildegInt} is nonsingular, then either $\alpha'=\lambda''$ or $\alpha''=\lambda''$. This cannot be the case as, by \cref{lem:InternalExposed}, $\alpha'$ and $\alpha''$ are exposed and $\lambda''$ is internal. Finally all that remains is to compute $\tilde{\gamma}_\alpha(z) \tilde{\gamma}_\lambda(w)$:
    \begin{align}
        \tilde{\gamma}_\alpha(z) \tilde{\gamma}_\lambda(w) \sim& \ 
    \left(\gamma_\alpha(z) 
    + \sum_{\substack{\alpha' \in \proots \setminus \hroot \\ \alpha'=\hroot-\alpha}}\no{\beta_{\alpha'}(z)\ee^{-c}(z)}
    + \sum_{\substack{\alpha'',\alpha''' \in \proots \setminus \hroot \\ -\alpha''+\alpha'''=\hroot-\alpha}}\no{\gamma_{\alpha''}(z)\beta_{\alpha'''}(z)\ee^{-c}(z)}\right)\\
    &\cdot\left(\gamma_\lambda(w) 
    + \sum_{\substack{\lambda' \in \proots \setminus \hroot \\ \lambda'=\hroot-\lambda}}\no{\beta_{\lambda'}(w)\ee^{-c}(w)}
    + \sum_{\substack{\lambda'',\lambda''' \in \proots \setminus \hroot \\ -\lambda''+\lambda'''=\hroot-\lambda}}\no{\gamma_{\lambda''}(w)\beta_{\lambda'''}(w)\ee^{-c}(w)}\right). \notag
    \end{align}
    Expanding the brackets,
    \begin{align}
        \tilde{\gamma}_\alpha(z) \tilde{\gamma}_\lambda(w) \sim&\ 
        \sum_{\substack{\lambda' \in \proots \setminus \hroot \\ \lambda'=\hroot-\lambda}}\delta_{\alpha,\lambda'}\frac{\ee^{-c}(w)}{z-w} + 
        \sum_{\substack{\lambda'',\lambda''' \in \proots \setminus \hroot \\ -\lambda''+\lambda'''=\hroot-\lambda}}\delta_{\alpha,\lambda'''}\frac{\no{\gamma_{\lambda''}(w)\ee^{-c}(w)}}{z-w}\\ &-
        \sum_{\substack{\alpha' \in \proots \setminus \hroot \\ \alpha'=\hroot-\alpha}}\delta_{\alpha',\lambda}\frac{\ee^{-c}(w)}{z-w} - 
        \sum_{\substack{\alpha'',\alpha''' \in \proots \setminus \hroot \\ -\alpha''+\alpha'''=\hroot-\alpha}}\delta_{\alpha''',\lambda}\frac{\no{\gamma_{\alpha''}(w)\ee^{-c}(w)}}{z-w} \notag\\&+
        \left(\sum_{\substack{\alpha'',\alpha''' \in \proots \setminus \hroot \\ -\alpha''+\alpha'''=\hroot-\alpha}}\no{\gamma_{\alpha''}(z)\beta_{\alpha'''}(z)\ee^{-c}(z)}\right)
        \left(\sum_{\substack{\lambda'',\lambda''' \in \proots \setminus \hroot \\ -\lambda''+\lambda'''=\hroot-\lambda}}\no{\gamma_{\lambda''}(w)\beta_{\lambda'''}(w)\ee^{-c}(w)}\right). \notag
    \end{align}
    
    Everything except for the last line above cancels. To deal with it, note that $\alpha''$ and $\lambda''$ are internal and $\alpha'''$ and $\lambda'''$ are exposed by \cref{lem:InternalExposed}, so we therefore have $\tilde{\gamma}_\alpha(z) \tilde{\gamma}_\lambda(w) \sim 0$.
    \end{proof}
    
\begin{proposition} \label{prop:tildeHalfLatAffOPEs}
    The vertex subalgebras $\widetilde{\Pi}$ and $\widetilde{\uaffvoa{\kk+\dcox}{\cartan}}$ are isomorphic to $\Pi$ and $\uaffvoa{\kk+\dcox}{\cartan}$ respectively.
\end{proposition}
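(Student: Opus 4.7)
The plan is to deduce both isomorphisms by checking that the stated tilded generators satisfy the same defining OPE relations as their untilded counterparts; since $\uaffvoa{\kk+\dcox}{\cartan}$ and $\Pi$ are each strongly generated by their standard fields subject to the listed relations, verifying these OPEs is all that is required.

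The Heisenberg case $\widetilde{\uaffvoa{\kk+\dcox}{\cartan}}\cong\uaffvoa{\kk+\dcox}{\cartan}$ is short. Only $\tsroot{n}(z)=\sroot{n}(z)-(\kk+\dcox)c(z)$ differs from its untilded version; since $c$ lies in the $\Pi$ tensor factor, $c(z)\sroot{j}(w)\sim 0$ for every $j$, and $c(z)c(w)\sim 0$ inside $\Pi$ itself, every $\tsroot{i}(z)\tsroot{j}(w)$ reduces directly to the original level-$\kk+\dcox$ Heisenberg OPE.

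For $\widetilde{\halflattice}\cong\halflattice$, the identifications $\tilde c=c$ and $\ee^{m\tilde c}=\ee^{mc}$ mean that only three OPEs need genuine checking: $\tilde c(z)\tilde d(w)$, $\tilde d(z)\ee^{m\tilde c}(w)$ and $\tilde d(z)\tilde d(w)$. For the first two, I would observe that each correction appearing in $\tilde d$—the Cartan piece $-(\kk+\dcox)\inner{\fwt{n}}{\fwt{n}} c+2\fwt{n}$ and the ghost sums $\Sigma_1,\Sigma_2$—has nonsingular OPE with $c(z)$ (using $c(z)c(w)\sim 0$, orthogonality of the Cartan and $\halflattice$ factors, and the fact that $c$ commutes with all ghosts and with $\ee^{-c}$) and with $\ee^{mc}(z)$ (since $\ee^{-c}(z)\ee^{mc}(w)$ is nonsingular and ghosts commute with $\ee^{mc}$). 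Only the $d$ summand contributes, and it reproduces the required half-lattice OPEs.

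The main calculation is $\tilde d(z)\tilde d(w)\sim 0$. Writing $\tilde d=d+L+\Sigma_1+\Sigma_2$ with $L=-(\kk+\dcox)\inner{\fwt{n}}{\fwt{n}} c+2\fwt{n}$, the ``Heisenberg block'' $(d+L)(z)(d+L)(w)$ cancels outright: $d\cdot d\sim 0$, and the double pole $\frac{4(\kk+\dcox)\inner{\fwt{n}}{\fwt{n}}\wun(w)}{(z-w)^2}$ arising from $4\fwt{n}\fwt{n}$ in $L\cdot L$ is killed by the $cd$-contractions in $d\cdot L+L\cdot d$. The cross terms $L\cdot\Sigma_i$ and $\Sigma_i\cdot L$ vanish since $L$ is orthogonal to the ghost factors and $\ee^{-c}$, and $d\cdot\Sigma_i+\Sigma_i\cdot d$ cancels by bosonic skew-symmetry: each $\Sigma_i$ is a $d_0$-eigenvector of eigenvalue $-2$, giving $d(z)\Sigma_i(w)\sim -2\Sigma_i(w)/(z-w)$ and $\Sigma_i(z)d(w)\sim +2\Sigma_i(w)/(z-w)$. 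The identity thus reduces to
\begin{equation*}
    \sum_{i,j\in\set{1,2}}\Sigma_i(z)\Sigma_j(w)\sim 0.
\end{equation*}
Here $\Sigma_1(z)\Sigma_1(w)\sim 0$ trivially since no $\beta$-$\gamma$ contractions are available between two $\no{\beta\beta\ee^{-c}}$ blocks. The hard part will be the combinatorial bookkeeping in $\Sigma_1\Sigma_2+\Sigma_2\Sigma_1+\Sigma_2\Sigma_2$, whose singular contributions arise from one or two $\beta$-$\gamma$ contractions accompanied by a surviving $\ee^{-2c}$ factor and nested sums over positive roots constrained by relations such as $\alpha+\alpha'=\hroot$ and $-\mu+\mu'+\mu''=\hroot$. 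I expect \cref{lem:InternalExposed} to be decisive here, forcing each surviving contraction to pair an exposed root with an internal one and killing many otherwise apparent terms; after reindexing, the remaining one-contraction pieces from $\Sigma_1\Sigma_2+\Sigma_2\Sigma_1$ and the two-contraction piece from $\Sigma_2\Sigma_2$ should cancel pairwise, closing the identity and establishing the proposition.
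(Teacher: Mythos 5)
Your overall strategy is the same as the paper's: reduce everything to checking that the tilded generators reproduce the defining OPEs, dispose of the Heisenberg factor by noting $c(z)\sroot{j}(w)\sim 0$ and $c(z)c(w)\sim 0$, and then concentrate on $\tilde d(z)\tilde d(w)\sim 0$. Your bookkeeping for the easy pieces is correct and matches the paper's computation: the double pole from $4\fwt{n}(z)\fwt{n}(w)$ is cancelled by the two $c$--$d$ contractions, and the cross terms $d(z)\Sigma_i(w)+\Sigma_i(z)d(w)$ cancel because $d_{(0)}$ acts on each $\no{\cdots\ee^{-c}}$ summand by the scalar $-2$ and there is no second-order pole.

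The one place where your proposal stops short is exactly the step that matters, and your predicted mechanism for it is not what happens. You expect the surviving products $\Sigma_1\Sigma_2+\Sigma_2\Sigma_1+\Sigma_2\Sigma_2$ to close via a reindexing and pairwise cancellation of one- and two-contraction terms. In fact there is nothing to cancel: a contraction $\beta_\alpha(z)\gamma_\lambda(w)$ is singular only when $\alpha=\lambda$, and \cref{lem:InternalExposed} says that every $\gamma_\alpha$ occurring in $\tilde d$ is labelled by an \emph{internal} root while every $\beta_\alpha$ is labelled by an \emph{exposed} root. Since no root is both internal and exposed, no $\beta$--$\gamma$ contraction between any two of the ghost sums is ever singular, so each product $\Sigma_i(z)\Sigma_j(w)$ vanishes term by term (the $\ee^{-c}(z)\ee^{-c}(w)$ factor is also nonsingular since $\inner{c}{c}=0$). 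This is precisely how the paper concludes. So your argument is salvageable with one sentence, but as written the decisive identity is asserted rather than proved, and the "pairwise cancellation after reindexing" you anticipate would send you looking for matched terms that do not exist.
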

\begin{proof}
     All the affine fields $\sroot{i}(z)$ have nonsingular operator product expansion with $c(z)$ so the fields $\set{\tsroot{i}(z)}$ have the same operator product expansions as $\set{\sroot{i}(z)}$. Since $\uaffvoa{\kk+\dcox}{\cartan}$ is simple and universal, we therefore have $\uaffvoa{\kk+\dcox}{\cartan} \cong \widetilde{\uaffvoa{\kk+\dcox}{\cartan}}$. For $\widetilde{\Pi}$, consider the map $\widetilde{\Pi} \rightarrow \Pi$ defined by taking a generating field in \eqref{eq:tildeHalfLat} to its untilded version. The operator product expansions satisfied by $\ee^{m\tilde{c}}(z)$ are clearly respected by this map, as are those satisfied by $\tilde{c}(z)$. What remains is to compute $\tilde{d}(z)\tilde{d}(w)$ and see that the map sends the result to the operator product $d(z)d(w) \sim 0$. In simpler terms, we must check that $\tilde{d}(z)\tilde{d}(w) \sim 0$. Now,
    \begin{align}
        \tilde{d}(z)\tilde{d}(w) \sim& \ - (\kk+\dcox)\inner{\fwt{n}}{\fwt{n}}d(z) c(w) - (\kk+\dcox)\inner{\fwt{n}}{\fwt{n}}c(z)d(w) + 4 \fwt{n}(z)\fwt{n}(w) \\
        &\hspace{2em} - d(z) \left( \sum_{\substack{\lambda,\lambda' \in \proots \setminus \hroot \\ \lambda+\lambda'=\hroot}} \no{\beta_\lambda (w) \beta_{\lambda'}(w)\ee^{-c}(w)} 
        +\sum_{\substack{\lambda,\lambda',\lambda'' \in \proots \setminus \hroot \\ -\lambda+\lambda'+\lambda''=\hroot}} \no{\gamma_\lambda(w) \beta_{\lambda'} (w) \beta_{\lambda''}(w)\ee^{-c}(w)}\right) \notag\\
        &\hspace{4em} -\left(\sum_{\substack{\alpha,\alpha' \in \proots \setminus \hroot \\ \alpha+\alpha'=\hroot}} \no{\beta_\alpha (z) \beta_{\alpha'}(z)\ee^{-c}(z)} 
        + \sum_{\substack{\alpha,\alpha',\alpha'' \in \proots \setminus \hroot \\ -\alpha+\alpha'+\alpha''=\hroot}} \no{\gamma_\alpha(z) \beta_{\alpha'} (z) \beta_{\alpha''}(z)\ee^{-c}(z)}\right)d(w) \notag\\
        &\hspace{6em} + \left( \sum_{\substack{\alpha,\alpha' \in \proots \setminus \hroot \\ \alpha+\alpha'=\hroot}} \no{\beta_\alpha (z) \beta_{\alpha'}(z)\ee^{-c}(z)} 
        + \sum_{\substack{\alpha,\alpha',\alpha'' \in \proots \setminus \hroot \\ -\alpha+\alpha'+\alpha''=\hroot}} \no{\gamma_\alpha(z) \beta_{\alpha'} (z) \beta_{\alpha''}(z)\ee^{-c}(z)}\right) \notag\\
        &\hspace{8em} \cdot\left( \sum_{\substack{\lambda,\lambda' \in \proots \setminus \hroot \\ \lambda+\lambda'=\hroot}} \no{\beta_\lambda (w) \beta_{\lambda'}(w)\ee^{-c}(w)} 
        +\sum_{\substack{\lambda,\lambda',\lambda'' \in \proots \setminus \hroot \\ -\lambda+\lambda'+\lambda''=\hroot}} \no{\gamma_\lambda(w) \beta_{\lambda'} (w) \beta_{\lambda''}(w)\ee^{-c}(w)}\right). \notag
    \end{align}
    
    The first three terms on the right hand side sum to zero, while those involving $d(z)$ and $d(w)$ cancel since $d(z) \ee^{-c}(w) \sim -\ee^{-c}(z)d(w)$. Therefore
    \begin{align} \label{eq:ddOPE}
        \tilde{d}(z)\tilde{d}(w) \sim& \sum_{\substack{\alpha,\alpha' \in \proots \setminus \hroot \\ \alpha+\alpha'=\hroot}} \no{\beta_\alpha (z) \beta_{\alpha'}(z)\ee^{-c}(z)} 
        \left(\sum_{\substack{\lambda,\lambda',\lambda'' \in \proots \setminus \hroot \\ -\lambda+\lambda'+\lambda''=\hroot}} \no{\gamma_\lambda(w) \beta_{\lambda'} (w) \beta_{\lambda''}(w)\ee^{-c}(w)}\right) \notag \\
        &\hspace{-4em}+  \sum_{\substack{\alpha,\alpha',\alpha'' \in \proots \setminus \hroot \\ -\alpha+\alpha'+\alpha''=\hroot}} \no{\gamma_\alpha(z) \beta_{\alpha'} (z) \beta_{\alpha''}(z)\ee^{-c}(z)} 
        \left( \sum_{\substack{\lambda,\lambda' \in \proots \setminus \hroot \\ \lambda+\lambda'=\hroot}} \no{\beta_\lambda (w) \beta_{\lambda'}(w)\ee^{-c}(w)} 
        +\sum_{\substack{\lambda,\lambda',\lambda'' \in \proots \setminus \hroot \\ -\lambda+\lambda'+\lambda''=\hroot}} \no{\gamma_\lambda(w) \beta_{\lambda'} (w) \beta_{\lambda''}(w)\ee^{-c}(w)}\right) \notag \\
        \sim& \left( \sum_{\substack{\alpha,\alpha',\alpha'' \in \proots \setminus \hroot \\ -\alpha+\alpha'+\alpha''=\hroot}} \no{\gamma_\alpha(z) \beta_{\alpha'} (z) \beta_{\alpha''}(z)\ee^{-c}(z)} \right) 
        \left(\sum_{\substack{\lambda,\lambda',\lambda'' \in \proots \setminus \hroot \\ -\lambda+\lambda'+\lambda''=\hroot}} \no{\gamma_\lambda(w) \beta_{\lambda'} (w) \beta_{\lambda''}(w)\ee^{-c}(w)}\right). \notag 
    \end{align}
    But the right hand side is zero since, by \cref{lem:InternalExposed}, the roots appearing in the $\gamma_\alpha(z)$'s are internal while those appearing in the $\beta_\alpha(z)$'s are exposed. Hence $\tilde{d}(z)\tilde{d}(w) \sim 0$ as required. Therefore, as $\Pi$ and $\widetilde{\Pi}$ are simple and universal, $\widetilde{\Pi} \cong \Pi$.
\end{proof}

\noindent By \cref{prop:tildeGhostOPEs} and \cref{prop:tildeHalfLatAffOPEs}, we have a number of subalgebras of $\uaffvoa{\kk+\dcox}{\cartan} \otimes \Pi \otimes \bigotimes_{\alpha \in \proots \setminus \hroot} \bgvoa_\alpha$, all of which are isomorphic to their untilded versions. It is then natural to ask whether
\begin{equation} \label{eq:FreeFieldIso}
    \uaffvoa{\kk+\dcox}{\cartan} \otimes \Pi  \otimes \bigotimes_{\alpha \in \proots \setminus \hroot} \bgvoa_\alpha= 
    \widetilde{\uaffvoa{\kk+\dcox}{\cartan}} \otimes \widetilde{\Pi} \otimes \bigotimes_{\alpha \in \proots \setminus \hroot} \widetilde{\bgvoa_\alpha}.
\end{equation}
This is the case if the tilded subalgebras are all mutually-orthogonal (in the sense that operator product expansions of fields in two different tilded subalgebras are nonsingular) and that the equations defining the tilded subalgebras are invertible.

\begin{proposition} \label{prop:FreeFieldIso}
Let $\kk$ be noncritical. Then,
    \begin{equation}
    \uaffvoa{\kk+\dcox}{\cartan} \otimes \Pi  \otimes \bigotimes_{\alpha \in \proots \setminus \hroot} \bgvoa_\alpha= 
    \widetilde{\uaffvoa{\kk+\dcox}{\cartan}} \otimes \widetilde{\Pi} \otimes \bigotimes_{\alpha \in \proots \setminus \hroot} \widetilde{\bgvoa_\alpha}.
\end{equation}
\end{proposition}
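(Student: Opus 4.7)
The plan is to reduce the claimed equality to two independent assertions: first, that pairs of distinct tilded subalgebras on the right-hand side have nonsingular OPEs between all of their strong generators (mutual orthogonality); second, that the tilded strong generators generate the full ambient vertex algebra (surjectivity). Combined with the isomorphisms $\widetilde{\uaffvoa{\kk+\dcox}{\cartan}} \cong \uaffvoa{\kk+\dcox}{\cartan}$, $\widetilde{\Pi} \cong \Pi$ and $\widetilde{\bgvoa_\alpha} \cong \bgvoa_\alpha$ already established in \cref{prop:tildeGhostOPEs,prop:tildeHalfLatAffOPEs}, these two facts produce the required identification of the ambient tensor product with the tilded tensor product on the right-hand side.

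For mutual orthogonality the checks split into three groups. Between $\widetilde{\uaffvoa{\kk+\dcox}{\cartan}}$ and $\widetilde{\Pi}$ the only nontrivial computation is $\tsroot{n}(z)\tilde{d}(w)$: the singular contribution $-(\kk+\dcox)c(z)d(w)\sim -2(\kk+\dcox)(z-w)^{-2}\wun(w)$ coming from the $-(\kk+\dcox)c(z)$ correction in $\tsroot{n}$ is exactly cancelled by $\sroot{n}(z)\cdot 2\fwt{n}(w)$, using $\inner{\sroot{n}}{\fwt{n}}=1$; the remaining ghost-bilinear corrections in $\tilde{d}$ have trivial OPE with $\sroot{n}(z)$ and $c(z)$. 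Between $\widetilde{\uaffvoa{\kk+\dcox}{\cartan}}$ and $\widetilde{\bgvoa_\alpha}$ all OPEs reduce to manifestly vanishing untilded ones. Between $\widetilde{\Pi}$ and $\widetilde{\bgvoa_\alpha}$, the computations of $\tilde{d}(z)\tilde{\beta}_\alpha(w)$ and $\tilde{d}(z)\tilde{\gamma}_\alpha(w)$ proceed in the same spirit as that of $\tilde{d}(z)\tilde{d}(w)$ in \cref{prop:tildeHalfLatAffOPEs}: each double (or triple) contraction is split into internal and exposed contributions using \cref{lem:InternalExposed}, and the correction terms built into $\tilde{d}$, $\tilde{\beta}_\alpha$ and $\tilde{\gamma}_\alpha$ are rigged to cancel them systematically. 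Orthogonality between $\widetilde{\bgvoa_\alpha}$ and $\widetilde{\bgvoa_\lambda}$ for $\alpha\neq\lambda$ is already covered by \cref{prop:tildeGhostOPEs}, whose proof in fact treats arbitrary pairs in $\proots \setminus \hroot$.

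For the generation claim I would invert the defining formulae \eqref{eq:tildeHalfLat} and \eqref{eq:tildeGhosts} iteratively. Each has the schematic form ``tilded generator equals untilded generator plus a normally-ordered product involving $\ee^{-c}$ and other ghost or Heisenberg fields'', so by induction on the number of $\ee^{-c}$ insertions every untilded strong generator lies in the vertex subalgebra generated by the tilded ones. Since the untilded generators strongly generate $\uaffvoa{\kk+\dcox}{\cartan} \otimes \Pi \otimes \bigotimes_{\alpha \in \proots \setminus \hroot} \bgvoa_\alpha$, the tilded ones do too, and combining with orthogonality yields the claimed equality. Noncriticality of $\kk$ enters here to ensure that $\uaffvoa{\kk+\dcox}{\cartan}$ is nondegenerate and that the untilded Heisenberg generators can be recovered unambiguously from their tilded counterparts.

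The main obstacle is the orthogonality check between $\widetilde{\Pi}$ and $\widetilde{\bgvoa_\alpha}$: several double and triple Wick contractions must be reorganised using the internal/exposed dichotomy of \cref{lem:InternalExposed} before the cancellations become apparent. Once this organisation is in place the calculation is a direct analogue of the $\tilde{d}(z)\tilde{d}(w)\sim 0$ computation already carried out in \cref{prop:tildeHalfLatAffOPEs}, and no essentially new ingredient is needed.
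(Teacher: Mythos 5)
Your proposal is correct and follows essentially the same route as the paper: mutual orthogonality of the tilded subalgebras (with the same pairwise case split, the same cancellation $\inner{\fwt{n}}{\sroot{i}}=\delta_{i,n}$ for the $\widetilde{\Pi}$--$\widetilde{\uaffvoa{\kk+\dcox}{\cartan}}$ check, and the internal/exposed dichotomy of \cref{lem:InternalExposed} driving the remaining contractions) together with invertibility of the defining formulae. The only cosmetic difference is in the inversion step: you propose an induction on the number of $\ee^{-c}$ insertions, whereas the paper observes via \cref{lem:InternalExposed} that the ghost fields appearing in the correction sums already coincide with their tilded versions, so the inversion as in \eqref{eq:untildedB} terminates in a single step.
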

\begin{proof}
\cref{prop:tildeGhostOPEs} implies that the subalgebras $\set{\widetilde{\bgvoa_\alpha}}_{\alpha \in \proots \setminus \hroot}$ are mutually orthogonal. It is clear that $ \bigotimes_{\alpha \in \proots \setminus \hroot} \widetilde{\bgvoa_\alpha}$ and $\widetilde{\uaffvoa{\kk+\dcox}{\cartan}}$ are orthogonal, while $\widetilde{\Pi}$ and $\widetilde{\uaffvoa{\kk+\dcox}{\cartan}}$ are orthogonal since
\begin{align}
    \tilde{d}(z)\tsroot{i}(w) &\sim (d(z) + 2 \fwt{n}(z))(\sroot{i}(w)-\delta_{i,n}(\kk+\dcox)c(w)) \\
    &\sim \frac{-2\delta_{i,n}(\kk+\dcox)\wun(w)}{(z-w)^2} + \frac{2(\kk+\dcox)\inner{\fwt{n}}{\sroot{i}}}{(z-w)^2} \notag \\
    &\sim 0.\notag 
\end{align}
To see that $\widetilde{\Pi}$ and $ \bigotimes_{\alpha \in \proots \setminus \hroot} \widetilde{\bgvoa_\alpha}$ are orthogonal, it suffices to check that $\tilde{d}(z) \tilde{\beta}_\alpha(w) \sim 0 \sim \tilde{d}(z) \tilde{\gamma}_\alpha(w)$ for all $\alpha \in \proots \setminus \hroot$. For this, observe that the $\beta_\alpha(z)$'s that appear in the expression for $\tilde{d}(z)$ in \eqref{eq:tildeHalfLat} all correspond to exposed $\alpha$'s, while the $\gamma_\alpha(z)$'s correspond to internal $\alpha$'s. So
\begin{align}
    \tilde{d}(z) \tilde{\beta}_\alpha(w) &\sim
    - \frac{1}{2}\sum_{\substack{\alpha',\alpha'' \in \proots \setminus \hroot \\ \alpha'+\alpha''=\hroot+\alpha}} d(z)\no{\beta_{\alpha'} (w) \beta_{\alpha''}(w)\ee^{-c}(w)}
    - \sum_{\substack{\lambda,\lambda',\lambda'' \in \proots \setminus \hroot \\ -\lambda+\lambda'+\lambda''=\hroot}} \no{\gamma_\lambda(z) \beta_{\lambda'} (z) \beta_{\lambda''}(z)\ee^{-c}(z)} \beta_\alpha(w), \\ &\sim
    \sum_{\substack{\alpha',\alpha'' \in \proots \setminus \hroot \\ \alpha'+\alpha''=\hroot+\alpha}} \no{\beta_{\alpha'} (w) \beta_{\alpha''}(w)\ee^{-c}(w)}(z-w)^{-1} 
     - \sum_{\substack{\lambda',\lambda'' \in \proots \setminus \hroot \\ -\alpha+\lambda'+\lambda''=\hroot}} \no{\beta_{\lambda'} (z) \beta_{\lambda''}(z)\ee^{-c}(z)}(z-w)^{-1} \notag\\&\sim 0. \notag
\end{align}
Similarly, 
\begin{align}
    \tilde{d}(z) \tilde{\gamma}_\alpha(w) &\sim d(z) \left(\sum_{\substack{\alpha' \in \proots \setminus \hroot \\ \alpha'=\hroot-\alpha}}\no{\beta_{\alpha'}(z)\ee^{-c}(z)}
    + \sum_{\substack{\alpha'',\alpha''' \in \proots \setminus \hroot \\ -\alpha''+\alpha'''=\hroot-\alpha}}\no{\gamma_{\alpha''}(z)\beta_{\alpha'''}(z)\ee^{-c}(z)} \right) \\
    &\hspace{2em} -\left( \sum_{\substack{\lambda,\lambda' \in \proots \setminus \hroot \\ \lambda+\lambda'=\hroot}} \no{\beta_\lambda (z) \beta_{\lambda'}(z)\ee^{-c}(z)} 
    +\sum_{\substack{\lambda,\lambda',\lambda'' \in \proots \setminus \hroot \\ -\lambda+\lambda'+\lambda''=\hroot}} \no{\gamma_\lambda(z) \beta_{\lambda'} (z) \beta_{\lambda''}(z)\ee^{-c}(z)}\right) \gamma_\alpha(w) \notag\\
    &\sim 2 \left(\sum_{\substack{\alpha' \in \proots \setminus \hroot \\ \alpha'=\hroot-\alpha}}\no{\beta_{\alpha'}(z)\ee^{-c}(z)}
    + \sum_{\substack{\alpha'',\alpha''' \in \proots \setminus \hroot \\ -\alpha''+\alpha'''=\hroot-\alpha}}\no{\gamma_{\alpha''}(z)\beta_{\alpha'''}(z)\ee^{-c}(z)} \right)(z-w)^{-1} \notag\\
    &\hspace{2em} -\left( \sum_{\substack{\lambda' \in \proots \setminus \hroot \\ \alpha+\lambda'=\hroot}} \no{ \beta_{\lambda'}(z)\ee^{-c}(z)} 
    +\sum_{\substack{\lambda,\lambda'' \in \proots \setminus \hroot \\ -\lambda+\alpha+\lambda''=\hroot}} \no{\gamma_\lambda(z)  \beta_{\lambda''}(z)\ee^{-c}(z)}\right) (z-w)^{-1}
    \notag\\
    &\hspace{3em} -\left( \sum_{\substack{\lambda \in \proots \setminus \hroot \\ \lambda+\alpha=\hroot}} \no{\beta_\lambda (z) (z)\ee^{-c}(z)} 
    +\sum_{\substack{\lambda,\lambda' \in \proots \setminus \hroot \\ -\lambda+\lambda'+\alpha=\hroot}} \no{\gamma_\lambda(z) \beta_{\lambda'} (z) \ee^{-c}(z)}\right) (z-w)^{-1} \notag\\
    &\sim 0. \notag
\end{align}
So all the tilded subalgebras are mutually orthogonal. 

For invertibility, expressing $\sroot{i}(z)$, $c(z)$ and $\ee^{mc}(z)$ in terms of tilded fields is straightforward. For $\beta_\alpha(z)$ and $\gamma_\alpha(z)$, recall from \cref{lem:InternalExposed} that the ghost fields appearing in the summands of \eqref{eq:tildeGhosts} are all equal to their tilded versions. So, we can write
\begin{equation} \label{eq:untildedB}
    \beta_\alpha(z) = 
    \tilde{\beta}_\alpha(z) + \frac{1}{2}\sum_{\substack{\alpha',\alpha'' \in \proots \setminus \hroot \\ \alpha'+\alpha''=\hroot+\alpha}} \no{\beta_{\alpha'} (z) \beta_{\alpha''}(z)\ee^{-c}(z)} =
    \tilde{\beta}_\alpha(z) + \frac{1}{2}\sum_{\substack{\alpha',\alpha'' \in \proots \setminus \hroot \\ \alpha'+\alpha''=\hroot+\alpha}} \no{\tilde{\beta}_{\alpha'} (z) \tilde{\beta}_{\alpha''}(z)\ee^{-c}(z)}
\end{equation}
and likewise for $\gamma_\alpha(z)$ and $d(z)$.
\end{proof}

The question of which subalgebra of $\uaffvoa{\kk+\dcox}{\cartan} \otimes \Pi \otimes \bigotimes_{\alpha \in \proots \setminus \hroot} \bgvoa_\alpha$ the first $n$ screening operators in \eqref{eq:MinToAffIntermediate} act on becomes significantly easier by reframing everything in terms of tilded fields. The key question is then, by \cref{prop:FreeFieldIso}, which subalgebra of $\widetilde{\uaffvoa{\kk+\dcox}{\cartan}} \otimes \widetilde{\Pi} \otimes \bigotimes_{\alpha \in \proots \setminus \hroot} \widetilde{\bgvoa_\alpha}$ the first $n$ screening operators in \eqref{eq:MinToAffIntermediate} act on. In order to answer this, we have the following convenient result:

\begin{proposition}
    For all $i \in \set{1,\dots,n-1}$, 
    \begin{align}
        S_i(z) =& \no{\left( \beta_i(z) + \sum_{j=1}^{i-1} \gamma_{i-j,i-1}(z) \beta_{i-j,i}(z)  \right) \ee^{\frac{-1}{\kk+n+1} \alpha_i}(z) } \\
        =& \no{\left( \tilde{\beta}_i(z) + \sum_{j=1}^{i-1} \tilde{\gamma}_{i-j,i-1}(z) \tilde{\beta}_{i-j,i}(z)  \right) \ee^{\frac{-1}{\kk+n+1} \tilde{\alpha}_i}(z) } = \tilde{S}_i(z). \notag
    \end{align}
\end{proposition}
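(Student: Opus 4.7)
The plan is to split $S_i(z)$ into the vertex-operator factor $\ee^{-\sroot{i}/(\kk+\dcox)}(z)$ and the ghost-polynomial factor inside the parentheses, and treat these two factors separately. Because $i \leq n-1$, the definition $\tsroot{i}(z) = \sroot{i}(z) - \delta_{i,n}(\kk+\dcox)c(z)$ gives $\tsroot{i} = \sroot{i}$, so the two vertex-operator factors on the left- and right-hand sides of the claimed identity are manifestly equal. What remains is the purely ghost identity
\[
\tilde{\beta}_i(z) + \sum_{j=1}^{i-1} \no{\tilde{\gamma}_{i-j,i-1}(z)\, \tilde{\beta}_{i-j,i}(z)} = \beta_i(z) + \sum_{j=1}^{i-1} \no{\gamma_{i-j,i-1}(z)\, \beta_{i-j,i}(z)}.
\]

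To prove this, my first step is to catalogue which of the roots appearing above are internal and which are exposed. For $i \leq n-1$, the diagonal root $\sroot{i}$ is internal when $1 < i < n$ and exposed when $i=1$; among $\sroot{i-j,i-1}$ and $\sroot{i-j,i}$ for $j = 1, \ldots, i-1$, the only exposed ones occur at the boundary $j = i-1$, namely $\sroot{1,i-1}$ and $\sroot{1,i}$, while every interior $j$ gives internal roots. Using \cref{lem:InternalExposed} together with \eqref{eq:tildeBsimpler} and the displayed expression for $\tilde{\gamma}_{1,j}$, each tilded field can be written as its untilded counterpart plus an explicit correction, and all corrections are proportional to $\ee^{-c}$ and supported in the ghost factors attached to the boundary roots $\sroot{1,\bullet}$ and $\sroot{\bullet,n}$.

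Substituting these expansions into the tilded sum produces, in addition to the desired untilded sum, four groups of correction terms: one from $\tilde{\beta}_i$ of the form $-\no{\beta_{1,i}\beta_{i,n}\ee^{-c}}$; an interior sum $-\no{\beta_{1,i}\sum_{j=1}^{i-2}\gamma_{i-j,i-1}\beta_{i-j,n}\ee^{-c}}$ from the corrections to $\tilde{\beta}_{i-j,i}$ for $j < i-1$; and two terms from the boundary contribution $\tilde{\gamma}_{1,i-1}\tilde{\beta}_{1,i}$, one for each piece of the expansion of $\tilde{\gamma}_{1,i-1}$. A direct reindexing $k = i-j$ shows that these four groups pair off and cancel exactly, leaving the right-hand side. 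The case $i = 1$ is trivial because $\sroot{1}$ is exposed, so $\tilde{\beta}_1 = \beta_1$ and the sum is empty.

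The main obstacle is purely combinatorial: keeping track of which indices fall on which side of the internal/exposed dichotomy, and handling the boundary term $j = i-1$ separately from the interior. No analytic subtleties arise, since all ghost fields entering the corrections are mutually local with each other and with $\ee^{\pm c}$, so normal orderings may be rearranged and sums reindexed freely.
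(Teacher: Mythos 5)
Your proposal is correct and follows essentially the same route as the paper's proof: both reduce to the ghost identity (the vertex-operator factors agreeing since $\tsroot{i}=\sroot{i}$ for $i<n$), catalogue which roots are internal versus exposed, substitute the explicit tilde formulas, and cancel the $\ee^{-c}$ correction terms via the reindexing $k=i-j$, with $i=1$ (and implicitly $i=2$) handled by empty sums. The only cosmetic difference is that you expand tilded fields in terms of untilded ones while the paper substitutes in the opposite direction, which changes nothing.
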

\begin{proof}
    Consider first the $i=1$ case. Then $S_1(z)=\no{\beta_1(z)\ee^{\frac{-1}{\kk+n+1} \alpha_1}(z) } = \no{\tilde{\beta}_1(z)\ee^{\frac{-1}{\kk+n+1} \tilde{\alpha}_1}(z) = \tilde{S}_1(z)}$.
    Now let $1<i<n-1$. Then $\sroot{i}$ is an internal root and, using \eqref{eq:untildedB},
    \begin{align}
        S_i(z) &= \no{\left( \beta_i(z) + \sum_{j=1}^{i-1} \gamma_{i-j,i-1}(z) \beta_{i-j,i}(z)  \right) \ee^{\frac{-1}{\kk+n+1} \alpha_i}(z) } \\
        &= \no{\left( \tilde{\beta}_i(z)+\no{\tilde{\beta}_{1,i} (z) \tilde{\beta}_{i,n}(z)\ee^{-c}(z)} + \sum_{j=1}^{i-1} \gamma_{i-j,i-1}(z) \beta_{i-j,i}(z)  \right) \ee^{\frac{-1}{\kk+n+1} \alpha_i}(z) }. \notag
    \end{align}
    The $j=i-1$ term in the sum is
    \begin{equation}
        \no{\gamma_{1,i-1}(z) \beta_{1,i}(z)} = \no{\left(\tilde{\gamma}_{1,i-1}(z)-\tilde{\beta}_{i,n}(z)\ee^{-c}(z)
        - \sum_{k=2}^{i-1} \tilde{\gamma}_{k,i-1}(z)\tilde{\beta}_{k,n}(z)\ee^{-c}(z)\right) \tilde{\beta}_{1,i}(z)},
    \end{equation}
    where the sum is empty when $i=2$. In that case, it is clear that $S_2(z)=\tilde{S}_2(z)$. For $i>2$, the $j<i-1$ term in the sum is
    \begin{equation}
        \no{\gamma_{i-j,i-1}(z) \beta_{i-j,i}(z)} = \no{\tilde{\gamma}_{i-j,i-1}(z) \left( \tilde{\beta}_{i-j,i}(z)+ \tilde{\beta}_{1,i} (z) \tilde{\beta}_{i-j,n}(z)\ee^{-c}(z)\right)}.
    \end{equation}
    
    Adding all the contributions together and using that $\tilde{\alpha}_i(z) = \alpha_i(z)$ for $i<n$, we obtain
    \begin{align}
        S_i(z) =&\ \no{\left( \tilde{\beta}_i(z)+\no{\tilde{\beta}_{1,i} (z) \tilde{\beta}_{i,n}(z)\ee^{-c}(z)}\right) \ee^{\frac{-1}{\kk+n+1} \alpha_i}(z) }\\
        &+  \no{\left(\tilde{\gamma}_{1,i-1}(z)\tilde{\beta}_{1,i}(z) -\tilde{\beta}_{1,i}(z) \tilde{\beta}_{i,n}(z)\ee^{-c}(z)
        - \sum_{k=2}^{i-1} \tilde{\gamma}_{k,i-1}(z)\tilde{\beta}_{1,i}(z) \tilde{\beta}_{k,n}(z)\ee^{-c}(z) \right) \ee^{\frac{-1}{\kk+n+1} \alpha_i}(z) }\notag\\
        &+ \sum_{j=1}^{i-2} \no{ \left( \tilde{\gamma}_{i-j,i-1}(z) \tilde{\beta}_{i-j,i}(z)+ \tilde{\gamma}_{i-j,i-1}(z)\tilde{\beta}_{1,i} (z) \tilde{\beta}_{i-j,n}(z)\ee^{-c}(z) \right) \ee^{\frac{-1}{\kk+n+1} \alpha_i}(z) }\notag\\
        =&\ \no{\left(\tilde{\beta}_i(z)+\tilde{\gamma}_{1,i-1}(z)\tilde{\beta}_{1,i}(z)+ \sum_{j=1}^{i-2} \tilde{\gamma}_{i-j,i-1}(z) \tilde{\beta}_{i-j,i}(z)\right) \ee^{\frac{-1}{\kk+n+1} \alpha_i}(z) }\notag\\
        &-  \no{\left( \sum_{k=2}^{i-1} \tilde{\gamma}_{k,i-1}(z)\tilde{\beta}_{1,i}(z) \tilde{\beta}_{k,n}(z)\ee^{-c}(z) \right) \ee^{\frac{-1}{\kk+n+1} \alpha_i}(z) }
        + \sum_{j=1}^{i-2} \no{ \left(  \tilde{\gamma}_{i-j,i-1}(z)\tilde{\beta}_{1,i} (z) \tilde{\beta}_{i-j,n}(z)\ee^{-c}(z) \right) \ee^{\frac{-1}{\kk+n+1} \alpha_i}(z) }\notag\\
        =&\ \no{\left(\tilde{\beta}_i(z)+ \sum_{j=1}^{i-1} \tilde{\gamma}_{i-j,i-1}(z) \tilde{\beta}_{i-j,i}(z)\right) \ee^{\frac{-1}{\kk+n+1} \tilde{\alpha}_i}(z) }\notag\\
        =&\ \tilde{S}_i(z). \notag
    \end{align}
    for $i>2$.
\end{proof}

\begin{corollary}
The screening operators $\int S_i(z) \dd z =\int \tilde{S}_i(z) \dd z $ for $i<n$ and $\int \no{\tilde{\gamma}_{1,n-1}(z) \ee^{\frac{-1}{\kk+\dcox} \tsroot{n}}(z)} \dd z $ act as zero on the subalgebra
\begin{equation}
    \widetilde{\Pi} \otimes \bigotimes_{i=2}^{n} \widetilde{\bgvoa_{\sroot{i,n}}} \subset \uaffvoa{\kk+\dcox}{\cartan} \otimes \Pi \otimes \bigotimes_{\alpha \in \proots \setminus \hroot} \bgvoa_\alpha = \left( \widetilde{\uaffvoa{\kk+\dcox}{\cartan}} \otimes \bigotimes_{j=1}^{n-1}\bigotimes_{i=1}^{j} \widetilde{\bgvoa_{\sroot{i,j}}} \right) \otimes \left( \widetilde{\Pi} \otimes \bigotimes_{i=2}^{n} \widetilde{\bgvoa_{\sroot{i,n}}} \right).
\end{equation}
\end{corollary}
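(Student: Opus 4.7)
The plan is to leverage the tensor-product decomposition established in \cref{prop:FreeFieldIso}, which writes the ambient free-field algebra as $A_1\otimes A_2$ with
\begin{equation*}
A_1 = \widetilde{\uaffvoa{\kk+\dcox}{\cartan}} \otimes \bigotimes_{j=1}^{n-1}\bigotimes_{i=1}^{j} \widetilde{\bgvoa_{\sroot{i,j}}}, \qquad A_2 = \widetilde{\Pi} \otimes \bigotimes_{i=2}^{n} \widetilde{\bgvoa_{\sroot{i,n}}}.
\end{equation*}
The mutual orthogonality of $A_1$ and $A_2$ established there guarantees that any field of $A_1$ has nonsingular OPE with every field of $A_2$. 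Since the action of a screening operator $\int Q(z)\,\dd z$ on a vector $v \in A_2$ is extracted from the singular part of the OPE of $Q(z)$ with the field corresponding to $v$, it will suffice to exhibit each listed screening field as an element of $A_1$.

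For $i<n$, the preceding proposition already rewrites $S_i(z)=\tilde{S}_i(z)$ entirely in terms of tilded generators, so I would walk through these generators one by one. The ghost fields $\tilde{\beta}_{\sroot{i,i}}$, $\tilde{\gamma}_{\sroot{i-j,i-1}}$ and $\tilde{\beta}_{\sroot{i-j,i}}$ (for $j=1,\dots,i-1$) that appear have indexing roots whose second index is at most $n-1$, so each lies in one of the factors $\widetilde{\bgvoa_{\sroot{i,j}}}$ belonging to $A_1$. Moreover $\tilde{\alpha}_i = \alpha_i$ for $i<n$ is a generator of $\widetilde{\uaffvoa{\kk+\dcox}{\cartan}}\subset A_1$, so $\ee^{-\frac{1}{\kk+n+1}\tilde{\alpha}_i}(z)$ is a natural vertex operator attached to this Heisenberg factor. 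For the remaining screening field $\no{\tilde{\gamma}_{1,n-1}(z)\ee^{-\frac{1}{\kk+\dcox}\tsroot{n}}(z)}$, the ghost $\tilde{\gamma}_{1,n-1}$ lies in $\widetilde{\bgvoa_{\sroot{1,n-1}}}\subset A_1$, and by construction $\tsroot{n}$ is a generator of $\widetilde{\uaffvoa{\kk+\dcox}{\cartan}}$, so the vertex operator built from it is likewise attached to $A_1$.

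The main point to treat carefully, and the only real subtlety in this otherwise structural argument, is the vertex operator $\ee^{-\frac{1}{\kk+\dcox}\tsroot{n}}(z)$: although $\tsroot{n} = \sroot{n} - (\kk+\dcox)c$ apparently mixes the original Heisenberg and half-lattice generators, the orthogonality calculations inside the proof of \cref{prop:FreeFieldIso}, in particular $\tilde{d}(z)\tsroot{n}(w)\sim 0$, confirm that $\tsroot{n}$ is genuinely a generator of the first tensor factor $A_1$ so that its vertex operator inherits nonsingular OPE with all of $A_2$. Assembling the containments and orthogonality above, every screening field of interest has nonsingular OPE with every field of $A_2$, and the corresponding residues therefore annihilate all of $A_2$, as claimed.
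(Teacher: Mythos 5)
Your proposal is correct and follows exactly the route the paper intends: the corollary is stated without proof precisely because it is the combination of \cref{prop:FreeFieldIso} (mutual orthogonality of the tilded factors) with the preceding proposition ($S_i=\tilde{S}_i$ for $i<n$), plus the observation that every field appearing in the rewritten screening fields — including the vertex operator $\ee^{\frac{-1}{\kk+\dcox}\tsroot{n}}(z)$, whose exponent is orthogonal to the generators of $\widetilde{\Pi}$ — lives in the first tensor factor. Your explicit check that all ghost indices have second index at most $n-1$, and your flagging of the vertex-operator subtlety, fill in the details the paper leaves implicit.
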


To summarise what we have found thus far, the Wakimoto realisation of $\uaffvoa{\kk}{\slnpone}$ is an embedding of $\uaffvoa{\kk}{\slnpone}$ into $\uaffvoa{\kk+\dcox}{\cartan} \otimes \bigotimes_{\alpha \in \proots} \bgvoa_\alpha$. Composing with the bosonisation of $\bgvoa_\theta$ defines an embedding into $\uaffvoa{\kk+\dcox}{\cartan} \otimes \Pi \otimes \bigotimes_{\alpha \in \proots \setminus \theta} \bgvoa_\alpha$ which by \cref{prop:FreeFieldIso} is equal to $\widetilde{\uaffvoa{\kk+\dcox}{\cartan}} \otimes \widetilde{\Pi} \otimes \bigotimes_{\alpha \in \proots \setminus \hroot} \widetilde{\bgvoa_\alpha}$. 

For generic $\kk$, the embedding $\uaffvoa{\kk}{\slnpone} \hookrightarrow \widetilde{\uaffvoa{\kk+\dcox}{\cartan}} \otimes \widetilde{\Pi} \otimes \bigotimes_{\alpha \in \proots \setminus \hroot} \widetilde{\bgvoa_\alpha}$ is described by screening operators according to
\begin{equation}
     \uaffvoa{\kk}{\slnpone} \cong \left( \bigcap_{i=1}^{n-1} \textup{ker} \int \tilde{S}_i(z) \ \dd z \right) \cap \textup{ker} \int \no{\tilde{\gamma}_{1,n-1}(z) \ee^{\frac{-1}{\kk+\dcox} \tsroot{n}}(z)} \ \dd z \cap  \textup{ker} \int \ee^{\frac{1}{2}c + \frac{1}{2}d}(z) \ dz,
\end{equation}
where the first $n$ screening operators only act nontrivially on $\widetilde{\uaffvoa{\kk+\dcox}{\cartan}} \otimes\bigotimes_{j=1}^{n-1}\bigotimes_{i=1}^{j} \widetilde{\bgvoa_{\sroot{i,j}}}$. This describes precisely the Wakimoto realisation of $\uQHR{\kk}{\slnpone}{\fmin}$ as per \eqref{eq:WakAffandMin} using the isomorphism between $\widetilde{\uaffvoa{\kk+\dcox}{\cartan}} \otimes \bigotimes_{j=1}^{n-1}\bigotimes_{i=1}^{j} \widetilde{\bgvoa_{\sroot{i,j}}}$ and $\uaffvoa{\kk+\dcox}{\cartan} \otimes \bigotimes_{j=1}^{n-1}\bigotimes_{i=1}^{j} \bgvoa_{\sroot{i,j}}$ defined by removing the tildes on all fields. Thus the fields in $\widetilde{\uaffvoa{\kk+\dcox}{\cartan}} \otimes \bigotimes_{j=1}^{n-1}\bigotimes_{i=1}^{j} \widetilde{\bgvoa_{\sroot{i,j}}}$ that appear in the Wakimoto realisation of $\uaffvoa{\kk}{\slnpone}$ can be identified with fields in $\uQHR{\kk}{\slnpone}{\fmin}$.

\begin{theorem} \label{thm:mintoaff}
Let $\kk$ be generic. There exists an embedding 
\begin{equation}
    \uaffvoa{\kk}{\slnpone} \hookrightarrow \uQHR{\kk}{\slnpone}{\fmin} \otimes \Pi \otimes \bigotimes_{i=2}^{n} \bgvoa_{\sroot{i,n}}
\end{equation}
whose image is specified by
\begin{equation}
    \uaffvoa{\kk}{\slnpone} \cong \textup{\normalfont ker} \int  \ee^{A}(z) \ \dd z,
\end{equation}
where
\begin{align} \label{eq:finalscreening}
     A(z) = &\ \frac{1}{2}\left(1-(\kk+\dcox)\inner{\fwt{n}}{\fwt{n}} \right) c(z) + \frac{1}{2}d(z)  - \omega_n(z) \\
    &+ \frac{1}{2} \sum_{\substack{\alpha,\alpha' \in \proots \setminus \hroot \\ \alpha+\alpha'=\hroot}} \no{\beta_\alpha (z) \beta_{\alpha'}(z)\ee^{-c}(z)} 
    +\frac{1}{2} \sum_{\substack{\alpha,\alpha',\alpha'' \in \proots \setminus \hroot \\ -\alpha+\alpha'+\alpha''=\hroot}} \no{\gamma_\alpha(z) \beta_{\alpha'} (z) \beta_{\alpha''}(z)\ee^{-c}(z)}. \notag
\end{align}
Here, the fields $\omega_n(z)$, $\beta_\alpha(z)$ and $\gamma_\alpha(z)$, for $\alpha =\sroot{i,j}$ with $ j \neq n$, act on $\uQHR{\kk}{\slnpone}{\fmin}$ via the Wakimoto realisation of $\uQHR{\kk}{\slnpone}{\fmin}$.
\end{theorem}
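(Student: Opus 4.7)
The plan is to assemble the free-field descriptions already built up in this section. The starting point is the bosonised description \eqref{eq:MinToAffIntermediate} obtained by composing the Wakimoto realisation of $\uaffvoa{\kk}{\slnpone}$ with the FMS bosonisation of the ghost system $\bgvoa_\theta$ attached to the highest root: for generic $\kk$, $\uaffvoa{\kk}{\slnpone}$ is the intersection of the kernels of the $n-1$ screenings $\int S_i(z)\,\dd z$ for $i<n$, the screening $\int \no{\tilde{\gamma}_{1,n-1}(z) \ee^{\frac{-1}{\kk+\dcox} \tsroot{n}}(z)}\,\dd z$ replacing $\int S_n(z)\,\dd z$, and the bosonisation screening $\int \ee^{\frac{1}{2}c + \frac{1}{2}d}(z)\,\dd z$, all acting on $\uaffvoa{\kk+\dcox}{\cartan} \otimes \Pi \otimes \bigotimes_{\alpha \in \proots \setminus \hroot}\bgvoa_\alpha$.

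The key step is to reinterpret the first $n$ of these screenings as the Wakimoto screenings of $\uQHR{\kk}{\slnpone}{\fmin}$. The identity $S_i=\tilde S_i$ for $i<n$ proved just above the theorem, together with the observation that $\no{\tilde{\gamma}_{1,n-1}(z) \ee^{\frac{-1}{\kk+\dcox} \tsroot{n}}(z)}$ is precisely the $n$th screening of $\uQHR{\kk}{\slnpone}{\bar{f}^{(n)}}$ computed at the end of \cref{subsec:WakWalg}, shows that all $n$ of these screenings depend only on the tilded subalgebra $\widetilde{\uaffvoa{\kk+\dcox}{\cartan}} \otimes \bigotimes_{j=1}^{n-1}\bigotimes_{i=1}^{j}\widetilde{\bgvoa_{\sroot{i,j}}}$. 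Combining this with \cref{prop:FreeFieldIso} (which factorises the ambient algebra into tilded subalgebras and allows us to identify $\widetilde{\Pi}\otimes\bigotimes_{i=2}^{n}\widetilde{\bgvoa_{\sroot{i,n}}}$ with $\Pi \otimes \bigotimes_{i=2}^{n}\bgvoa_{\sroot{i,n}}$ via removal of tildes, as justified by \cref{prop:tildeGhostOPEs,prop:tildeHalfLatAffOPEs}) and the Wakimoto description \eqref{eq:WakAffandMin} of $\uQHR{\kk}{\slnpone}{\fmin}$ then identifies the joint kernel of the first $n$ screenings with $\uQHR{\kk}{\slnpone}{\fmin}$, producing the embedding $\uaffvoa{\kk}{\slnpone} \hookrightarrow \uQHR{\kk}{\slnpone}{\fmin} \otimes \Pi \otimes \bigotimes_{i=2}^{n}\bgvoa_{\sroot{i,n}}$ whose image is cut out by the remaining screening $\int \ee^{\frac{1}{2}c+\frac{1}{2}d}(z)\,\dd z$.

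To finish, one must rewrite this last screening as $\int \ee^{A(z)}\,\dd z$ for $A(z)$ as in \eqref{eq:finalscreening}. Inverting the defining relation \eqref{eq:tildeHalfLat} gives $d$ as $\tilde d$ plus a shift in $c$ and $\fwt n$ and quadratic/cubic corrections in the ambient ghosts; using \eqref{eq:untildedB} to re-express these corrections in terms of tilded ghosts and then relabelling tilded fields as their untilded counterparts in the target produces the shifted $c$ coefficient, the $-\fwt n$ contribution, and the two ghost sums in \eqref{eq:finalscreening}. The main obstacle, in my view, is not conceptual but computational: verifying that the substitution reproduces \eqref{eq:finalscreening} exactly, with the correct coefficients and signs, and checking that the surviving screening restricts to a genuine screening on the target (which follows from the mutual commutativity of the screenings in the bosonised Wakimoto complex, but deserves explicit comment). \Cref{lem:InternalExposed} is the main tool that keeps this bookkeeping tractable, since it pins down which exposed/internal contributions actually survive at each stage.
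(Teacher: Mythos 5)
Your proposal is correct and follows essentially the same route as the paper: bosonise $\bgvoa_\hroot$, pass to the tilded generators, observe via $S_i=\tilde S_i$ and \cref{prop:FreeFieldIso} that the first $n$ screenings cut out exactly the Wakimoto image of $\uQHR{\kk}{\slnpone}{\fmin}$ inside the tilded factor, and recover $A(z)$ by rewriting $\tfrac12 c+\tfrac12 d$ in tilded fields. The only detail the paper makes explicit that you leave implicit is the decomposition $F(z)=\sum_m \tilde A_m(z)\otimes\tilde B_m(z)$ with linearly independent $\tilde B_m$, which is the standard justification for why the joint kernel of screenings acting only on the first tensor factor is the kernel there tensored with the full second factor.
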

\begin{proof}
The proof here is essentially the same as that of \cite[Thm.~3.3]{Feh21c}. We reproduce it here for completeness. 

Using composition of the Wakimoto realisation and FMS bosonisation of $\bgvoa_{\sroot{1,n}}$, and rewriting the result in terms of tilded fields, we can decompose the fields of $\uaffvoa{\kk}{\slnpone}$ as 
\begin{equation}
    F(z) = \sum_m \tilde{A}_m(z) \otimes \tilde{B}_m(z), \qquad 
    \tilde{A}_m(z) \in \widetilde{\uaffvoa{\kk+\dcox}{\cartan}} \otimes \bigotimes_{j=1}^{n-1}\bigotimes_{i=1}^{j} \widetilde{\bgvoa_{\sroot{i,j}}} \ \text{  and  } \
    \tilde{B}_m(z) \in  \widetilde{\Pi} \otimes  \bigotimes_{i=2}^{n} \widetilde{\bgvoa_{\sroot{i,n}}}.
\end{equation}
We may assume that the fields $\tilde{B}_m(z)$ are linearly independent for if they are not, then we can simply redefine the fields $\tilde{A}_m(z)$ and reduce the range of $m$ such that this is the case. 

The screening operators
\begin{equation}
    \int \tilde{S}_i(z) \ \dd z, \qquad \int \no{\tilde{\gamma}_{1,n-1}(z) \ee^{\frac{-1}{\kk+\dcox} \tsroot{n}}(z)} \ \dd z
\end{equation}
for $i =1, \dots, n-1$ act trivially on $\tilde{B}_m(w)$. So if $F(z)$ is in the intersection of their kernels, we must have that 

\begin{equation} \label{eq:findingreg}
   \int \tilde{S}_i(z) \tilde{A}_m(w)\ \dd z=0, \qquad \int \no{\tilde{\gamma}_{1,n-1}(z) \ee^{\frac{-1}{\kk+\dcox} \tsroot{n}}(z)} \tilde{A}_m(w) \ \dd z=0
\end{equation}
for all $i = 1,\dots, n-1$ and $m$ by the linear independence of the fields $\tilde{B}_m(z)$. Therefore, since $F(z) \in \uaffvoa{\kk}{\slnpone}$,
\begin{equation}
     \tilde{A}_m(z) \in \left( \bigcap_{i=1}^{n-1} \textup{ ker} \int \tilde{S}_i(z) \ \dd z \right) \cap \textup{ker} \int \no{\tilde{\gamma}_{1,n-1}(z) \ee^{\frac{-1}{\kk+\dcox} \tsroot{n}}(z)} \ \dd z  \quad \text{for all } m.
\end{equation}

The right hand side is equal to the image of the composition of the Wakimoto realisation of $\uQHR{\kk}{\slnpone}{\fmin}$ from \eqref{eq:WakAffandMin} with the isomorphisms $ \widetilde{\uaffvoa{\kk+\dcox}{\cartan}} \cong  \uaffvoa{\kk+\dcox}{\cartan}$ and $\widetilde{\bgvoa_{\sroot{i,j}}} \cong \bgvoa_{\sroot{i,j}}$. Hence we may treat the fields $\tilde{A}_m(z)$ as fields in $\uQHR{\kk}{\slnpone}{\fmin}$. 

The screening operator exponent \eqref{eq:finalscreening} is obtained by rewriting the FMS bosonisation screening field $\ee^{\frac{1}{2}c + \frac{1}{2}d}(z)$ using the definitions of the tilded fields. That this defines an embedding $\uaffvoa{\kk}{\slnpone} \hookrightarrow \uQHR{\kk}{\slnpone}{\fmin}\otimes \Pi \otimes \bigotimes_{i=2}^{n} \bgvoa_{\sroot{i,n}} $ follows from the fact that it is obtained by writing the embedded image of $\uaffvoa{\kk}{\slnpone}$ in terms of the embedded image of $\uQHR{\kk}{\slnpone}{\fmin}$ and other fields.
\end{proof}

\subsection{From hook-type to hook-type} \label{subsec:HooktoHook}
Recall from \cref{subsec:WakWalg} the two Wakimoto realisations of hook-type $\slnpone$ W-algebras corresponding to $\fhook{m}$ and $\bar{f}^{(m)}$ with screening operators 
\begin{equation} \label{eq:WakScreenersHook}
    \uQHR{\kk}{\slnpone}{f} \cong \bigcap_{i=1}^{m-1} \textup{ker} \int S_i(z) \ \dd z \cap 
    \left\{\begin{matrix}
        \textup{ker} \int \ee^{\frac{-1}{\kk+\dcox} \alpha_{m}}(z) \ \dd z, & f=\fhook{m} \\
        \textup{ker} \int \no{\gamma_{1,m-1}(z) \ee^{\frac{-1}{\kk+\dcox} \alpha_{m}}(z)} \ \dd z, & f=\bar{f}^{(m)}
    \end{matrix}\right\} \cap \bigcap_{i=m+1}^{n} \textup{ker} \int \ee^{\frac{-1}{\kk+\dcox} \alpha_{i}}(z) \ \dd z. 
\end{equation}
The number of bosonic ghost systems in the Wakimoto realisation of a W-algebra $ \uQHR{\kk}{\slnpone}{f}$ is determined by the cardinality of $(\proots)_0$. In the case of $\fhook{m}$ (and $\bar{f}^{(m)}$) and the good gradings specified earlier, the subset of the root lattice of $\slnpone$ generated by $(\sroots)_0$ is isomorphic to the root lattice for $\SLA{sl}{m}$ (empty for $m=1$). 

Even better, the screening fields $S_i(z)$ in the first $m-1$ screening operators in \eqref{eq:WakScreenersHook} are the same as those for the Wakimoto realisation of $\uaffvoa{\kk'}{\SLA{sl}{m}}$ but with a different domain and level $\kk'=\kk+n+1-m$. Likewise, the Wakimoto realisation of $\uQHR{\kk}{\slnpone}{\bar{f}^{(m-1)}}$ has the same ghost field content as the Wakimoto realisation of $\uQHR{\kk'}{\SLA{sl}{m}}{f_\theta}$ and several screening operators of the same form as those for $\uQHR{\kk'}{\SLA{sl}{m}}{f_\theta}$ but with a different domain and level. 

One might suspect from this that the $(m)$-to-$(m-1)$ hook-type inverse reduction is in many ways `the same' as the minimal-to-affine inverse reduction for $\SLA{sl}{m}$. This suspicion turns out to be correct, in that the results of the previous section can be easily adapted to prove the existence of the $(m)$-to-$(m-1)$ hook-type inverse reduction by identifying $(\proots)_0$ with the positive roots of $\SLA{sl}{m}$, the highest root $\hroot_0$ in $(\proots)_0$ with the highest root $\hroot$ of $\SLA{sl}{m}$ and appropriately shifting the Heisenberg fields.

To begin, consider again the Wakimoto realisation $\uQHR{\kk}{\slnpone}{\fhook{m}} \hookrightarrow \uaffvoa{\kk+\dcox}{\cartan} \otimes \bigotimes_{j=1}^{m-1}\bigotimes_{i=1}^{j} \bgvoa_{\sroot{i,j}}$, where we have observed that 
\begin{equation}
    (\proots)_0 = \set{\sroot{i,j} \ \vert \ 1 \leq i \leq j \leq m-1}.
\end{equation}
The highest root in $(\proots)_0$ is $\hroot_0=\sroot{1,m-1}$. Let $m >2$, which is when $(\proots)_0 \setminus \hroot_0$ is nonempty. The $m=2$ case is described in \cite{Feh21c} and the argument is essentially the same as what we are about to describe. Following the minimal-to-affine procedure, bosonise the bosonic ghost system $\bgvoa_{\hroot_0}$. The result is an embedding $\uQHR{\kk}{\slnpone}{\fhook{m}} \hookrightarrow \uaffvoa{\kk+\dcox}{\cartan} \otimes \Pi \otimes \bigotimes_{\alpha \in (\proots)_0 \setminus \hroot_0} \bgvoa_\alpha$ specified by
\begin{align} \label{eq:HookToHookIntermediate}
     \uQHR{\kk}{\slnpone}{\fhook{m}} \cong&\ \left( \bigcap_{i=1}^{m-2} \textup{ker} \int S_i(z) \ \dd z \right)\ \cap \ \textup{ker} \int \no{\tilde{\gamma}_{1,m-2}(z) \ee^{\frac{-1}{\kk+\dcox} \tsroot{m-1}}(z)} \ \dd z\ \\
     &\cap \bigcap_{i=m}^{n} \textup{ker} \int \ee^{\frac{-1}{\kk+\dcox} \alpha_{i}}(z) \ \dd z\ \cap\  \textup{ker} \int \ee^{\frac{1}{2}c + \frac{1}{2}d}(z) \ dz, \notag
\end{align}
where $\tsroot{m-1}(z) = \sroot{m-1}(z)-(\kk+\dcox)c(z)$ and 
\begin{equation}
    \tilde{\gamma}_{1,m-2}(z) = \gamma_{1,m-2}(z) + \no{ \beta_{m-1}(z)\ee^{-c}(z) + \sum_{j=1}^{m-3} \gamma_{m-j-1,m-2}(z) \beta_{m-j-1,m-1}(z)\ee^{-c}(z)}.
\end{equation}

Define $\tsroot{i}(z) = \sroot{i}(z)-\delta_{i,m-1}(\kk+\dcox)c(z)$ and let $\widetilde{\uaffvoa{\kk+\dcox}{\cartan}}$ be the subalgebra of $\uaffvoa{\kk+\dcox}{\cartan} \otimes \Pi \otimes \bigotimes_{\alpha \in (\proots)_0 \setminus \hroot_0} \bgvoa_\alpha$ generated by these fields. Likewise, let $\widetilde{\Pi}$ be the subalgebra generated by
\begin{equation} \label{eq:tildeHalfLatHook}
\begin{aligned}
    \tilde{c}(z)& = c(z), \qquad 
    \ee^{m\tilde{c}}(z) = \ee^{mc}(z), \\
    \tilde{d}(z) = d(z) 
    - (\kk+\dcox)\inner{\fwt{m-1}}{\fwt{m-1}} c(z)&
    + 2 \fwt{m-1}(z) \\
    - \sum_{\substack{\alpha,\alpha' \in  (\proots)_0 \setminus \hroot_0 \\ \alpha+\alpha'=\hroot_0}}& \no{\beta_\alpha (z) \beta_{\alpha'}(z)\ee^{-c}(z)} 
    - \sum_{\substack{\alpha,\alpha',\alpha'' \in  (\proots)_0 \setminus \hroot_0 \\ -\alpha+\alpha'+\alpha''=\hroot_0}} \no{\gamma_\alpha(z) \beta_{\alpha'} (z) \beta_{\alpha''}(z)\ee^{-c}(z)}.
\end{aligned}
\end{equation}
Finally for $\alpha \in  (\proots)_0 \setminus \hroot_0$, let $\widetilde{\bgvoa_\alpha}$ be the subalgebra of $\uaffvoa{\kk+\dcox}{\cartan} \otimes \Pi \otimes \bigotimes_{\alpha \in (\proots)_0 \setminus \hroot_0} \bgvoa_\alpha$ generated by the fields
\begin{equation} \label{eq:tildeGhostsHook}
\begin{aligned}
    \tilde{\beta}_\alpha(z) &= \beta_\alpha(z)
    - \frac{1}{2}\sum_{\substack{\alpha',\alpha'' \in  (\proots)_0 \setminus \hroot_0 \\ \alpha'+\alpha''=\hroot_0+\alpha}} \no{\beta_{\alpha'} (z) \beta_{\alpha''}(z)\ee^{-c}(z)}, \\
    \tilde{\gamma}_\alpha(z) &= \gamma_\alpha(z) 
    + \sum_{\substack{\alpha' \in (\proots)_0 \setminus \hroot_0 \\ \alpha'=\hroot_0-\alpha}}\no{\beta_{\alpha'}(z)\ee^{-c}(z)}
    + \sum_{\substack{\alpha'',\alpha''' \in (\proots)_0 \setminus \hroot_0 \\ -\alpha''+\alpha'''=\hroot_0-\alpha}}\no{\gamma_{\alpha''}(z)\beta_{\alpha'''}(z)\ee^{-c}(z)}.
\end{aligned}
\end{equation}

All the properties enjoyed by the tilded subalgebras in \cref{subsec:MintoAff} are satisfied by these subalgebras:
\begin{equation} \label{eq:hookffisos}
\begin{gathered}
    \widetilde{\uaffvoa{\kk+\dcox}{\cartan}} \cong \uaffvoa{\kk+\dcox}{\cartan}, \qquad \widetilde{\Pi} \cong \Pi, \qquad \widetilde{\bgvoa_\alpha} \cong \bgvoa_\alpha \quad \forall \alpha \in (\proots)_0 \setminus \hroot_0,\\
    \uaffvoa{\kk+\dcox}{\cartan} \otimes \Pi \otimes \bigotimes_{\alpha \in (\proots)_0 \setminus \hroot_0} \bgvoa_\alpha= \widetilde{\uaffvoa{\kk+\dcox}{\cartan}} \otimes \widetilde{\Pi} \otimes \bigotimes_{\alpha \in (\proots)_0 \setminus \hroot_0} \widetilde{\bgvoa_\alpha}.
\end{gathered}
\end{equation}
The proofs for all of these statements are identical to the corresponding statements in \cref{subsec:MintoAff} by replacing $n+1$ with $m$, $\proots$ with $(\proots)_0$ and $\hroot$ with $\hroot_0$, as well as modifying the definition of exposed (internal) roots to be  roots in $(\proots)_0$ (not) containing either $\sroot{1}$ or $\sroot{m}$ as a summand. Moreover, for $i<m-1$, we still have $S_i(z) = \tilde{S}_i(z)$. 

Hence, the Wakimoto realisation of $\uQHR{\kk}{\slnpone}{\fhook{m}}$ becomes 
\begin{align}
     \uQHR{\kk}{\slnpone}{\fhook{m}} \cong&\ \left( \bigcap_{i=1}^{m-2} \textup{ker} \int \tilde{S}_i(z) \ \dd z \right)\ \cap \ \textup{ker} \int \no{\tilde{\gamma}_{1,m-2}(z) \ee^{\frac{-1}{\kk+\dcox} \tsroot{m-1}}(z)} \ \dd z\ \\
     &\cap \bigcap_{i=m}^{n} \textup{ker} \int \ee^{\frac{-1}{\kk+\dcox} \tilde{\alpha}_{i}}(z) \ \dd z\ \cap\  \textup{ker} \int \ee^{\frac{1}{2}c + \frac{1}{2}d}(z) \ dz \subset \widetilde{\uaffvoa{\kk+\dcox}{\cartan}} \otimes \widetilde{\Pi} \otimes \bigotimes_{\alpha \in (\proots)_0 \setminus \hroot_0} \widetilde{\bgvoa_\alpha}. \notag
\end{align}

The first $m-1$ screening operators act nontrivially only on $\widetilde{\uaffvoa{\kk+\dcox}{\cartan}} \otimes\bigotimes_{j=1}^{m-2}\bigotimes_{i=1}^{j} \widetilde{\bgvoa_{\sroot{i,j}}}$ and describe precisely the Wakimoto realisation of $\uQHR{\kk}{\slnpone}{\bar{f}^{(m-1)}}$ as per \eqref{eq:WakScreenersHook} using the isomorphism between $\widetilde{\uaffvoa{\kk+\dcox}{\cartan}} \otimes \bigotimes_{j=1}^{m-2}\bigotimes_{i=1}^{j} \widetilde{\bgvoa_{\sroot{i,j}}}$ and $\uaffvoa{\kk+\dcox}{\cartan} \otimes \bigotimes_{j=1}^{m-2}\bigotimes_{i=1}^{j} \bgvoa_{\sroot{i,j}}$ defined by removing the tildes on all fields as in \eqref{eq:hookffisos}. Thus the fields in $\widetilde{\uaffvoa{\kk+\dcox}{\cartan}} \otimes \bigotimes_{j=1}^{m-2}\bigotimes_{i=1}^{j} \widetilde{\bgvoa_{\sroot{i,j}}}$ that appear in the Wakimoto realisation of $\uQHR{\kk}{\slnpone}{\fhook{m}}$ can be identified with fields in $\uQHR{\kk}{\slnpone}{\bar{f}^{(m-1)}} \cong \uQHR{\kk}{\slnpone}{\fhook{m-1}}$. 

\begin{theorem} \label{thm:HookToHook}
Let $\kk$ be generic. There exists an embedding 
\begin{equation}
    \uQHR{\kk}{\slnpone}{\fhook{m}} \hookrightarrow \uQHR{\kk}{\slnpone}{\fhook{m-1}} \otimes \Pi \otimes \bigotimes_{i=2}^{m-1} \bgvoa_{\sroot{i,m-1}}
\end{equation}
whose image is specified by
\begin{equation}
   \uQHR{\kk}{\slnpone}{\fhook{m}} \cong \textup{\normalfont ker} \int  \ee^{A_m}(z) \ \dd z,
\end{equation}
where
\begin{align}
     A_m(z) = &\ \frac{1}{2}\left(1-(\kk+\dcox)\inner{\fwt{m-1}}{\fwt{m-1}} \right) c(z) + \frac{1}{2}d(z)  - \fwt{m-1}(z) \\
    &+ \frac{1}{2} \sum_{\substack{\alpha,\alpha' \in (\proots)_0 \setminus \hroot_0 \\ \alpha+\alpha'=\hroot_0}} \no{\beta_\alpha (z) \beta_{\alpha'}(z)\ee^{-c}(z)} 
    +\frac{1}{2} \sum_{\substack{\alpha,\alpha',\alpha'' \in (\proots)_0 \setminus \hroot_0 \\ -\alpha+\alpha'+\alpha''=\hroot_0}} \no{\gamma_\alpha(z) \beta_{\alpha'} (z) \beta_{\alpha''}(z)\ee^{-c}(z)}. \notag
\end{align}
Here, the fields $\fwt{m-1}(z)$, $\beta_\alpha(z)$ and $\gamma_\alpha(z)$, for $\alpha =\sroot{i,j}$ with $ j \neq m-1$, act on $\uQHR{\kk}{\slnpone}{\fhook{m-1}}$ via the Wakimoto realisation of $\uQHR{\kk}{\slnpone}{\fhook{m-1}}$.
\end{theorem}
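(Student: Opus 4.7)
The plan is to mimic the proof of \cref{thm:mintoaff} directly, exploiting the observation made earlier that, after bosonising $\bgvoa_{\hroot_0}$, the W-algebra $\uQHR{\kk}{\slnpone}{\fhook{m}}$ relates to $\uQHR{\kk}{\slnpone}{\fhook{m-1}}$ in the same manner that $\uaffvoa{\kk}{\slnpone}$ relates to $\uQHR{\kk}{\slnpone}{\fmin}$. Concretely, everything has been set up in \cref{subsec:HooktoHook}: the intermediate screening description \eqref{eq:HookToHookIntermediate}, the tilded subalgebras \eqref{eq:tildeHalfLatHook}--\eqref{eq:tildeGhostsHook}, their mutual orthogonality and the free-field decomposition \eqref{eq:hookffisos}, and the identity $S_i(z)=\tilde{S}_i(z)$ for $i<m-1$. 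It only remains to assemble these ingredients.

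The steps I would carry out are as follows. First, compose the Wakimoto realisation of $\uQHR{\kk}{\slnpone}{\fhook{m}}$ from \eqref{eq:WakScreenersHook} with the FMS bosonisation of $\bgvoa_{\hroot_0}$ to obtain \eqref{eq:HookToHookIntermediate}, and rewrite every field in terms of tilded generators, using \eqref{eq:hookffisos} to regard the codomain as $\widetilde{\uaffvoa{\kk+\dcox}{\cartan}} \otimes \widetilde{\Pi} \otimes \bigotimes_{\alpha \in (\proots)_0 \setminus \hroot_0} \widetilde{\bgvoa_\alpha}$. Second, given an arbitrary field $F(z) \in \uQHR{\kk}{\slnpone}{\fhook{m}}$, decompose it as $F(z) = \sum_m \tilde{A}_m(z) \otimes \tilde{B}_m(z)$ with $\tilde{A}_m(z)$ in the Wakimoto domain of $\uQHR{\kk}{\slnpone}{\bar{f}^{(m-1)}}$ and $\tilde{B}_m(z)$ in $\widetilde{\Pi} \otimes \bigotimes_{i=2}^{m-1} \widetilde{\bgvoa_{\sroot{i,m-1}}}$; one may assume the $\tilde{B}_m(z)$ are linearly independent. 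Third, observe that the screening operators defining $\uQHR{\kk}{\slnpone}{\bar{f}^{(m-1)}}$, namely $\int \tilde{S}_i(z)\dd z$ for $i\leq m-2$ and $\int \no{\tilde{\gamma}_{1,m-2}(z) \ee^{\frac{-1}{\kk+\dcox} \tsroot{m-1}}(z)}\dd z$, as well as $\int \ee^{\frac{-1}{\kk+\dcox}\tilde{\alpha}_i}(z)\dd z$ for $i\geq m$, all act trivially on the $\tilde{B}_m(z)$ factor. Linear independence then forces each $\tilde{A}_m(z)$ to lie in the intersection of their kernels, which by \eqref{eq:WakScreenersHook} coincides with the image of the Wakimoto realisation of $\uQHR{\kk}{\slnpone}{\fhook{m-1}}$. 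The $\tilde{A}_m(z)$ may therefore be identified with fields of $\uQHR{\kk}{\slnpone}{\fhook{m-1}}$ via the isomorphisms in \eqref{eq:hookffisos}.

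Finally, one must produce the single screening field $\ee^{A_m}(z)$ that cuts out the image. This is purely algebraic: rewrite the FMS screening exponent $\tfrac{1}{2}c(z)+\tfrac{1}{2}d(z)$ by substituting the expression for $d(z)$ in terms of $\tilde{d}(z)$, $\tilde{c}(z)$, $\fwt{m-1}(z)$, and the quadratic/cubic ghost corrections appearing in \eqref{eq:tildeHalfLatHook}. Solving \eqref{eq:tildeHalfLatHook} for $d(z)$ yields exactly the formula for $A_m(z)$ claimed in the statement, since the correction terms involve ghost fields in $\bigotimes_{i=2}^{m-1}\bgvoa_{\sroot{i,m-1}}$ together with $\fwt{m-1}(z)$ which, on the $\uQHR{\kk}{\slnpone}{\fhook{m-1}}$ side, is interpreted through its Wakimoto realisation. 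That the resulting map is an embedding follows because it is defined by expressing the Wakimoto image of $\uQHR{\kk}{\slnpone}{\fhook{m}}$ in terms of the Wakimoto image of $\uQHR{\kk}{\slnpone}{\fhook{m-1}}$ and the additional free-field generators.

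The only place where genuine care is needed is verifying that the degenerate edge cases are handled correctly: the case $m=2$ (no interior ghosts, $(\proots)_0\setminus\hroot_0$ empty) is already treated in \cite{Feh21c}, and the case $m=3$ (where several summations collapse) should be checked to ensure that the tilded subalgebra machinery of \cref{subsec:MintoAff} specialises correctly. The rest is a purely notational translation of the minimal-to-affine argument, with $n+1$ replaced by $m$, $\proots$ by $(\proots)_0$, and $\hroot$ by $\hroot_0$, so no new conceptual work is required.
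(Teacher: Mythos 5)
Your proposal is correct and follows essentially the same route as the paper: the paper's proof of \cref{thm:HookToHook} is literally the statement that it is identical to that of \cref{thm:mintoaff}, with all the required groundwork (the intermediate screening description, the tilded subalgebras and their orthogonality, and $S_i(z)=\tilde{S}_i(z)$ for $i<m-1$) already established in \cref{subsec:HooktoHook} under the substitutions $n+1\mapsto m$, $\proots\mapsto(\proots)_0$, $\hroot\mapsto\hroot_0$. Your decomposition $F(z)=\sum \tilde{A}(z)\otimes\tilde{B}(z)$, the linear-independence argument, and the rewriting of the FMS screening exponent to obtain $A_m(z)$ are exactly the steps the paper intends.
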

\begin{proof}
This statement is proved in an identical fashion to \cref{thm:mintoaff}.
\end{proof}

\noindent This confirms a conjecture in \cite{FehThesis}. The minimal-to-affine case corresponds to choosing $m=n+1$, and the $m=2$ case is included in the above theorem by omitting $\bigotimes_{i=2}^{m-1} \bgvoa_{\sroot{i,m-1}}$ and observing that $(\proots)_0 \setminus \hroot_0$ is empty.

\subsection{Nongeneric levels} \label{subsec:Nongeneric}

In this section, we prove the existence of the hook-to-hook inverse reduction embedding for all noncritical $\kk$, rather than just nongeneric $\kk$ as in \cref{thm:HookToHook}. The key observation is that the proofs of the previous section apply equally well when considering the vertex algebras involved as vertex algebras over the ring $U = \CC[k]$ where $k$ is a formal parameter. Then, subject to observing that the resulting vertex algebra embedding can be specialised to any given $k=\kk$, we get our desired inverse reduction.

To begin, let $\uQHR{U}{\slnpone}{\fhook{m}}$ be the vertex operator algebra obtained by the $U$-version of quantum hamiltonian reduction, see \cite[Sec.~2.2]{GenScreen20}. Let $\uaffvoa{U+\dcox}{\cartan}$ be the vertex operator algebra obtained from $\uaffvoa{\kk+\dcox}{\cartan}$ by replacing $\kk$ with the formal parameter $k$. Then, by \cite[Cor.~4.12]{GenScreen20}, there is an embedding
\begin{equation}
    \uQHR{U}{\slnpone}{\fhook{m}} \hookrightarrow \uaffvoa{U+\dcox}{\cartan} \otimes \bigotimes_{\alpha \in (\proots)_0} \bgvoa_\alpha
\end{equation}
whose image is specified by
\begin{equation}
    \uQHR{U}{\slnpone}{\fhook{m}} \cong \left( \bigcap_{i=1}^{m-1} \textup{ker} \int S^U_i(z) \ \dd z \right) \cap \left(  \bigcap_{i=m}^{n} \textup{ker} \int \ee^{\frac{-1}{k+\dcox} \alpha_{i}}(z) \ \dd z \right),
\end{equation}
where $S^U_i(z)$ is given by the same formula as $S_i(z)$, except with $\kk$ replaced with $k$, and treated as an operator on $\uaffvoa{U+\dcox}{\cartan} \otimes \bigotimes_{\alpha \in \proots} \bgvoa_\alpha$. An important consequence of this is that the image of $\uQHR{U}{\slnpone}{\fhook{m}}$ in $\uaffvoa{U+\dcox}{\cartan} \otimes \bigotimes_{\alpha \in (\proots)_0} \bgvoa_\alpha$ contains no poles in $k$.

By the same argument as in \cref{subsec:HooktoHook}, except with $\kk$ replaced with $k$ throughout, we obtain an embedding of vertex algebras over $U$
\begin{equation}
    \psi^U: \uQHR{U}{\slnpone}{\fhook{m}} \hookrightarrow \uQHR{U}{\slnpone}{\fhook{m-1}} \otimes \Pi \otimes \bigotimes_{i=2}^{m-1} \bgvoa_{\sroot{i,m-1}}.
\end{equation}

We would like to specialise $\psi^U$ to any noncritical level $\kk$. To do so, let $\set{W_i(z)}$ be a PBW basis of $\uQHR{U}{\slnpone} {\fhook{m}}$ (such a basis exists by \cite[Thm.~4.1]{KacQua04}) and consider the image $\set{\psi^U(W_i(z))}$ of each field. Define 
\begin{equation}
    \psi^\kk: \uQHR{\kk}{\slnpone}{\fhook{m}} \rightarrow \uQHR{\kk}{\slnpone}{\fhook{m-1}} \otimes \Pi \otimes \bigotimes_{i=2}^{m-1} \bgvoa_{\sroot{i,m-1}}.
\end{equation}
to be the vertex algebra morphism determined by, treating $\set{W_i(z)}$ as a PBW basis of $\uQHR{\kk}{\slnpone}{\fhook{m}}$,
\begin{equation}
    \psi^\kk: W_i(z) \mapsto \psi^U(W_i(z)) \Big|_{k=\kk}.
\end{equation}
This is a vertex algebra morphism, that is it preserves the operator product expansions, because $\psi^U$ is. What remains is to show that $\psi^\kk$ is injective. 

Firstly, we can choose $\set{W_i(z)}$ such that the fields $\psi^U(W_i(z))$ have at least one non-$k$ dependent summand: Consider the Miura map $\mu^U: \uQHR{U}{\slnpone}{\fhook{m}} \hookrightarrow \uaffvoa{U}{(\slnpone)_0}$ of \cite{KacQua03}. By \cite[Thm.~4.1]{KacQua04}, choose $\set{W_i(z)}$ such that $W_i(z) = J^{u_i}(z) + \cdots$ where $J^{u_i}(z) \in \uaffvoa{U}{(\slnpone)_0}$ is $k$-independent. Composing the Miura map $\mu^U$ with the Wakimoto realisation $\rho^U$ of $\uaffvoa{U}{(\slnpone)_0}$ and the bosonisation of $\bgvoa_{1,m}$ defines an embedding
\begin{equation}
    \hat{\psi}^U: \uQHR{U}{\slnpone}{\fhook{m}} \hookrightarrow \uaffvoa{U+\dcox}{\cartan} \otimes \Pi \otimes \bigotimes_{\alpha \in (\proots)_0 \setminus \alpha_{1,m}} \bgvoa_{\alpha}.
\end{equation}

Since the Wakimoto realisation of $J^{u_i}(z)$ has a $k$-independent term \cite{Fre05Opers}, so too does the image of $W_i(z)$ under $\hat{\psi}^U$. The composition of $\rho^U \circ \mu^U$ is equal to the Wakimoto realisation of $\uQHR{U}{\slnpone}{\fhook{m}}$ (see \cite[Sec.~4.5]{GenScreen20}). Recall that $\psi^U$ is obtained by taking the Wakimoto realisation of $\uQHR{U}{\slnpone}{\fhook{m}}$, bosonising $\bgvoa_{1,m}$ and rewriting the fields in terms of the tilded fields. All of the tilded fields contain $k$-independent leading terms.

Thus, the images of the fields $W_i(z)$ in $\widetilde{\uaffvoa{U+\dcox}{\cartan}}  \otimes \widetilde{\Pi} \otimes \bigotimes_{\alpha \in (\proots)_0 \setminus \alpha_{1,m}} \widetilde{\bgvoa_{\alpha}}$, and therefore the images with respect to $\psi^U$, have $k$-independent leading terms. So the specialisation $\psi^{\kk}$ of $\psi^U$ to any noncritical $\kk$ maps $W_i(z)$ to something nonzero.

Finally, $\psi^\kk$ is injective because the composition of it with the Wakimoto realisation of $\uQHR{\kk}{\slnpone}{\fhook{m-1}}$ is equal to the composition of the specialisation $\hat{\psi}^\kk$ with rewriting the fields in terms of tilded fields and the isomorphisms \eqref{eq:hookffisos}, which is injective. 

\begin{theorem} \label{thm:HookToHookNogen}
Let $\kk$ be noncritical and $m \in \ZZ$ satisfying $2 \leq m \leq n+1$. There exists an embedding 
\begin{equation} \label{eq:HooksFromHooks}
    \uQHR{\kk}{\slnpone}{\fhook{m}} \hookrightarrow \uQHR{\kk}{\slnpone}{\fhook{m-1}} \otimes \Pi \otimes \bigotimes_{i=2}^{m-1} \bgvoa_{\sroot{i,m-1}}.
\end{equation}
\end{theorem}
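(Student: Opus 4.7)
The plan is to promote the generic-level argument of \cref{thm:HookToHook} to an argument over the base ring $U = \CC[k]$ and then specialise. First I would work with the universal W-algebra $\uQHR{U}{\slnpone}{\fhook{m}}$ over $U$ and its Wakimoto realisation \cite[Cor.~4.12]{GenScreen20}, whose image inside $\uaffvoa{U+\dcox}{\cartan} \otimes \bigotimes_{\alpha \in (\proots)_0} \bgvoa_\alpha$ is cut out by the screening operators built from $S_i^U(z)$ and $\ee^{-(k+\dcox)^{-1}\alpha_i}(z)$, all of which are polynomial in $k$. Because quantum hamiltonian reduction commutes with specialisation at any noncritical $\kk$, nothing in the image acquires a pole in $k$.

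Next I would rerun the tilded-field construction of \cref{subsec:HooktoHook} verbatim over $U$: bosonise $\bgvoa_{\hroot_0}$, define $\tsroot{i}(z)$, $\tilde{c}(z)$, $\tilde{d}(z)$, $\tilde{\beta}_\alpha(z)$, $\tilde{\gamma}_\alpha(z)$ using the same formulas with $\kk$ replaced by $k$, and verify that Propositions \ref{prop:tildeGhostOPEs}--\ref{prop:FreeFieldIso} go through unchanged over $U$ (the only $k$-dependence enters through $(k+\dcox)\inner{\fwt{m-1}}{\fwt{m-1}} c(z)$ in $\tilde{d}(z)$, which creates no obstruction because $k+\dcox$ is a unit in the localisation of $U$ at any noncritical $\kk$). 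This yields an embedding of vertex algebras over $U$
\begin{equation}
    \psi^U \colon \uQHR{U}{\slnpone}{\fhook{m}} \hookrightarrow \uQHR{U}{\slnpone}{\fhook{m-1}} \otimes \Pi \otimes \bigotimes_{i=2}^{m-1} \bgvoa_{\sroot{i,m-1}}.
\end{equation}

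Now I would specialise. By \cite[Thm.~4.1]{KacQua04}, fix a PBW basis $\{W_i(z)\}$ of $\uQHR{U}{\slnpone}{\fhook{m}}$ whose leading terms under the Miura map $\mu^U$ are of the form $J^{u_i}(z) + \cdots$ with $J^{u_i}(z) \in \uaffvoa{U}{(\slnpone)_0}$ $k$-independent. Composing $\mu^U$ with the Wakimoto realisation of $\uaffvoa{U}{(\slnpone)_0}$ recovers the Wakimoto realisation of $\uQHR{U}{\slnpone}{\fhook{m}}$, and the $k$-independent leading terms survive the passage to tilded fields. Consequently each $\psi^U(W_i(z))$ has a $k$-independent leading term, so the specialisation $\psi^\kk$ defined by $W_i(z) \mapsto \psi^U(W_i(z))|_{k=\kk}$ is a well-defined nonzero vertex algebra morphism at every noncritical $\kk$.

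The main obstacle is proving injectivity of $\psi^\kk$, since specialising a morphism of free $U$-modules can in principle collapse the image. The key observation is that the composition of $\psi^\kk$ with the Wakimoto realisation of $\uQHR{\kk}{\slnpone}{\fhook{m-1}}$ agrees with the specialised Wakimoto realisation of $\uQHR{\kk}{\slnpone}{\fhook{m}}$ rewritten in tilded coordinates via the isomorphisms \eqref{eq:hookffisos}. Since that Wakimoto realisation is injective for all noncritical $\kk$ \cite[Cor.~4.12]{GenScreen20} and the tilded-to-untilded change of variables is an isomorphism at noncritical $\kk$ by the analogue of \cref{prop:FreeFieldIso}, the composition is injective; hence $\psi^\kk$ itself is injective, completing the proof.
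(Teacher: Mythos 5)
Your proposal is correct and follows essentially the same route as the paper: lift the generic-level construction to the base ring $U=\CC[k]$ via the Genra screening-operator realisation, choose a PBW basis with $k$-independent leading terms through the Miura map to guarantee the specialisation $\psi^{\kk}$ is nonzero, and deduce injectivity by composing with the Wakimoto realisation of $\uQHR{\kk}{\slnpone}{\fhook{m-1}}$. The only cosmetic difference is your explicit remark about localising at $k+\dcox$, which the paper leaves implicit.
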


\begin{corollary}
Let $\kk$ be noncritical and $m \in \ZZ_\geq 1$ satisfying $1 \leq m \leq n+1$. There exists an embedding 
\begin{equation} \label{eq:AffFromHooks}
    \uaffvoa{\kk}{\slnpone} \hookrightarrow \uQHR{\kk}{\slnpone}{\fhook{m}}  \otimes \Pi^{\otimes (n+1-m)} \otimes  \bgvoa^{\otimes \frac{1}{2}(n+m-2)(n+1-m)}.
\end{equation}
\end{corollary}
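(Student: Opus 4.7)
The plan is to obtain the embedding by iterating \cref{thm:HookToHookNogen} from $m'=n+1$ down to $m'=m+1$. First observe that $\uQHR{\kk}{\slnpone}{\fhook{n+1}} = \uaffvoa{\kk}{\slnpone}$ since $\fhook{n+1}=0$, so the statement for $m=n+1$ is trivial (take the empty tensor product). Assume $m \le n$.

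For each $m'$ with $m+1 \le m' \le n+1$, \cref{thm:HookToHookNogen} provides an embedding
\begin{equation}
    \iota_{m'} : \uQHR{\kk}{\slnpone}{\fhook{m'}} \hookrightarrow \uQHR{\kk}{\slnpone}{\fhook{m'-1}} \otimes \Pi \otimes \bgvoa^{\otimes(m'-2)},
\end{equation}
using that $\bigotimes_{i=2}^{m'-1} \bgvoa_{\sroot{i,m'-1}} \cong \bgvoa^{\otimes(m'-2)}$. Tensoring $\iota_{m'}$ on the right with the identity on the accumulated free-field factors produced by the previous steps, and composing, yields a chain of embeddings
\begin{equation}
    \uaffvoa{\kk}{\slnpone} \hookrightarrow \uQHR{\kk}{\slnpone}{\fhook{n}} \otimes \VOA{V}_{n+1} \hookrightarrow \uQHR{\kk}{\slnpone}{\fhook{n-1}} \otimes \VOA{V}_{n} \otimes \VOA{V}_{n+1} \hookrightarrow \cdots \hookrightarrow \uQHR{\kk}{\slnpone}{\fhook{m}} \otimes \bigotimes_{m'=m+1}^{n+1} \VOA{V}_{m'},
\end{equation}
where $\VOA{V}_{m'} = \Pi \otimes \bgvoa^{\otimes(m'-2)}$. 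Each composition is an embedding because vertex algebra embeddings are closed under tensoring with the identity and under composition.

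It remains to count the free-field factors. The number of $\Pi$ factors is exactly the number of steps, which is $n+1-m$. The total number of $\bgvoa$ factors is
\begin{equation}
    \sum_{m'=m+1}^{n+1}(m'-2) = \sum_{j=m-1}^{n-1} j = \frac{n(n-1)}{2} - \frac{(m-1)(m-2)}{2} = \frac{(n+m-2)(n+1-m)}{2},
\end{equation}
matching the exponent stated in the corollary. This is the key arithmetic check; no further analytic input is required beyond \cref{thm:HookToHookNogen}, and the argument goes through uniformly for all noncritical $\kk$ since each $\iota_{m'}$ does. The only mild obstacle is bookkeeping to ensure the free-field factors accumulate on a side disjoint from the W-algebra factor being reduced at each step, but this is immediate because $\iota_{m'}$ acts only on the first tensor factor.
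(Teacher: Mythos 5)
Your proposal is correct and follows exactly the paper's route: the corollary is proved by composing the embeddings of \cref{thm:HookToHookNogen}, and your count of the $\Pi$ and $\bgvoa$ factors ($n+1-m$ and $\sum_{m'=m+1}^{n+1}(m'-2)=\tfrac{1}{2}(n+m-2)(n+1-m)$ respectively) checks out. Nothing further is needed.
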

\begin{proof}
    Compose the inverse reduction embeddings of \cref{thm:HookToHookNogen}.
\end{proof}

\section{Outlook}  \label{sec:Outlook}
While we have proved the existence (and under certain circumstances provided a construction; see \cref{app:ExpExp}) of inverse reduction embeddings amongst hook-type $\slnpone$ W-algebras, there are plenty of non-hook-type $\slnpone$ W-algebras. For example, the rectangular $\slfour$ W-algebra $\uQHR{\kk}{\slfour}{\frec}$, where $\frec = f_{1,2}+f_{3,4}$, corresponding to the partition $(2,2)$ whose operator product expansions are detailed in \cite[Sec.~2.2]{Ada21Certain}. Indeed screening operators corresponding to a Wakimoto realisation of $\uQHR{\kk}{\slfour}{\frec}$ are presented in \cite[App.~A.1]{CreCorr20}. Using these to identify inverse reductions involving $\uQHR{\kk}{\slfour}{\frec}$ is the topic of ongoing work.

There are also non-type-A W-algebras that might be accessible using the free-field approach to inverse quantum hamiltonian reductions used here. One known example is the case of the subregular $\mathfrak{sp}_4$ W-algebra. A realisation of it in terms of the regular $\mathfrak{sp}_4$ W-algebra and the half lattice was proposed in \cite{Beem21}. An identical screening operator argument to that used in \cite[Thm.~3.3]{Feh21c} and \cref{thm:mintoaff} provides an alternative construction of this realisation. 

The representation-theoretic content of the general hook-type inverse reductions remains to be explored. Such embeddings have been used previously to study relaxed and logarithmic W-algebra modules in, for example, \cite{AdaRea17,AdaRea20,Feh21c,ACG21}. We expect that the new inverse reductions \eqref{eq:HooksFromHooks} can produce similar insights into the representation theory of hook-type W-algebras, in particular $\uaffvoa{\kk}{\slnpone}$. 

When the embeddings \eqref{eq:HooksFromHooks} descend to an embedding of simple quotients is a difficult but important question. In the known $\slnpone$ cases, the inverse reduction descends to simple quotients for almost all $\kk$. There however, these embeddings can be written down and used directly to study singular vectors of a specific form. It is possible that the question of when \eqref{eq:HooksFromHooks} descends to simple quotients can be addressed without knowing explicit formulae, but this is to be considered in future work.

\appendix
\section{Explicit expressions} \label{app:ExpExp}

One useful feature of the argument presented in this paper is that if the Wakimoto realisations of the involved W-algebras are known explicitly (corresponding to the particular choice of screening operators in \cref{subsec:HooktoHook}), then the inverse reduction can be written down explicitly. We conclude with an illustrative example: the inverse reduction
\begin{equation}
    \uaffvoa{\kk}{\slfour} \hookrightarrow \uQHR{\kk}{\slfour}{\fmin}\otimes \Pi \otimes \bigotimes_{i=2}^{3} \bgvoa_{\sroot{i,3}} .
\end{equation}

The minimal W-algebra $\uQHR{\kk}{\slfour}{\fmin}$ is strongly-generated by $9$ fields denoted by $L(z), H(z), E(z), F(z), J(z), P^{1,\pm}(z)$ and $P^{2,\pm}(z)$. The first of these fields, $L(z)$, is a conformal field whose associated central charge is, using \eqref{eq:hookcc},
\begin{equation}
    \cc^{(2,1,1)}=-\frac{3\kk (2\kk+3)}{\kk+4}.
\end{equation}
With respect to $L(z)$, each of $H(z),E(z),F(z),J(z),P^{1,\pm}(z)$ and $P^{2,\pm}(z)$ are primary fields and their conformal dimensions are $1,1,1,1,\frac{3}{2}$ and $\frac{3}{2}$ respectively. $H(z),E(z),F(z)$ generate a subalgebra of $\uQHR{\kk}{\slfour}{\fmin}$ isomorphic to $\uaffvoa{\kk+1}{\sltwo}$ and $J(z)$ generates a Heisenberg subalgebra of weight $\kk+2$. The singular operator product expansions of the generating fields, excluding those implied by the preceding discussion, are \cite{Ara17Orb}
\begin{equation} \label{ope:sl4min}
\begin{gathered}
    H(z)P^{i,\pm}(w) \sim \frac{(3-2i)P^{i,\pm}(w)}{z-w}, \quad
    J(z)P^{i,\pm}(w) \sim \frac{\pm P^{i,\pm}(w)}{(z-w)}, \\
    E(z)P^{2,-}(w) \sim \frac{P^{1,-}(w)}{z-w}, \quad 
    E(z)P^{2,+}(z) \sim \frac{-P^{1,+}(w)}{z-w}, \quad 
    F(z)P^{1,-}(w) \sim \frac{P^{2,-}(w)}{z-w}, \quad 
    F(z)P^{1,+}(w) \sim \frac{-P^{2,+}(w)}{z-w}, \\
    P^{1,-}(z)P^{1,+}(w) \sim \frac{-2 (\kk+2) E(w)}{(z-w)^2} + \frac{2\no{J(w)E(w)} - (\kk+2)\partial E(w)}{z-w}, \\
    P^{2,-}(z)P^{2,+}(w) \sim \frac{-2 (\kk+2) F(w)}{(z-w)^2} + \frac{2\no{J(w)F(w)} - (\kk+2)\partial F(w)}{z-w}, \\
    P^{1,-}(z)P^{2,+}(w) \sim \frac{-2(\kk+1)(\kk+2)\wun(w)}{(z-w)^3} + \frac{2(\kk+1)J(w) - (\kk+2)H(w)}{(z-w)^2} \hspace{18em} \\
        \hspace{3em} +\frac{(\kk+4)L(w) - 2\no{E(w)F(w)} - \frac{1}{2}\no{H(w)H(w)} + \no{H(w)J(w)} - \frac{3}{2}\no{J(w)J(w)} - \frac{\kk}{2} \partial H(w) + (\kk+1) \partial J(w)}{z-w}, \\
    P^{1,+}(z)P^{2,-}(w) \sim \frac{2(\kk+1)(\kk+2)\wun(w)}{(z-w)^3} + \frac{2(\kk+1)J(w) + (\kk+2)H(w)}{(z-w)^2} \hspace{18em} \\
        \hspace{3em} +\frac{-(\kk+4)L(w) + 2\no{E(w)F(w)} + \frac{1}{2}\no{H(w)H(w)} + \no{H(w)J(w)} + \frac{3}{2}\no{J(w)J(w)} + \frac{\kk}{2} \partial H(w) +(\kk+1) \partial J(w)}{z-w}.
\end{gathered}
\end{equation}

The Wakimoto realisation of $\uaffvoa{\kk}{\slfour}$ is the vertex operator algebra embedding $\uaffvoa{\kk}{\slfour} \hookrightarrow \uaffvoa{\kk+4}{\mathfrak{h}}\otimes \bigotimes_{\alpha \in \proots} \bgvoa_\alpha$ determined by (see \cite[Thm.~5.1]{Fre05Opers})
\begin{equation}
\begin{aligned}
    e_1(z) \mapsto& \beta_1(z) + \no{\beta_{1,2}(z) \gamma_2(z)} + \no{\beta_{1,3}(z) \gamma_{2,3}(z)}, \quad 
    e_2(z) \mapsto \beta_2(z) + \no{\beta_{2,3}(z) \gamma_3(z)}, \quad
    e_3(z) \mapsto \beta_3(z),\\
    h_1(z) \mapsto& \alpha_1(z) + 2 \no{\beta_1(z) \gamma_1(z)}- \no{\beta_2(z) \gamma_2(z)}  + \no{\beta_{1,2} \gamma_{1,2}(z)}- \no{\beta_{2,3}(z) \gamma_{2,3}(z)} +     \no{\beta_{1,3}(z) \gamma_{1,3}(z)} , \\
    h_2(z) \mapsto& \alpha_2(z) - \no{\beta_1(z) \gamma_1(z)}+  2 \no{\beta_2(z) \gamma_2(z)} - \no{\beta_3(z) \gamma_3(z)} + \no{\beta_{1,2}(z) \gamma_{1,2}(z)} +       \no{\beta_{2,3}(z) \gamma_{2,3}(z)},\\
    h_3(z) \mapsto& \alpha_3(z)- \no{\beta_2(z) \gamma_2(z)} + 2 \no{\beta_3(z) \gamma_3(z)} - \no{\beta_{1,2}(z) \gamma_{1,2}(z)}  + \no{\beta_{2,3}(z)   \gamma_{2,3}(z)} + \no{\beta_{1,3}(z) \gamma_{1,3}(z)},\\
    f_1(z) \mapsto& -\no{\alpha_1(z) \gamma_1(z)} - \no{\beta_1(z)\gamma_1(z) \gamma_1(z)} + \no{\beta_2(z) \gamma_{1,2}(z)} + \no{\beta_{2,3}(z) \gamma_{1,3}(z)} - (\kk+2) \partial\gamma_1(z),\\ 
    f_2(z) \mapsto&  - \no{\alpha_2(z) \gamma_2(z)} - \no{\beta_2(z)\gamma_2(z) \gamma_2(z)}+ \no{\beta_1(z) \gamma_1(z) \gamma_2(z)}  \\
        &\hspace{3em} - \no{\beta_{1,2}(z) \gamma_2(z) \gamma_{1,2}(z)}- \no{\beta_1(z)\gamma_{1,2}(z)}  + \no{\beta_3(z) \gamma_{2,3}(z)} - (\kk+1) \partial \gamma_2(z),\\
    f_3(z) \mapsto& -\no{\alpha_3(z) \gamma_3(z)} - \no{\beta_3(z)\gamma_3(z) \gamma_3(z)}+ \no{\beta_2(z) \gamma_2(z) \gamma_3(z)} \\
        &\hspace{3em} +\no{\beta_{1,2}(z) \gamma_3(z) \gamma_{1,2}(z)}- \no{\beta_{2,3}(z) \gamma_3(z) \gamma_{2,3}(z)}-\no{\beta_{1,3}(z) \gamma_3(z) \gamma_{1,3}(z)}  \\
        &\hspace{5em} - \no{\beta_2(z) \gamma_{2,3}(z)}- \no{\beta_{1,2}(z) \gamma_{1,3}(z)}  - \kk\ \partial \gamma_3(z).
\end{aligned}
\end{equation}
One can also check directly that the fields above are in the kernel of the screening operators $\int S_i(z) \dd z$ for $i=1,2,3$ (see \eqref{eq:WakScreenersExp}). Bosonising $\bgvoa_\theta = \bgvoa_{1,3}$ defines an embedding $\uaffvoa{\kk}{\slfour} \hookrightarrow \uaffvoa{\kk+4}{\mathfrak{h}}\otimes \Pi \otimes \bigotimes_{\alpha \in \proots \setminus \theta} \bgvoa_\alpha$. Rewriting the fields in terms of
the tilded fields using the definition of $\tilde{\alpha}_i(z)$, \eqref{eq:tildeHalfLat} and \eqref{eq:tildeGhosts}, we obtain

\begin{align*}
    e_1(z) \mapsto&\ \no{\tilde{\gamma}_{2,3}(z) \ee^{\tilde{c}}(z)}, \quad 
    e_2(z) \mapsto \tilde{\beta}_2(z) + \no{\tilde{\beta}_{2,3}(z) \tilde{\gamma}_3(z)} \quad
    e_3(z) \mapsto \tilde{\beta}_3(z),\\
    h_1(z) \mapsto&\ \frac{3}{4}\tilde{\alpha}_1(z) - \frac{1}{2}\tilde{\alpha}_2(z) - \frac{3}{4}\tilde{\alpha}_3(z) + \frac{1}{2}\tilde{d}(z) - \frac{1}{8}(3\kk+16)\tilde{c}(z) \\
        &\hspace{3em} + 2 \no{\tilde{\beta}_{1}(z)\tilde{\gamma}_{1}(z)} + \no{\tilde{\beta}_{1,2}(z)\tilde{\gamma}_{1,2}(z)} - \no{\tilde{\beta}_{2}(z)\tilde{\gamma}_{2}(z)} - \no{\tilde{\beta}_{2,3}(z)\tilde{\gamma}_{2,3}(z)}, \\
    h_2(z) \mapsto&\ \tilde{\alpha}_2(z) - \no{\tilde{\beta}_{1}(z)\tilde{\gamma}_{1}(z)} + \no{\tilde{\beta}_{1,2}(z)\tilde{\gamma}_{1,2}(z)} + 2 \no{\tilde{\beta}_{2}(z)\tilde{\gamma}_{2}(z)} + \no{\tilde{\beta}_{2,3}(z)\tilde{\gamma}_{2,3}(z)} - \no{\tilde{\beta}_{3}(z)\tilde{\gamma}_{3}(z)},\\
    h_3(z) \mapsto&\ -\frac{1}{4}\tilde{\alpha}_1(z) - \frac{1}{2}\tilde{\alpha}_2(z) + \frac{1}{4}\tilde{\alpha}_3(z) + \frac{1}{2}\tilde{d}(z) + \frac{1}{8}(5\kk+16)\tilde{c}(z) \\
        &\hspace{3em} - \no{\tilde{\beta}_{1,2}(z)\tilde{\gamma}_{1,2}(z)} - \no{\tilde{\beta}_{2}(z)\tilde{\gamma}_{2}(z)} + \no{\tilde{\beta}_{2,3}(z)\tilde{\gamma}_{2,3}(z)}+2\no{\tilde{\beta}_{3}(z)\tilde{\gamma}_{3}(z)},
\end{align*}
\newpage
\begin{align*}
    f_1(z) \mapsto&\ -\no{\tilde{\beta}_{1}(z)\tilde{\gamma}_{1}(z)\tilde{\gamma}_{1}(z)} + 2\no{\tilde{\beta}_{1}(z)\tilde{\beta}_{2,3}(z)\tilde{\gamma}_{1}(z) \ee^{-\tilde{c}}(z)}+\no{\tilde{\beta}_{1,2}(z)\tilde{\beta}_{2,3}(z)\tilde{\gamma}_{1,2}(z) \ee^{-\tilde{c}}(z)} + \no{\tilde{\beta}_2(z) \tilde{\gamma}_{1,2}(z)} \\
        &\hspace{3em} - \no{\tilde{\beta}_2(z) \tilde{\beta}_3(z) \ee^{-\tilde{c}}(z)} - \no{\tilde{\beta}_2(z) \tilde{\beta}_{2,3}(z) \tilde{\gamma}_2(z) \ee^{-\tilde{c}}(z)}  - \frac{1}{8}(11\kk+16)\no{\tilde{\beta}_{2,3}(z) \tilde{c}(z) \ee^{-\tilde{c}}(z)}\\
        &\hspace{5em}+\frac{1}{2}\no{\tilde{\beta}_{2,3}(z) \tilde{d}(z) \ee^{-\tilde{c}}(z)}+\frac{3}{4}\no{\tilde{\beta}_{2,3}(z) \tilde{\alpha}_1(z) \ee^{-\tilde{c}}(z)} - \frac{1}{2}\no{\tilde{\beta}_{2,3}(z) \tilde{\alpha}_2(z) \ee^{-\tilde{c}}(z)} \\
        &\hspace{6em} - \frac{3}{4} \no{\tilde{\beta}_{2,3}(z) \tilde{\alpha}_3(z) \ee^{-\tilde{c}}(z)} - \no{\tilde{\gamma}_1(z) \tilde{\alpha}_1(z)} + (\kk+1) \no{\partial \tilde{\beta}_{2,3}(z) \ee^{-\tilde{c}}(z)} - (\kk+2) \partial \tilde{\gamma}_1(z),\\ 
    f_2(z) \mapsto&\ -\no{\tilde{\beta}_{1}(z)\tilde{\gamma}_{1,2}(z)} + \no{\tilde{\beta}_1(z) \tilde{\gamma}_1(z) \tilde{\gamma}_2(z)} - \no{\tilde{\beta}_2(z) \tilde{\gamma}_2(z) \tilde{\gamma}_2(z)}\\
        &\hspace{3em} + \no{\tilde{\beta}_{3}(z)\tilde{\gamma}_{2,3}(z)} - \no{\tilde{\gamma}_2(z) \tilde{\alpha}_2(z)} -\no{\tilde{\beta}_{1,2}(z) \tilde{\gamma}_2(z) \tilde{\gamma}_{1,2}(z)}-(\kk+1) \partial \tilde{\gamma}_2(z),\\
    f_3(z) \mapsto&\ \no{\tilde{\beta}_1(z) \tilde{\beta}_2(z) \ee^{-\tilde{c}}(z)} - \no{\tilde{\beta}_{1,2}(z) \tilde{\beta}_{1,2}(z) \tilde{\gamma}_{1,2}(z) \ee^{-\tilde{c}}(z)} + \no{\tilde{\beta}_{1,2}(z) \tilde{\alpha}_3(z) \ee^{-\tilde{c}}(z)} - \no{\tilde{\beta}_2(z) \tilde{\gamma}_{2,3}(z)} \\
        &\hspace{3em} + \no{\tilde{\beta}_{2}(z) \tilde{\gamma}_{2}(z) \tilde{\gamma}_{3}(z)} - \no{\tilde{\beta}_{3}(z) \tilde{\gamma}_{3}(z) \tilde{\gamma}_{3}(z)} + \frac{1}{4} \no{\tilde{\gamma}_3(z) \tilde{\alpha}_1(z)} + \frac{1}{2} \no{\tilde{\gamma}_3(z) \tilde{\alpha}_2(z)} \\
        &\hspace{5em}  - \frac{1}{4} \no{\tilde{\gamma}_3(z) \tilde{\alpha}_3(z)}- \frac{1}{8}(5\kk+16) \no{\tilde{\gamma}_3(z) \tilde{c}(z)} - \frac{1}{2}\no{\tilde{\gamma}_3(z) \tilde{d}(z)} + \no{\tilde{\beta}_{1,2}(z)\tilde{\gamma}_3(z)\tilde{\gamma}_{1,2}(z)} \\
        &\hspace{6em} - \no{\tilde{\beta}_{2,3}(z)\tilde{\gamma}_3(z)\tilde{\gamma}_{2,3}(z)} + (\kk+2) \no{\partial \tilde{\beta}_{1,2}(z) \ee^{-\tilde{c}}(z)} + \kk \partial \tilde{\gamma}_3(z).
\end{align*}

We know that the fields of $\widetilde{\uaffvoa{\kk+4}{\mathfrak{h}}} \otimes \widetilde{\bgvoa_{1,1}} \otimes \widetilde{\bgvoa_{1,2}} \otimes \widetilde{\bgvoa_{2,2}}$ appearing above can be identified with fields in $\uQHR{\kk}{\slfour}{\fmin}$ via its Wakimoto realisation (and the isomorphisms \eqref{eq:hookffisos}). For example, by grouping the terms in the image of $f_3(z)$, we obtain
\begin{align} \label{eq:sl4mintoaffF3}
    f_3(z) \mapsto&\ \no{\left(\tilde{\beta}_1(z) \tilde{\beta}_2(z) - \tilde{\beta}_{1,2}(z) \tilde{\beta}_{1,2}(z) \tilde{\gamma}_{1,2}(z)  + \tilde{\beta}_{1,2}(z) \tilde{\alpha}_3(z) + (\kk+2)\partial \tilde{\beta}_{1,2}(z)\right) \ee^{-\tilde{c}}(z)} - \no{\left(\tilde{\beta}_2(z)\right) \tilde{\gamma}_{2,3}(z)} \\
    &\hspace{2em} +  \no{\left(\tilde{\beta}_{2}(z)\tilde{\gamma}_{2}(z)+\tilde{\beta}_{1,2}(z)\tilde{\gamma}_{1,2}(z) + \frac{1}{4}\tilde{\alpha}_1(z)+ \frac{1}{2}  \tilde{\alpha}_2(z) - \frac{1}{4} \tilde{\alpha}_3(z) \right) \tilde{\gamma}_{3}(z)} \notag\\
    &\hspace{3em}- \no{\tilde{\beta}_{3}(z) \tilde{\gamma}_{3}(z) \tilde{\gamma}_{3}(z)}  - \frac{1}{8}(5\kk+16) \no{\tilde{\gamma}_3(z) \tilde{c}(z)} - \frac{1}{2}\no{\tilde{\gamma}_3(z) \tilde{d}(z)}  - \no{\tilde{\beta}_{2,3}(z)\tilde{\gamma}_3(z)\tilde{\gamma}_{2,3}(z)} + \kk \partial \tilde{\gamma}_3(z). \notag
\end{align}

Meanwhile, the images of $E(z), P^{1,+(z)}, J(z)$ and $H(z)$ in $\uaffvoa{\kk+4}{\mathfrak{h}} \otimes \bgvoa_{1,1} \otimes \bgvoa_{1,2} \otimes \bgvoa_{2,2}$ under the Wakimoto realisation of $\uQHR{\kk}{\slfour}{\fmin}$ defined by the screening operators \eqref{eq:WakAffandMin} are given by
\begin{equation}
\begin{aligned}
    E(z) &\mapsto \beta_2(z), \\
    J(z) &\mapsto \frac{1}{2}\alpha_1(z) - \frac{1}{2}\alpha_3(z) + \no{\beta_1(z)\gamma_1(z)} + \no{\beta_{1,2}(z)\gamma_{1,2}(z)}, \\
    H(z) &\mapsto \alpha_2(z) - \no{\beta_1(z)\gamma_1(z)} + \no{\beta_{1,2}(z)\gamma_{1,2}(z)} + 2 \no{\beta_2(z)\gamma_2(z)}, \\
    P^{1,+}(z) &\mapsto -\beta_1(z) \beta_2(z) + \beta_{1,2}(z) \beta_{1,2}(z) \gamma_{1,2}(z) - \beta_{1,2}(z) \alpha_3(z) - (\kk+2)\partial \beta_{1,2}(z).
\end{aligned}
\end{equation}
Applying the isomorphisms $\bgvoa_{1,1} \cong \widetilde{\bgvoa_{1,1}}, \bgvoa_{1,2} \cong \widetilde{\bgvoa_{1,2}}, \bgvoa_{2,2} \cong \widetilde{\bgvoa_{2}}$ and $\uaffvoa{\kk+4}{\mathfrak{h}} \cong \widetilde{\uaffvoa{\kk+4}{\mathfrak{h}}}$ defined by adding tildes to the fields, we can therefore identify
\begin{equation} \label{eq:tildedWaksl4Min}
\begin{aligned}
    E(z) &\leftrightarrow \tilde{\beta}_2(z), \\
    J(z) &\leftrightarrow \frac{1}{2}\tilde{\alpha}_1(z) - \frac{1}{2}\tilde{\alpha}_3(z) + \no{\tilde{\beta}_1(z)\tilde{\gamma}_1(z)} + \no{\tilde{\beta}_{1,2}(z)\tilde{\gamma}_{1,2}(z)}, \\
    H(z) &\leftrightarrow \tilde{\alpha}_2(z) - \no{\tilde{\beta}_1(z)\tilde{\gamma}_1(z)} + \no{\tilde{\beta}_{1,2}(z)\tilde{\gamma}_{1,2}(z)} + 2 \no{\tilde{\beta}_2(z)\tilde{\gamma}_2(z)}, \\
    P^{1,+}(z) &\leftrightarrow -\tilde{\beta}_1(z) \tilde{\beta}_2(z) + \tilde{\beta}_{1,2}(z) \tilde{\beta}_{1,2}(z) \tilde{\gamma}_{1,2}(z) - \tilde{\beta}_{1,2}(z) \alpha_3(z) - (\kk+2)\partial \tilde{\beta}_{1,2}(z).
\end{aligned}
\end{equation}
The fields from $\widetilde{\uaffvoa{\kk+4}{\mathfrak{h}}} \otimes \widetilde{\bgvoa_{1,1}} \otimes \widetilde{\bgvoa_{1,2}} \otimes \widetilde{\bgvoa_{2,2}}$ appearing in the image of $f_3(z)$ in \eqref{eq:sl4mintoaffF3} can then be written in terms of the $\uQHR{\kk}{\slfour}{\fmin}$ fields using \eqref{eq:tildedWaksl4Min}. We therefore have
\begin{align}
    f_3(z) \mapsto&\ -\no{P^{1,+}(z) \ee^{-\tilde{c}}(z)} - \no{E(z)\tilde{\gamma}_{2,3}(z)} + \frac{1}{2}\no{(J(z)+H(z)) \tilde{\gamma}_3(z)}  \\
        &\hspace{2em} - \no{\tilde{\beta}_{3}(z)\tilde{\gamma}_3(z)\tilde{\gamma}_{3}(z)} - \frac{1}{8}(5\kk+16) \no{\tilde{\gamma}_3(z)\tilde{c}(z)} - \frac{1}{2} \no{\tilde{\gamma}_3(z)\tilde{d}(z)} - \no{\tilde{\beta}_{2,3}(z)\tilde{\gamma}_3(z)\tilde{\gamma}_{2,3}(z)} + \kk \partial\tilde{\gamma}_3(z). \notag
\end{align}

Making similar identifications for the rest of the  $\uaffvoa{\kk}{\slfour}$ fields, we obtain our inverse reduction embedding
\begin{equation} \label{eq:sl4MintoAff}
\begin{aligned}
    e_1(z) \mapsto&\ \no{\gamma_{2,3}(z) \ee^{c}(z)}, \quad 
    e_2(z) \mapsto E(z) + \no{\beta_{2,3}(z)\gamma_3(z)}, \quad
    e_3(z) \mapsto \beta_3(z),\\
    h_1(z) \mapsto&\ \frac{3}{2}J(z) - \frac{1}{2} H(z) + \frac{1}{2}d(z) - \frac{1}{8}(3\kk+16)c(z)- \no{\beta_{2,3}(z)\gamma_{2,3}(z)},\\
    h_2(z) \mapsto&\ H(z) + \no{\beta_{2,3}(z)\gamma_{2,3}(z)} - \no{\beta_{3}(z)\gamma_{3}(z)},\\
    h_3(z) \mapsto&\ -\frac{1}{2} J(z) - \frac{1}{2} H(z) + \frac{1}{2}d(z) + \frac{1}{8}(5\kk+16) c(z) + \no{\beta_{2,3}(z)\gamma_{2,3}(z)} + 2 \no{\beta_{3}(z)\gamma_{3}(z)},\\
    f_1(z) \mapsto&\ P^{1,-}(z) - \no{E(z)\beta_3(z) \ee^{-c}(z)} + \frac{1}{2}\no{\left(3 J(z) - H(z)\right) \beta_{2,3}(z) \ee^{-c}(z)} \\
        &\hspace{3em} - \frac{1}{8} (11 \kk+16) \no{\beta_{2,3}(z) c(z) \ee^{-c}(z)} + \frac{1}{2} \no{\beta_{2,3}(z) d(z) \ee^{-c}(z)} + (\kk + 1) \no{\partial \beta_{2,3}(z) \ee^{-c}(z)},\\ 
    f_2(z) \mapsto&\ F(z) + \no{\beta_3(z) \gamma_{2,3}(z)},\\
    f_3(z) \mapsto&\ -\no{P^{1,+}(z) \ee^{-c}(z)} - \no{E(z)\gamma_{2,3}(z)} + \frac{1}{2}\no{(J(z)+H(z)) \gamma_3(z)}  \\
        &\hspace{3em} - \no{\beta_{3}(z)\gamma_3(z)\gamma_{3}(z)} - \frac{1}{8}(5\kk+16) \no{\gamma_3(z)c(z)} - \frac{1}{2} \no{\gamma_3(z)d(z)} - \no{\beta_{2,3}(z)\gamma_3(z)\gamma_{2,3}(z)} + \kk \partial \gamma_3(z),
\end{aligned}
\end{equation}
where we have used the isomorphisms $\widetilde{\Pi} \cong \Pi$, $\widetilde{\bgvoa_{2,3}} \cong \bgvoa_{2,3}$ and $\widetilde{\bgvoa_{3,3}} \cong \bgvoa_{3,3}$ to remove the remaining tildes. The image of the remaining strong generating fields of $\uaffvoa{\kk}{\slfour}$ can be obtained from the above using \eqref{eq:affOPE}. Observe too that all expressions for embeddings in this section contain no poles in $\kk$, and the image of strong generating fields always contains a $\kk$-independent leading term.



\raggedright
\providecommand{\opp}[2]{\textsf{arXiv:\mbox{#2}/#1}}\providecommand{\pp}[2]{\textsf{arXiv:#1
  [\mbox{#2}]}}

\end{document}